\theoremstyle{plain}
\newtheorem{theorem}{Theorem}
\newtheorem{proposition}[theorem]{Proposition}
\newtheorem{corollary}[theorem]{Corollary}
\newtheorem{lemma}[theorem]{Lemma}
\newtheorem{condition}[theorem]{Condition}
\theoremstyle{definition}
\newtheorem{remark}[theorem]{Remark}
\newtheorem{definition}[theorem]{Definition}
\def\ra{\rightarrow}
\def\ba{\begin{array}}
\def\ea{\end{array}}
\def\bi{\begin{itemize}}
\def\ei{\end{itemize}}
\def\mE{\mathbb{E}}
\def\m1{1}
\definecolor{amber(sae/ece)}{rgb}{1.0, 0.49, 0.0}
\definecolor{amethyst}{rgb}{0.6, 0.4, 0.8}
\definecolor{antiquefuchsia}{rgb}{0.57, 0.36, 0.51}
\definecolor{auburn}{rgb}{0.43, 0.21, 0.1}
\definecolor{cadmiumgreen}{rgb}{0.0, 0.42, 0.24}
\definecolor{byzantium}{rgb}{0.44, 0.16, 0.39}
\definecolor{burntsienna}{rgb}{0.91, 0.45, 0.32}
\begin{document}
\title{Tight bounds on the convergence rate of \\ generalized ratio consensus algorithms}
\date{\today}

\author{Bal\'azs Gerencs\'er\thanks{B. Gerencs\'er is with the Alfr\'ed R\'enyi Institute of Mathematics, Budapest, Hungary and E\"otv\"os Lor\'and University, Department of Probability and Statistics, Budapest, Hungary. {\tt\small gerencser.balazs@renyi.hu}
    He was supported by NKFIH (National Research, Development and Innovation Office) Grants PD 121107 and KH 126505.}, %
  L\'aszl\'o Gerencs\'er\thanks{L. Gerencs\'er is with SZTAKI, Institute for Computer Science and Control, Budapest, Hungary. {\tt\small gerencser.laszlo@sztaki.hu.}}} 

\maketitle

\begin{abstract}
 The problems discussed in this paper are motivated by general ratio consensus algorithms, introduced by Kempe, Dobra, and Gehrke (2003) in a simple form as the push-sum algorithm, later extended by B\'en\'ezit et al.\ (2010) under the name weighted gossip algorithm. We consider a communication protocol described by a strictly stationary, ergodic, sequentially primitive sequence of non-negative matrices, applied iteratively to a pair of fixed initial vectors, the components of which are called values and weights defined at the nodes of a network. The subject of ratio consensus problems is to study the asymptotic properties of ratios of values and weights at each node, expecting convergence to the same limit for all nodes. The main results of the paper provide upper bounds for the rate of the almost sure exponential convergence in terms of the spectral gap associated with the given sequence of random matrices. It will be shown that these upper bounds are sharp. Our results complement previous results of Picci and Taylor (2013) and Iutzeler, Ciblat and Hachem (2013).
\end{abstract}


\section{Introduction}
\label{sec:Introduction}

\subsection{The setup and the ratio consensus algorithm} The problems discussed in this paper are motivated by the study of general ratio consensus algorithms, introduced in \cite{kempe2003gossip} in a simple form as the push-sum algorithm, and later extended in \cite{benezit2010weighted} under the name weighted gossip algorithm for solving a class of distributed computation problems. The algorithm is designed to solve a consensus problem over a network of agents, based on asynchronous communication. The objective of the consensus can be expressed in its simplest way as to achieve the average of certain values given at each node. The original problem formulation and the algorithm has been adapted to model a number of real-life situations such as platooning, sensor networks  or smart grids, see  \cite{dimakis2006geographic}, \cite{hadjicostis2011asynchronous}. 

Various relaxations and extensions of the baseline model were proposed in the literature. A nice application of the push-sum algorithm for computing the eigenvectors of a large symmetric matrix, corresponding to the adjacency matrix of an undirected graph, was given in \cite{kempe2008decentralized}. Another application is distributed convex optimization, see \cite{tsianos2012push}. A general class of solvable consensus problems for distributed function computation was introduced in \cite{hendrickx2011distributed}.

The basic setup for this class of methods is a communication network represented by a directed graph $G=(V,E),$ to each node $i$ of which a pair of real numbers $x^i$ and $w^i\ge 0$ is associated, such that not all of the $w^i$-s are $0$. They are often called the values and the weights. The problem is then to compute the ratio $\sum_i x^i/\sum_i  w^i,$ at all nodes, using only local interactions allowed by $G=(V,E)$ in an asynchronous manner. In the special case when $w^i=1$ for all nodes the problem reduces to the average consensus problem.

A convenient illustration of the above problem is the following: $x^i$ unit of some chemical is dissolved in a solvent of $w^i \ge 0$ units leading to a solution with concentration $x^i/w^i$ at node $i.$ The problem equivalent to the one above is then to compute the concentration of the grand total, defined as $\sum_i x^i/\sum_i w^i,$ using only local transfers allowed by $G=(V,E)$ in an asynchronous manner.

Let $|V|=p$ and let $x_0 = x =(x^1, \ldots, x^p)^\top$ and $w_0 = w =(w^1, \ldots, w^p)^\top$ denote the vectors of initial values and weights, respectively, at time $0,$ assuming $w \ge 0, w \neq 0.$ We update both the values and weights successively as follows. Let $x_{n-1}$ and $w_{n-1}$ denote the $p$-vector of values and weights, respectively, at time $n-1.$ Select a directed edge $f_n=(i,j) \in E$ randomly, representing the communicating pair at time $n$. Then the sender, node $i,$ initiates a transactions by sending a fraction, say $\alpha_{ji}$ with $0 < \alpha_{ji} < 1,$ of his/her values and weights to the receiver, node $j.$ It is initially  assumed that the sequence of edges $(f_n)$ is i.i.d., with the probability of choosing an edge $f=(i,j)$ being denoted by $q_{ij}.$

In the context of the above illustration via elementary chemistry the algorithm is equivalent to mixing a fraction of the solution at node $i$ into the current solution at node $j.$ It is then expected that in the limit we get solutions with identical concentrations at each node.

The above algorithm, when setting $\alpha_{ji}= 1/2$ for all edges, is the celebrated push-sum method. The dynamics of the algorithm can be formally described by the equations 
\begin{equation}
x_n = A_n x_{n-1} \quad {\rm and} \quad w_n = A_n w_{n-1} 
\end{equation}
for $n \ge 1$, where $A_n$ is a $p \times p$ random matrix obtained from the identity matrix by modifying its $i$-th column as follows: 
\begin{equation}
\label{eq:def An PS One Column}
A_n^{ii} = 1 - \alpha_{ji}  \quad A_n^{ji} = \alpha_{ji} \quad  A_n^{ki}=0 \quad {\rm for}~ k \neq i,j.
\end{equation}

The above problem can be modified by allowing packet losses, see [3]. When a packet loss occurs along the edge from $i$ to $j,$ denoted by $(j,i),$ the content of node $j$ is not changed. Packet losses are assumed to occur randomly and independently. The functionality of the network at time $n$ is described by a collection of indicators $\rho_n(f), ~f \in E$: $\rho_n(f)=1$ if the edge $f$ fails at time $n$, otherwise $\rho_n(f)=0.$ The probability of failure along edge $f$ is $0 \le r_{f} <1$ at any time, so that $P (\rho_n(f) = 1)= r_{f}.$ With these notations, assuming $f_n=(j,i),$ the matrix $A_n$ will have the following structure with a single, possibly non-zero off-diagonal element in the positions $(j,i)$: 

\begin{equation}
	\label{eq:def An PS with Loss}
	\left(
	\begin{array}{cccccccc}
	1 & 0 &\cdots&&&   &&  0\\
	0 & 1 &\cdots&&&   &&  0\\
	\vdots & \vdots & \ddots \\
	&&&1- \alpha_{ji}&&0\\
	&&&&\ddots\\
	&&& (1 -\rho_n(f_n)) \alpha_{ji}&&1\\
	&&&&&&\ddots&\vdots\\
	0&&&  &&&\cdots&1\\
	\end{array}
	\right).
\end{equation}

We note in passing that the coordinates of vectors and the elements of matrices will be indicated by superscripts, while their dependence on the discrete time $n$ will be indicated by subscripts.

\subsection{A generalized framework.}The above form of the push-sum or weighted gossip algorithm has a natural extension reflecting the possibility of certain schedules  in choosing the sequence of interacting pairs of agents, as in the case of geographic gossip, randomized path averaging or one-way averaging, \cite{dimakis2006geographic, benezit2010order, benezit2010weighted}. 

\medskip
In addition, we may consider a significantly broader class of matrices, allowing much more complex network dynamics. Technically speaking, we consider a strictly stationary, ergodic sequence of $p \times p$ random matrices with non-negative entries $A_1, A_2,\ldots.$ Let $x, w \in \mathbb R^p$ denote a pair of initial vectors, such that $w \ge 0, w \neq 0$. Our objective is to study the asymptotic properties of the ratios 
\begin{equation}
e_i^\top A_n A_{n-1} \cdots A_1 x/ e_i^\top A_n A_{n-1} \cdots A_1 w, \quad i=1,...,p
\end{equation}
where $e_i$ is the unit vector with a single $1$ in its $i$-th coordinate. 

For a start we provide a brief summary of two classical results on products of strictly stationary, ergodic sequences of random matrices, and recapitulate and extend a relevant application as Theorem \ref{thm:barMnx vs barMnw}.
The key results of this paper are stated as Theorems \ref{thm:ei Mnx vs ei Mnw IID}, \ref{thm:ei Mnx vs ei Mnw STATIONARY BOUNDED}, \ref{thm:ei Mnx vs ei Mnw STATIONARY GEN} ~and ~\ref{thm:ei Mnx vs ei Mnw COLUMN STOCH}, extending previous results on the almost sure exponential convergence in the context of ratio consensus such as given in  \cite{iutzeler2013analysis} and \cite{gerencser2018push}, in particular providing upper bounds for the almost sure exponential convergence rate in terms of spectral gaps associated with stationary sequences of matrices. It will be shown that these upper bounds are sharp in Theorem \ref{thm:max i of ratios via barMnx vs barMnw}, thus solving an open problem formulated in the conclusion of one of the fundamental papers \cite{benezit2010weighted} under very general conditions, quoting from their Conclusion:

\medskip
"The next step of this work is to compute analytically the speed of convergence of Weighted Gossip. In classical Gossip, double stochasticity would greatly simplify derivations, but this feature disappears in Weighted Gossip, which makes the problem more difficult."

\medskip
The proofs are based on the careful analysis of random products $M_n =  A_n A_{n-1} \cdots A_1$ for random sequence of non-negative matrices using Oseledec's theorem. 
The application of results in the theory of products of random matrices in the context of consensus algorithms was previously initiated and elaborated in \cite{picci2013almost} for the case of linear gossip algorithms with pairwise, bidirectional, symmetric communication.  
While we rely partially on the same mathematical methodology, the range of communication protocols that we consider is significantly broader, in particular we consider weighted gossip algorithms.

Our work complements and extends the result of \cite{iutzeler2013analysis} in which an upper bound for the rate (or the exponent) of almost sure exponential convergence of a (sampled) weighted gossip algorithms was derived.

The paper is organized as follows: Sections 
\ref{sec:Technical preliminaries} -- \ref{sec:Push Sum} are devoted to  the description of the subject matter and the main results of the paper with minimal technical details: starting with two sections presenting a few preliminary technicalities, a section on normalized products, a section with the statements and interpretations of the main results, followed by a brief section on push-sum algorithms. In the last two sections of the main body of the paper we elaborate on the major mathematical details: in Section \ref{sec:Proofs of Main Theorems} we describe the  essential fabric of the proofs of the main theorems,  while in Section \ref{sec:Gap vs Birkhoff} an interlude on the connection between spectral gap and Birkhoff's contraction coefficient is added.  A brief discussion and conclusion wraps up the material of the main body of the paper. Relevant, but minor technical details will be given in the Appendices. Altogether we intend to give a self-contained presentation of the subject matter and of the background material.

\section{Technical preliminaries}
\label{sec:Technical preliminaries}

For the formulation of our results we recall two basic facts on the product of random matrices.

\begin{proposition} [F\"urstenberg and Kesten's theorem, \cite{furstenberg1960products}]
\label{prop:FK}
Let $A_1, A_2,\ldots,$ be a strictly stationary, ergodic process of $p \times p$ random matrices over a complete probability space $(\Omega, \cal F, P)$ such that $\mE \log^+ \Vert A_1 \Vert < \infty.$ Then the almost sure limit 

\begin{equation}
\label{eq:def top Lyap exp via E}
\lambda_1 = \lim_{n \rightarrow \infty} {\frac 1 n} \log \Vert A_n A_{n-1} \cdots A_1 \Vert < \infty
\end{equation} 
exists and it is equal to  
\begin{align}
\label{eq:def top Lyap exp via E}
\lim_{n \rightarrow \infty} &{\frac 1 n} \mE \log \Vert A_n A_{n-1} \cdots A_1 \Vert \nonumber \\
= \inf_n &{\frac 1 n} \mE \log \Vert A_n A_{n-1} \cdots A_1 \Vert.
\end{align}
Note that we may have $\lambda_1 = - \infty.$ 
\end{proposition}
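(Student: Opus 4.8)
\emph{Proof idea.} The cleanest route is via Kingman's subadditive ergodic theorem. I would write $M_n = A_n A_{n-1}\cdots A_1$ and, for integers $0 \le m < n$, set
\[
X_{m,n} := \log \Vert A_n A_{n-1} \cdots A_{m+1} \Vert,
\]
so that $X_{0,n} = \log\Vert M_n\Vert$. The plan is: (i) verify that the array $(X_{m,n})$ satisfies the hypotheses of Kingman's theorem; (ii) invoke that theorem to obtain almost sure convergence of $\frac1n X_{0,n}$ to a shift-invariant limit; (iii) use ergodicity to upgrade this limit to a deterministic constant; and (iv) extract the $\inf$/$\lim$ characterization by applying Fekete's lemma to the sequence of expectations.

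For step (i), submultiplicativity of the operator norm gives, for $m < k < n$, the bound $\Vert A_n\cdots A_{m+1}\Vert \le \Vert A_n\cdots A_{k+1}\Vert\,\Vert A_k\cdots A_{m+1}\Vert$, hence $X_{m,n} \le X_{k,n} + X_{m,k}$, which is exactly the subadditivity condition. Strict stationarity of $(A_i)$ implies that the joint distribution of the shifted array $(X_{m+r,n+r})_{m<n}$ is independent of $r \ge 0$, since each entry is the same fixed measurable function of the corresponding block of matrices. For the required integrability I would note that $X_{0,1} = \log\Vert A_1\Vert$, so $\mE X_{0,1}^+ = \mE\log^+\Vert A_1\Vert < \infty$ by hypothesis; more generally $X_{0,n}^+ \le \sum_{i=1}^n \log^+\Vert A_i\Vert$ yields $\mE X_{0,n}^+ \le n\,\mE\log^+\Vert A_1\Vert < \infty$ for every $n$.

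With step (i) in place, Kingman's theorem gives that $\frac1n X_{0,n}$ converges almost surely (and in $L^1$ when the limit is finite) to a shift-invariant random variable with values in $[-\infty,\infty)$, whose expectation equals $\inf_n \frac1n \mE X_{0,n}$; ergodicity then forces this limit to be an almost sure constant $\lambda_1$. To finish, I would observe that $a_n := \mE X_{0,n}$ is a subadditive sequence, $a_{m+n} \le a_m + \mE X_{m,m+n} = a_m + a_n$, using stationarity in the last equality, so Fekete's lemma gives $\frac1n a_n \to \inf_n \frac1n a_n$, and combining with the previous sentence establishes the displayed identities, with the value $-\infty$ occurring precisely when $a_n \to -\infty$.

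I expect the only delicate points to be bookkeeping: ensuring that the version of Kingman's theorem used permits a $-\infty$ limit under the one-sided integrability condition $\mE X_{0,1}^+ < \infty$ (the standard modern formulations do), and noting that the one-sided index set $i\ge 1$ is harmless, since $(A_i)$ may be extended to a two-sided strictly stationary ergodic process without affecting the conclusion. A fully self-contained alternative is the original F\"urstenberg--Kesten argument, but the subadditive ergodic theorem packages everything most economically.
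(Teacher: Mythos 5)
Your proof is correct. The paper states this proposition as a quoted classical result of F\"urstenberg and Kesten and gives no proof of its own, so there is no in-paper argument to compare against; your derivation via Kingman's subadditive ergodic theorem is the standard modern route and every ingredient checks out: submultiplicativity of the operator norm gives subadditivity of $X_{m,n}$, strict stationarity of $(A_i)$ transfers to the array, the hypothesis $\mE\log^+\Vert A_1\Vert<\infty$ is exactly the required one-sided integrability, ergodicity collapses the shift-invariant limit to a constant, and Fekete's lemma yields the $\inf$/$\lim$ identity for the expectations. This is also consistent with the paper's own toolkit, since it later invokes Kingman's theorem in the same way for the subadditive array $\log\tau(M_{m,k})$. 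The two caveats you flag are genuine but harmless: the possibly $-\infty$ limit under one-sided integrability is covered by the standard formulations (or by truncating $X_{m,n}$ below at $-(n-m)K$ and letting $K\to\infty$), and the one-sided index set poses no obstacle since Kingman's theorem is usually stated for one-sided stationary arrays anyway.
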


A more refined asymptotic characterization of $ A_n A_{n-1} \cdots A_1$ is given by Oseledec's theorem. To appreciate the novelty and power of this theorem we make a brief elementary detour in the field of Lyapunov exponents, see \cite{jungers2009joint}. Let $(A_n), n \ge 1$ be a fixed sequence of $p \times p$ matrices. For any $x \in \mathbb R^p$ define the Lyapunov exponent of $x$ with respect to (w.r.t.) $(A_n)$ as
$$
\lambda(x) := \limsup_{n \rightarrow \infty} {\frac 1 n} \log \vert A_n A_{n-1} \cdots A_1 x \vert.
$$
Next, for any extended real number $-\infty \le \mu \le +\infty$ define the set
	\begin{equation}
	\label{eq:def L mu}
	L_{\mu} = \{x \in \mathbb R^p: \lambda(x) \le \mu \}.	
	\end{equation}
It is easily seen that $L_{\mu}$ is a linear subspace of $\mathbb R^p$ and for $\mu < \mu'$ we have $L_{\mu}\subseteq L_{\mu'}.$ It is also readily seen that $L_{\mu}$ is continuous  from the right: if $x \in L_{\mu_j}$ for a sequence of $\mu_j$-s such that $\mu_j$ tend to $\mu$ from above, then we have also $x \in L_{\mu}.$ Since there can be only a finite number of strictly descending subspaces it follows that there is a finite number of possible values of the Lyapunov exponents, $ +\infty \ge \mu_1 > \mu_2 > \ldots > \mu_q \ge -\infty,$ such that 
\begin{equation}
\label{eq:def L mu r}
\mathbb R^p = L_{\mu_1} \supsetneq L_{\mu_2} \ldots \supsetneq   L_{\mu_q} \supsetneq \{0\} =: L_{\mu_{q+1}},
\end{equation}
where $ L_{\mu}$ is a piecewise constant function of $\mu$ with points of discontinuity exactly at $\mu_i.$ Thus for $\mu_{r-1}> \mu \ge \mu_{r}$ we have $L_{\mu} = L_{\mu_r}$ for $2 \le r \le q$ and for $\mu_{q}> \mu$ we have $L_{\mu} = \{0\}.$ It follows that for $1 \le r \le q$ 
\begin{equation}
\label{eq: x in setminus L mu} 
x \in  L_{\mu_r}  \setminus L_{\mu_{r+1}} \quad {\rm implies} \quad \lambda(x)= \mu_r.
\end{equation}
Let the dimension of $L_{\mu_r}$ be denoted by $i_r$, with $1 \le r \le q+1$ (with $i_{q+1}=0$). Then the co-dimension of $L_{\mu_r}$ relative to $ L_{\mu_{r+1}}$ is $i_r - i_{r+1}$, which can be interpreted as the multiplicity of the Lyapunov exponent $\mu_r.$ Accordingly, we define the full spectrum of Lyapunov exponents $\lambda_1 \ge \lambda_2 \ge \ldots \ge \lambda_p,$ allowing the values $\pm \infty,$ is obtained by setting for $1 \le i \le p$ 
\begin{equation}
\label{eq:lambda i eq mu r} 
\lambda_i = \mu_r \quad {\rm if} \quad i_r \ge i > i_{r+1}.
\end{equation}

If $(A_n)=(A_n(\omega))$ is the realization of a strictly stationary ergodic process then the above observations can be extended to the following fascinating result, stated first in \cite{oseledec1968multiplicative}, and restated and proved under weaker condition in \cite{raghunathan-oseledec}: 
	
\begin{proposition} [Oseledec's theorem] Assume that $(A_n)$ is a strictly stationary ergodic process of $p \times p$ matrices such that $\mE \log \Vert A_1 \Vert^+ < \infty.$ Then there exists a subset $\Omega' \subset \Omega$ with $P(\Omega')=1$ such that for all $\omega \in \Omega'$ and for any $x \in \mathbb R^p$ the limit below exists:
\begin{equation}
\lambda(x) =\lim_{n \rightarrow \infty} {\frac 1 n} \log \vert A_n A_{n-1} \cdots A_1 x\vert. 
\end{equation}	
Moreover the Lyapunov exponents $\lambda_1 \ge \lambda_2 \ge \ldots \ge \lambda_p,$ possibly taking the value $- \infty,$ do not depend on $\omega \in \Omega'.$ Accordingly, $\mu_r$ and $i_r$ for $1 \le r \le q$ do not depend on $\omega \in \Omega'$ either. The mapping $\omega \mapsto   L_{\mu_r}(\omega)$ is measurable from $\Omega$ to the Grassmanian manifold of linear subspaces of dimension $i_r.$ 
In addition, the following almost sure limit exists:
\begin{equation}
\label{eq:Oseledec Lim MnT Mn Root} 
M^* = \lim \left( M_n^T M_n \right)^{1/2n}.
\end{equation} 
\end{proposition}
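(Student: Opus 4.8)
The plan is to reduce the $p$-dimensional statement to a family of one-dimensional statements about operator norms of exterior powers, apply Proposition~\ref{prop:FK}, and then upgrade convergence of singular \emph{values} to convergence of singular \emph{subspaces}.

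First I would pass to exterior powers. For $1\le k\le p$ the sequence $(\wedge^k A_n)$ of operators on $\wedge^k\mathbb R^p$ is again strictly stationary and ergodic (a fixed measurable image of $(A_n)$), we have $\wedge^k M_n=\wedge^k A_n\cdots\wedge^k A_1$, and $\log^+\Vert\wedge^k A_1\Vert\le k\log^+\Vert A_1\Vert$ is integrable, so Proposition~\ref{prop:FK} yields a deterministic $\Lambda_k\in[-\infty,\infty)$ with $\tfrac1n\log\Vert\wedge^k M_n\Vert\to\Lambda_k$ a.s.; set $\Lambda_0:=0$. Using the identity $\Vert\wedge^k M\Vert=\sigma_1(M)\cdots\sigma_k(M)$, where $\sigma_1(M)\ge\cdots\ge\sigma_p(M)\ge0$ are the singular values of $M$ (the eigenvalues of $(M^TM)^{1/2}$), on the intersection of the $p$ full-measure events we obtain $\tfrac1n\log\sigma_k(M_n)\to\lambda_k:=\Lambda_k-\Lambda_{k-1}$ for every $k$ (with the obvious reading when some $\Lambda_k=-\infty$), and $\lambda_k\ge\lambda_{k+1}$ since $\sigma_k(M_n)\ge\sigma_{k+1}(M_n)$ for every $n$. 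Collapsing equal values gives the distinct exponents $\mu_1>\cdots>\mu_q$ with multiplicities summing to $p$ and the flag $\mathbb R^p=L_{\mu_1}\supsetneq\cdots\supsetneq L_{\mu_q}\supsetneq\{0\}$; all of $\lambda_k,\mu_r,i_r$ are deterministic because the limit furnished by Proposition~\ref{prop:FK} is, by ergodicity, a constant.

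The core is the convergence of $N_n:=(M_n^TM_n)^{1/2n}$. Writing a singular value decomposition $M_n=U_nD_nV_n^T$ with $D_n=\mathrm{diag}(\sigma_1(M_n),\dots,\sigma_p(M_n))$, we have $N_n=V_nD_n^{1/n}V_n^T$, and its eigenvalues $\sigma_k(M_n)^{1/n}$ converge to $e^{\lambda_k}$, clustering into $q$ groups with distinct limits $e^{\mu_1}>\cdots>e^{\mu_q}$. By elementary perturbation theory for symmetric matrices it then suffices to show that, for each $r$, the orthogonal projection onto the span $F_n^{(r)}$ of the right singular vectors of $M_n$ belonging to the top $d_r:=\#\{k:\lambda_k\ge\mu_r\}$ singular values converges a.s.\ in the Grassmannian. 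The lever is the \emph{exponential} gap $\sigma_{d_r}(M_n)/\sigma_{d_r+1}(M_n)\ge e^{n(\mu_r-\mu_{r+1})/2}$ for all large $n$: realizing the passage from $M_n$ to $M_{n+1}=A_{n+1}M_n$ as an exponential contraction toward $F_n^{(r)}$ on a suitable Grassmannian chart, this gap yields $\mathrm{dist}(F_{n+1}^{(r)},F_n^{(r)})\le C\theta^n$ a.s.\ for some $\theta<1$ once $n$ is large, whence the limits $F_\infty^{(r)}(\omega)$ exist and $N_n\to M^*$, the symmetric positive semidefinite matrix carrying eigenvalue $e^{\mu_r}$ on the limiting block $F_\infty^{(r)}\cap(F_\infty^{(r-1)})^{\perp}$. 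This contraction estimate — essentially the heart of \cite{oseledec1968multiplicative}, streamlined in \cite{raghunathan-oseledec} — together with making the constants $C,\theta$ uniform in $n$ out of the subadditive convergence, is the step I expect to be the \emph{main obstacle}; the rest is bookkeeping.

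Finally, once $M^*$ exists the remaining claims follow. For the pointwise limit I would use the scalar fact that if $B_n$ are positive semidefinite with $B_n^{1/2n}\to C$, then $\tfrac1n\log|B_n^{1/2}x|\to\log$ of the largest eigenvalue of $C$ whose eigenspace meets $x$ nontrivially; applied with $B_n=M_n^TM_n$ and $|M_nx|=|B_n^{1/2}x|$ this gives $\lambda(x)=\lim\tfrac1n\log|M_nx|$ for every fixed $x$ on a single full-measure set, and simultaneously identifies $L_{\mu_r}(\omega)$ with the sum of the eigenspaces of $M^*(\omega)$ for eigenvalues $\le e^{\mu_r}$. Measurability of $\omega\mapsto L_{\mu_r}(\omega)$ into the Grassmannian of $i_r$-planes then follows since $M^*$ is an a.s.\ limit of measurable matrix-valued maps and the spectral projection of a symmetric matrix onto a spectral band separated from the rest is a continuous function of that matrix, while $\mu_r$ and $i_r$ are already known to be deterministic. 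The degenerate cases ($\mu_r=-\infty$, i.e.\ superexponentially decaying or vanishing singular values, and $\Lambda_k=-\infty$) are disposed of by running the same argument on the strictly positive part of the spectrum, and I would collect them in a short remark.
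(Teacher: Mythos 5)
The paper offers no proof of this proposition: it is quoted as a classical result, with the proof attributed to \cite{oseledec1968multiplicative} and, in the form actually used here, to \cite{raghunathan-oseledec}. Your sketch follows precisely Raghunathan's route (exterior powers plus Proposition~\ref{prop:FK} to produce the spectrum, then convergence of the singular subspaces), so the architecture is the right one; but as a proof it has a genuine gap, because the one step you defer --- the contraction estimate $\mathrm{dist}(F_{n+1}^{(r)},F_n^{(r)})\le C\theta^n$ --- \emph{is} the theorem. Everything before it is soft (F\"urstenberg--Kesten applied to $\wedge^k A_n$, which indeed gives the deterministic $\lambda_k$ and the flag), and everything after it depends on that estimate quantitatively, not just qualitatively. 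Establishing it requires combining the singular-value gap $e^{(\mu_r-\mu_{r+1}+o(1))n}$ with the almost sure subexponential bound $\log^+\Vert A_{n+1}\Vert=o(n)$ that follows from stationarity and $\mE\log^+\Vert A_1\Vert<\infty$; this is Lemma 5 of \cite{raghunathan-oseledec} and cannot be dispatched as ``elementary perturbation theory,'' least of all with constants $C,\theta$ that you acknowledge you do not yet control.

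Separately, the ``scalar fact'' in your final paragraph is false as stated, so the reduction of the pointwise limit $\lambda(x)$ to the existence of $M^*$ does not work without the rate. Take $p=2$, $B_n=R_nD_nR_n^\top$ with $D_n=\mathrm{diag}(e^{2n},1)$ and $R_n$ the rotation by angle $e^{-n/2}$. Then $B_n^{1/2n}=R_n\,\mathrm{diag}(e,1)\,R_n^\top\to\mathrm{diag}(e,1)=:C$, and $x=e_2$ has no component in the $e$-eigenspace of $C$, so your rule predicts $\tfrac1n\log\vert B_n^{1/2}e_2\vert\to 0$; but $e_2^\top B_ne_2=e^{2n}\sin^2(e^{-n/2})+\cos^2(e^{-n/2})\sim e^{n}$, so the true limit is $1/2$. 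Convergence of $B_n^{1/2n}$ alone does not determine $\lim\tfrac1n\log\vert B_n^{1/2}x\vert$; one needs the spectral projections of $B_n$ to converge at an exponential rate beating the relevant eigenvalue gaps --- which is again exactly the deferred lemma. So both the existence of $M^*$ and the statement that $\lambda(x)$ exists for \emph{every} $x$ on a single full-measure set hinge on the one estimate your proposal does not supply.
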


From the proof given in \cite{raghunathan-oseledec} it follows that taking a singular value decomposition of $M_n := A_n A_{n-1} \cdots A_1$  
\begin{equation}
\label{eq:Mn SVD}
M_n = U_n \Sigma_n V_n,
\end{equation}
where $U_n, V_n$ are orthonormal matrices, and $\Sigma_n$ is diagonal
with entries $\sigma^{1}_{n} \ge \sigma^{2}_{n} ...\ge \sigma^{p}_{n} \ge 0,$ we have 
\begin{equation}
\lambda_k = \lim_{n \rightarrow \infty} {\frac 1 n} \log \sigma^{k}_{n} \quad {\rm a.s.}\quad k=1,...,p. 
\end{equation}
Therefore we have, with $o(1)$ denoting a sequence of random variables tending to $0$ a.s. (almost surely) as $n$ tends to $\infty,$

\begin{equation}
\label{eq:Mn SVD Sigma n}
\Sigma_n = {\rm diag} (e^{(\lambda_k + o(1))n}).
\end{equation}

Surprisingly, the orthonormal matrices $V_n$ will also converge a.s. in a restricted sense. Allowing the possibility of multiplicity of Lyapunov-exponents consider a fixed $\mu_r$ and define
$I_r= \{i: \lambda_i = \mu_r \},$ and let $SV_{n}^{I_r \cdot}$ denote the subspace spanned by the rows of $V_n$ with indices in $I_r.$ Then we have a.s. $\lim ~SV_{n}^{I_r \cdot} = SV^{I_r \cdot}$ for some random subspace $SV^{I_r \cdot}.$ We note in passing that this technical result immediately implies the existence of the a.s. limit in \eqref{eq:Oseledec Lim MnT Mn Root}.

In particular, if $\lambda_1 > \lambda_2,$ then for the first row of $V_n,$ denoted by $v_n^{1 \cdot}$ we have 
\begin{equation}
\lim v_n^{1 \cdot} = v^{1 \cdot}  
\end{equation}
a.s., for some random $v^{1 \cdot}.$ In fact, Ragunathan proved in Lemma 5 of \cite{raghunathan-oseledec} that for any $\varepsilon > 0$ 

\begin{equation}
\label{eq:V1dot_n vs V1dot}
v_n^{1 \cdot} - v^{1 \cdot} = O(e^{-(\lambda_1 - \lambda_2 + o(1))n})  \quad {\rm a.s.}
\end{equation}
Writing 
\begin{equation}
M_n =  u_{n}^{\cdot 1} v^{1 \cdot} \sigma^{1}_{n} + \sum_{k=2}^p  u_{n}^{\cdot k} v^{k \cdot} \sigma^{k}_{n},
\end{equation}
it follows by straightforward calculations that 
\begin{equation}
\label{eq:Mn SVD Rank 1}
M_n = u_{n}^{\cdot 1} v^{1 \cdot} \sigma^{1}_{n} + O(e^{(\lambda_2 + o(1))n}).  
\end{equation}
A rank-$1$ approximation for the product of an strictly stationary, ergodic sequence of column stochastic matrices has been derived in Theorem 3, \cite{tahbaz2010consensus} using different techniques. A deterministic alternative, with exponential rate of convergence, is implied by Proposition 1,  \cite{liu2012asynchronous}.

A nice corollary of Oseledec's theorem, obtained by a straightforward application of Fubini's theorem, is that for all $x \in \mathbb R^p,$ except for a set of Lebesgue-measure zero, we have  
\begin{equation}
\label{eq:lim An A1 x}
\lambda_1 = \lim {\frac 1 n} \log \vert A_n A_{n-1} \cdots A_1 x\vert \quad {\rm a.s.}
\end{equation}

In the special case when $A_n=A$ for all $n,$ arranging the eigenvalues of $A,$ say $\nu_i,$ according to their absolute values in  non-increasing order, we have $\lambda_i = \log | \nu_i |.$

\section{Sequentially primitive non-negative matrix processes}

In the next section we present the extension of a result of \cite{atar1997lyapunov} on the asymptotic behavior of normalized products  
\begin{equation}
  A_n A_{n-1} \cdots A_1 x/ \mathbf 1^T A_n A_{n-1} \cdots A_1 x, 
\end{equation} 
where $\mathbf 1$ is a $p$-vector all coordinates of which are $1$. 
For the generalization of Theorem 1 of \cite{atar1997lyapunov} the extension of the notion of primitivity for a class of matrices and stochastic processes will be needed. For a nice introduction and motivation on this topic see \cite{protasov2012sets}. 
  
Let ${\cal A} = \{ A_1, \cdots ,A_m \}$ be a finite family of $p \times p$ matrices with non-negative entries. We may then ask if there is a product of these matrices (with repetitions permitted) which is strictly positive? The following definition is essentially  given in \cite{protasov2012sets}:

\begin{definition}
A family $\cal A$ of nonnegative $p \times p$-matrices is called primitive if there is at least one strictly positive product of matrices of this family.
\end{definition}

Let $A^0 := \gamma (A)$ denote the $(0,1)$ matrix having a $1$ in a position exactly if in that position $A$ has a positive element. Define the set of matrices ${\cal A}^0 = \{\gamma (A): A \in \cal A\}.$ Then, obviously, $\cal A$ is primitive if and only if ${\cal A}^0$ is primitive. The definition and claim extends to infinite sets of matrices $\cal A.$ 

We will now extend the definition to stationary processes of non-negative random matrices. A matrix is called allowable, if it has no zero row or zero column. It is called row-allowable if it has no zero row.

\begin{definition}
A strictly stationary process of non-negative allowable random matrices $(A_n), n \ge 1,$ is called (forward) sequentially primitive if $M_\tau = A_\tau A_{\tau-1} \cdots A_1$ is strictly positive for some finite stopping time $\tau$ with probability 1 (w.p.1). For any $n\ge 1$ we define the (forward) index of sequential primitivity as 
\begin{equation}
\label{eq:def index of forward seq prim}
\psi_n  = \min \{\psi \ge 1: A_{n+ \psi -1} A_{n+\psi-2} \cdots A_n >0  \}.
\end{equation} 
\end{definition}

Since by assumption $A_n$ is row-allowable we will have $M_n > 0$ with strict inequality for all $n \ge \psi_1.$ It is also clear that a stationary process of non-negative random matrices $(A_n), n \ge 1,$ is (forward) sequentially primitive if and only if the stochastic process $(A^0_n),  n \ge 1,$ is (forward) sequentially primitive.

The definition extends to two-sided processes. In this case we may also define the concept of backward sequential primitivity, and the index of backward sequential primitivity as
\begin{equation}
\label{eq:def index of backward seq prim}
\rho_n  = \min \{\rho \ge 1: A_{n} A_{n-1} \cdots A_{n-\rho+1} >0  \}.
\end{equation}

\begin{lemma}
	\label{lem:seq prim forward backward}
        A two-sided strictly stationary sequence $(A_n)$ is forward sequentially primitive if and only if it is backward sequentially primitive. Moreover, the indices of forward and backward sequential primitivity, $\psi_n$ and $\rho_n,$ have the same distributions.
\end{lemma}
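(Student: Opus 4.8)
The plan is to reduce each of the two primitivity notions to the almost-sure finiteness of its index, and then to prove that $\psi_n$ and $\rho_n$ have the same law by writing both tail events as one and the same measurable functional evaluated on a block of consecutive matrices.

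First I would note that forward sequential primitivity is equivalent to $\psi_1 < \infty$ w.p.1: if $M_\tau = A_\tau \cdots A_1 > 0$ w.p.1 for a finite stopping time $\tau$, then $\psi_1 \le \tau < \infty$ w.p.1; conversely, since $\{\psi_1 \le k\}$ is determined by $A_1, \dots, A_k$, the index $\psi_1$ is itself a stopping time, and $M_{\psi_1} > 0$ on $\{\psi_1 < \infty\}$. By strict stationarity $\psi_n \stackrel{d}{=} \psi_1$ for every $n$, so forward sequential primitivity is equivalent to $\psi_n < \infty$ w.p.1 for some (equivalently, every) $n$. The analogous reduction holds for backward sequential primitivity, which is likewise equivalent to $\rho_n < \infty$ w.p.1.

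The crux is a monotonicity observation, and it is exactly here that the allowability hypothesis is used. Each $A_k$ is row-allowable, so left-multiplying a strictly positive matrix by $A_k$ yields a strictly positive matrix; iterating, if $A_{n+L-1}\cdots A_n > 0$ then $A_{n+L'-1}\cdots A_n > 0$ for all $L' \ge L$, whence $\{\psi_n \le L\} = \{A_{n+L-1}A_{n+L-2}\cdots A_n > 0\}$. Symmetrically, column-allowability gives $\{\rho_n \le L\} = \{A_n A_{n-1}\cdots A_{n-L+1} > 0\}$. Now both of these are of the form $\{g(\text{block of } L \text{ consecutive } A_k\text{'s}) = 1\}$ with the \emph{same} Borel function $g(M_0,\dots,M_{L-1}) := \mathbf 1\{M_{L-1}M_{L-2}\cdots M_0 > 0\}$: for $\psi_n$ one plugs in $(A_n, A_{n+1}, \dots, A_{n+L-1})$, and for $\rho_n$ one plugs in $(A_{n-L+1}, A_{n-L+2}, \dots, A_n)$. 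Strict stationarity (two-sidedness being needed to shift the block to the left) makes these two random blocks identically distributed, so $P(\psi_n \le L) = P(\rho_n \le L)$ for every $L \ge 1$; this gives $\psi_n \stackrel{d}{=} \rho_n$, and letting $L \to \infty$ it yields $P(\psi_n < \infty) = P(\rho_n < \infty)$. Combining with the previous paragraph finishes both assertions.

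I expect the only real obstacle to be the monotonicity step together with the accompanying index bookkeeping: one must use allowability to pass from "the minimal window length" defining $\psi_n$ and $\rho_n$ to the single threshold events above, and then line up the index ranges so that $\psi_n$ and $\rho_n$ become literally the same function of a stationary block. Everything else is immediate from stationarity, and ergodicity plays no role.
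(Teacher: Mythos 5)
Your proof is correct, but it takes a genuinely different route from the paper. The paper derives the lemma from a general ``forward--backward waiting time'' identity of queueing-theoretic flavour (its Lemma \ref{lem:forward backward paradox}): for a stationary non-negative process $(\Delta_n)$, the forward delay $\Delta_n$ and the backward delay $\Delta'_n = n - m_n$ to the last index $m$ with $m+\Delta_m\le n$ have the same law; applying this with $\Delta_m=\psi_m-1$ and identifying $\Delta'_n$ with $\rho_n-1$ gives the claim. That identification, which the paper leaves implicit, requires exactly the monotonicity-from-allowability observation you make explicit. You instead bypass the intermediate lemma entirely: using row-allowability (resp.\ column-allowability) you upgrade the minimal-window definitions to the threshold identities $\{\psi_n\le L\}=\{A_{n+L-1}\cdots A_n>0\}$ and $\{\rho_n\le L\}=\{A_n\cdots A_{n-L+1}>0\}$, recognize both as the same Borel functional of a block of $L$ consecutive matrices, and conclude $P(\psi_n\le L)=P(\rho_n\le L)$ for all $L$ by stationarity of the two-sided process. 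Your argument is more self-contained and elementary, and it delivers the equality of distributions (hence the equivalence of the two primitivity notions and $\mE\psi_n=\mE\rho_n$) in one stroke; the paper's route has the advantage of isolating a reusable general device, at the cost of the unstated bookkeeping linking $\Delta'_n$ to $\rho_n$. Your side remarks are also accurate: only stationarity (not ergodicity) is used, and two-sidedness is needed precisely to shift the block leftward.
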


The point in discussing both forward and backward primitivity will become clear in connection with Theorems \ref{thm:ei Mnx vs ei Mnw STATIONARY BOUNDED} and \ref{thm:ei Mnx vs ei Mnw STATIONARY GEN} below in which the natural assumption is that $(A_n), ~n \ge 1$ is \emph{forward} sequentially primitive, and ${\mathbb E} \psi_1 < \infty.$ However, in the proof we do need to ensure that for a two-sided extension of $(A_n)$ we have  ${\mathbb E} \rho_1 < \infty $.

Consider now the case of an i.i.d.\ sequence $(A_n), n \ge 1.$ 
\begin{remark}
	\label{rem:seq prim vs set prim} Let $(A_n), n \ge 1,$ be an i.i.d.\ sequence. Then it is sequentially primitive if and only if the set below is primitive:
	$$
	{\overline {\cal A}}^0 = \{C: P(\gamma (A_1) = C)  > 0  \}.
	$$
\end{remark}

Obviously, the range of $(\gamma(A_n)), n \ge 1$ is finite. This motivates the assumption in the lemma below.

\begin{lemma}
	\label{lem:sigma n tails iid A n}
	 Consider an i.i.d.\ sequence of non-negative, allowable $p \times p$ matrices $(A_n), ~-\infty < n < \infty$ having a finite range $\cal A,$ which is primitive. Then $\psi_n$ is finite w.p.1, and the tail-probabilities of $\psi_n$ decay geometrically, $P(\psi_n > x) < c \exp(-\alpha x)$ with some $c,\alpha >0.$ Analogous results hold for the indices of backward sequential primitivity $\rho_n.$  
	
\end{lemma}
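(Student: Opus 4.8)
The plan is to reduce everything to the $(0,1)$-matrices $A_n^0 = \gamma(A_n)$, since by the discussion preceding the lemma the process is sequentially primitive if and only if $(A_n^0)$ is, and $\psi_n$ depends only on the $\gamma$-images. So from now on I work with the i.i.d.\ sequence $(A_n^0)$, whose range is a finite primitive family $\mathcal A^0 = \{C_1,\dots,C_m\}$ of $(0,1)$-matrices, each allowable. First I would invoke primitivity of $\mathcal A^0$: there is a finite word $w = (D_1,\dots,D_L)$ with each $D_k \in \mathcal A^0$ such that the Boolean product $D_L \cdots D_1$ is strictly positive. Since each letter of $\mathcal A^0$ occurs with positive probability, the event $E_n := \{A_{n+L-1}^0 = D_L,\ \dots,\ A_n^0 = D_1\}$ has probability $\delta := \prod_k P(A_1^0 = D_k) > 0$, and on $E_n$ we have $A_{n+L-1}^0 \cdots A_n^0 > 0$, hence $\psi_n \le L$ on $E_n$.

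The key step is then a standard block/renewal argument. Partition time into consecutive blocks of length $L$ starting at $n$: the $j$-th block is $\{n + (j-1)L, \dots, n + jL - 1\}$. The events $\{$block $j$ equals the word $w\}$ for $j = 1, 2, \dots$ are i.i.d.\ with probability $\delta$, because they depend on disjoint sets of the i.i.d.\ matrices $A_k^0$. If the $j$-th block equals $w$, then the subproduct over that block is strictly positive; and since every matrix in $\mathcal A^0$ is row-allowable (no zero row) — which is preserved under products — multiplying a strictly positive matrix on the left by further row-allowable matrices keeps it strictly positive. Hence if some block among the first $\lceil x/L\rceil$ blocks equals $w$, then $\psi_n \le x$. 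Therefore
\begin{equation}
P(\psi_n > x) \le P\bigl(\text{none of the first } \lfloor x/L \rfloor \text{ blocks equals } w\bigr) = (1-\delta)^{\lfloor x/L\rfloor} \le c\, e^{-\alpha x},
\end{equation}
with $\alpha = -\tfrac{1}{L}\log(1-\delta) > 0$ and a suitable constant $c$ absorbing the floor. In particular $P(\psi_n = \infty) = 0$, so $\psi_n$ is finite w.p.1. By stationarity the bound is uniform in $n$. For the backward index $\rho_n$, the identical argument applies verbatim to the word read in the opposite direction (or one simply appeals to Lemma \ref{lem:seq prim forward backward}, which gives that $\rho_n$ and $\psi_n$ have the same distribution), yielding the same geometric tail.

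The only genuine subtlety — and the step I would state carefully rather than wave at — is the claim that left-multiplying a strictly positive matrix by allowable (row-allowable) $(0,1)$-matrices preserves strict positivity, so that once a block within the relevant horizon realizes the good word $w$ the whole product from time $n$ is positive regardless of the remaining letters. This is where row-allowability is used: if $B > 0$ (Boolean) and $D$ has no zero row, then $(DB)^{ij} = \bigvee_k D^{ik} B^{kj}$; picking any $k$ with $D^{ik} = 1$ (which exists since row $i$ of $D$ is nonzero) and using $B^{kj} = 1$ gives $(DB)^{ij} = 1$. Everything else is bookkeeping: the reduction to $(0,1)$-matrices, the finiteness of the range ensuring $\delta > 0$, and the i.i.d.\ block decomposition making the failure probabilities multiply.
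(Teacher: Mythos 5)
Your proof is essentially the paper's proof: the paper also fixes a finite positive word $w$ of some length $s$ guaranteed by primitivity, segments the index sequence into i.i.d.\ blocks of length $s$, waits for the first block equal to $w$, and reads off the geometric tail $P(\tau>x)=(1-p)^x$ with $\psi_n\le s\tau$; your version just spells out the details the paper leaves implicit. One directional slip, though, in the very step you flag as the subtlety: with $\psi_n=\min\{\psi: A_{n+\psi-1}\cdots A_n>0\}$, if block $j\ge 2$ is the good one, the full product up to time $n+jL-1$ is $B\cdot C$ with $B>0$ the block-$j$ product and $C$ the product over blocks $1,\dots,j-1$ sitting to the \emph{right} of $B$. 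What you need is therefore that right-multiplying a strictly positive matrix by a matrix with no zero \emph{column} preserves strict positivity ($(BC)^{ij}=\bigvee_k B^{ik}C^{kj}=1$ by choosing $k$ with $C^{kj}=1$), not the left-multiplication/row-allowable fact you prove. Since the $A_n$ are assumed allowable (no zero row \emph{and} no zero column, a property preserved under products), the needed half is available and the fix is one line, so the argument goes through; the row-allowable half is what the paper uses elsewhere to conclude $M_n>0$ for all $n\ge\psi_1$, which is a different statement.
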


The almost trivial proof will be given in Appendix I. The above lemma implies that $\mE \psi_n < \infty,$ and since $\psi_n$ has the same distribution for all $n,$ the sequence $\psi_n$ is sub-linear, i.e. $\psi_n = o(n)$ a.s. Obviously, the same holds for the backward indices of sequential primitivity, i.e. $\rho_n = o(n)$ a.s.

\section{Normalized products of non-negative random matrices}
\label{sec:Normalized_Products}

In this section we describe the extension of a nice result of \cite{atar1997lyapunov}, the proof of which inspired the proofs of the main theorems of the present paper.

Let $(A_n)$ be a sequence of allowable $p \times p$ matrices. Let $x, w \in \mathbb R^p$ be component-wise non-negative vectors, written as $x,w \ge 0,$ the set of which will be denoted by $\mathbb R_+^p,$ such that $x,w \neq 0.$ Define the sequences 
\begin{align}
x_n &:= M_n x = A_n A_{n-1} \cdots A_1 x,\\
w_n &:= M_n w = A_n A_{n-1} \cdots A_1 w.
\end{align}
Obviously $x_n$ and $w_n$ are non-negative, and since the $A_n$-s are allowable and $x,w \neq 0$, we have $x_n, w_n \neq 0.$
Therefore we can define 
\begin{equation}
\label{eq:DEF pf BAR x and BAR w}
\bar{x}_n = x_n/(1^\top x_n), \quad \bar{w}_n = w_n/ (1^\top w_n).
\end{equation}
The following result is a straightforward extension of \cite{atar1997lyapunov}. In the theorem $\Vert \bar{x}_n- \bar{w}_n \Vert_{\rm TV} : = {\frac 1 2} \sum_{i=1}^p |\bar{x}_n^i -\bar{w}_n^i|$ denotes the total variation distance of the probability vectors $\bar{x}_n$ and $\bar{w}_n$.

\begin{theorem}
\label{thm:barMnx vs barMnw}
	Assume that $(A_n), ~n \ge 1$ is a strictly stationary, ergodic process of random $p \times p$ matrices such that $\mE \log^+
	\|A_1\| < \infty$. In addition assume that $A_n$ is non-negative and allowable
	for all $n,$ and assume that the process $(A_n)$ is sequentially primitive. Then for all pairs $(x,w) \in \mathbb R_+^p \times \mathbb R_+^p,$ except for a set of Lebesgue-measure zero, it holds that
	$$
	\lim_{n \rightarrow \infty} \frac{1}{n} \log \Vert \bar{x}_n- \bar{w}_n \Vert_{\rm TV} = - (\lambda_1 -
	\lambda_2) \quad {\rm a.s.,}
	$$
where $\lambda_1$ and $\lambda_2$ are the first and second largest Lyapunov-exponents associated with $(A_n).$ In addition, for any fixed pair $(x,w) \in \mathbb  R_+^p \times \mathbb R_+^p$ with strictly positive components with no exception it holds that the above limit exists a.s. and  
$$
\lim_{n \rightarrow \infty} \frac{1}{n} \log \Vert \bar{x}_n- \bar{w}_n \Vert_{\rm TV} \le - (\lambda_1 -
\lambda_2).
$$
\end{theorem}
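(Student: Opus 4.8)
The plan is to use the rank-one approximation of $M_n$ coming from Oseledec's and Raghunathan's theorems. I assume throughout $\lambda_1>\lambda_2$ — which holds under sequential primitivity (cf.\ the interlude on Birkhoff's contraction coefficient), and when $\lambda_1=\lambda_2$ the asserted upper bound is already contained in $\|\bar x_n-\bar w_n\|_{\rm TV}\le 1$. The preparatory facts I would record are: $\sigma_n^k=e^{(\lambda_k+o(1))n}$; $v_n^{1\cdot}\to v^{1\cdot}$ with $v_n^{1\cdot}-v^{1\cdot}=O(e^{-(\lambda_1-\lambda_2+o(1))n})$, and more generally $SV_n^{I_r\cdot}\to SV^{I_r\cdot}$ a.s.; by sequential primitivity and row-allowability $M_n>0$ for $n\ge\psi_1$, hence $M_nM_n^\top,M_n^\top M_n>0$, so (fixing signs) $u_n^{\cdot 1}>0$, $v_n^{1\cdot}>0$, giving $v^{1\cdot}\ge 0$, $v^{1\cdot}\ne 0$, $1\le\mathbf 1^\top u_n^{\cdot 1}\le\sqrt p$; and the limit blocks of the $V_n$-rows are mutually orthogonal, so $\mathbb R^p=\mathrm{span}(v^{1\cdot})\oplus\bigoplus_{r\ge 2}SV^{I_r\cdot}$. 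Writing $\pi_n:=u_n^{\cdot 1}/(\mathbf 1^\top u_n^{\cdot 1})$ and $\tilde u_n^{\cdot k}:=u_n^{\cdot k}-(\mathbf 1^\top u_n^{\cdot k})\pi_n$ for $k\ge 2$, I get $\mathbf 1^\top\tilde u_n^{\cdot k}=0$ and, since $\pi_n\parallel u_n^{\cdot 1}\perp u_n^{\cdot k}$, the Gram matrix of $\{\tilde u_n^{\cdot k}\}_{k\ge 2}$ equals $I+cc^\top\succeq I$. Since $\bar x_n-\bar w_n=M_n\tilde z_n$ with $\tilde z_n:=x/(\mathbf 1^\top M_n x)-w/(\mathbf 1^\top M_n w)$ and $\mathbf 1^\top(\bar x_n-\bar w_n)=0$, expanding $M_n\tilde z_n$ in the SVD basis and using $\tilde u_n^{\cdot 1}=0$ produces the exact identity
$$\bar x_n-\bar w_n=\sum_{k\ge 2}\tilde u_n^{\cdot k}\,\sigma_n^k\,(v_n^{k\cdot}\tilde z_n),$$
on which both bounds will be read off.

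For the upper bound, fix $(x,w)$ with $v^{1\cdot}x>0$, $v^{1\cdot}w>0$. Then $\mathbf 1^\top M_n x=(\mathbf 1^\top u_n^{\cdot 1})(v_n^{1\cdot}x)\sigma_n^1+O(e^{(\lambda_2+o(1))n})=\Theta(e^{(\lambda_1+o(1))n})$, likewise for $w$, so $\|\tilde z_n\|=O(e^{-(\lambda_1+o(1))n})$; combined with $\|\tilde u_n^{\cdot k}\|\le\sqrt{1+p}$ and $\sigma_n^k\le\sigma_n^2=e^{(\lambda_2+o(1))n}$ ($k\ge 2$), the identity gives $\|\bar x_n-\bar w_n\|_{\rm TV}=O(e^{-(\lambda_1-\lambda_2+o(1))n})$, i.e.\ $\limsup_n\frac1n\log\|\bar x_n-\bar w_n\|_{\rm TV}\le-(\lambda_1-\lambda_2)$ a.s.; for strictly positive $(x,w)$ the hypothesis $v^{1\cdot}x,v^{1\cdot}w>0$ holds with no exception.

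For the lower bound I would normalize $\tilde z_n=z^{(n)}/((\mathbf 1^\top u_n^{\cdot 1})\sigma_n^1)$ and note, from the above estimate and $v_n^{1\cdot}\to v^{1\cdot}$, that $z^{(n)}\to y^*:=x/(v^{1\cdot}x)-w/(v^{1\cdot}w)$ a.s. With $\Pi_S$ the orthogonal projection onto $S$, the Gram bound plus restriction to $k\in I_2$ yield
$$\|\bar x_n-\bar w_n\|_{\rm TV}\ge\tfrac12\Big(\sum_{k\ge 2}(\sigma_n^k)^2(v_n^{k\cdot}\tilde z_n)^2\Big)^{1/2}\ge\frac{\min_{k\in I_2}\sigma_n^k}{2\sqrt p\,\sigma_n^1}\,\big\|\Pi_{SV_n^{I_2\cdot}}z^{(n)}\big\|,$$
where $\min_{k\in I_2}\sigma_n^k/\sigma_n^1=e^{-(\lambda_1-\lambda_2+o(1))n}$ and $\|\Pi_{SV_n^{I_2\cdot}}z^{(n)}\|\to\|\Pi_{SV^{I_2\cdot}}y^*\|$ (as $SV_n^{I_2\cdot}\to SV^{I_2\cdot}$, $z^{(n)}\to y^*$). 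Hence \emph{provided $\Pi_{SV^{I_2\cdot}}y^*\ne 0$}, $\liminf_n\frac1n\log\|\bar x_n-\bar w_n\|_{\rm TV}\ge-(\lambda_1-\lambda_2)$, and with the upper bound the stated limit holds. To finish the a.e.\ part I would show the event $\{v^{1\cdot}x\ne 0,\;v^{1\cdot}w\ne 0,\;\Pi_{SV^{I_2\cdot}}y^*\ne 0\}$ has probability $1$ for Lebesgue-a.e.\ $(x,w)$: for fixed $\omega$, $\{v^{1\cdot}x=0\}$ lies in a coordinate hyperplane and is null, and since $(SV^{I_2\cdot})^\perp=\mathrm{span}(v^{1\cdot})\oplus\bigoplus_{r\ge 3}SV^{I_r\cdot}$ is not contained in $(v^{1\cdot})^\perp$, for each fixed $w$ the bad $x$ form the preimage (under $x\mapsto x/(v^{1\cdot}x)$, which has one-dimensional fibres) of an affine set of dimension $\le p-2$, hence a null set; Tonelli over $w$, then over $\omega$ (using the measurability of $\omega\mapsto v^{1\cdot},SV^{I_r\cdot}$) gives the claim, and this same event also guarantees the $v^{1\cdot}x,v^{1\cdot}w>0$ needed in the upper bound.

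The remaining, and hardest, point is that for \emph{every} strictly positive $(x,w)$ the limit — not just the $\limsup$ — exists and is $\le-(\lambda_1-\lambda_2)$; this must handle the non-generic case $\Pi_{SV^{I_2\cdot}}y^*=0$, i.e.\ $y^*\in L_{\mu_3}$. Here I would refine the expansion: substituting $z^{(n)}=y^*+\zeta_n$ with $\|\zeta_n\|=O(e^{-(\lambda_1-\lambda_2+o(1))n})$ and simplifying $v^{1\cdot}\zeta_n=-\mathbf 1^\top M_n y^*/((\mathbf 1^\top u_n^{\cdot 1})\sigma_n^1)+O(e^{-2(\lambda_1-\lambda_2+o(1))n})$ collapses the leading term to
$$\bar x_n-\bar w_n=\frac{(I-\pi_n\mathbf 1^\top)M_n y^*}{(\mathbf 1^\top u_n^{\cdot 1})\sigma_n^1}+O\!\big(e^{-2(\lambda_1-\lambda_2+o(1))n}\big),\qquad y^*\in L_{\mu_2},$$
so $\lambda(y^*)$ exists by Oseledec. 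Applying the Gram bound to $(I-\pi_n\mathbf 1^\top)M_n y^*=\sum_{k\ge 2}\tilde u_n^{\cdot k}\sigma_n^k(v_n^{k\cdot}y^*)$ together with $\Pi_{SV^{I_r\cdot}}y^*\ne 0$ for the block $r$ with $\mu_r=\lambda(y^*)$ pins the main term's rate to $\lambda(y^*)-\lambda_1$, and iterating (each step controlled by a Raghunathan-type rate for the pertinent singular subspace) should identify $\lim_n\frac1n\log\|\bar x_n-\bar w_n\|_{\rm TV}=-(\lambda_1-\mu_r)$ for the first $r\ge 2$ with $\Pi_{SV^{I_r\cdot}}y^*\ne 0$, and $-\infty$ when $y^*=0$ (i.e.\ $w$ a positive multiple of $x$). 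The main obstacle is exactly this last iteration: the two bounds are routine, but to upgrade the a.e.\ statement to one valid for every positive pair one has to check that, when $y^*$ sits in a deep Oseledec subspace, none of the error layers generated along the way spuriously overtakes the surviving main term — which forces one to use the quantitative convergence rates of all the subspaces $SV_n^{I_r\cdot}$, not only of $v_n^{1\cdot}$.
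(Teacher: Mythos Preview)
Your approach via the SVD identity $\bar x_n-\bar w_n=\sum_{k\ge 2}\tilde u_n^{\cdot k}\sigma_n^k(v_n^{k\cdot}\tilde z_n)$ is genuinely different from the paper's, and your arguments for the $\limsup$ upper bound and for the a.e.\ equality are correct (if somewhat laborious). However, the paper's route is both shorter and completes the part of the proof where yours stalls.

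The paper's observation is that for probability vectors $\|\bar x_n-\bar w_n\|_{\rm TV}\sim|\sin(x_n,w_n)|=|x_n\wedge w_n|/(|x_n|\,|w_n|)$, with constants depending only on $p$. This reduces everything to two ingredients: (i) $\tfrac1n\log|x_n|\to\lambda_1$ for every strictly positive $x$ (your Perron--Frobenius argument on $M_n^\top M_n$ gives exactly this, and is essentially the paper's Lemma~\ref{lem:Oseledec for An positive and x positive}); and (ii) the behaviour of $\tfrac1n\log|(M_n\wedge M_n)(x\wedge w)|$. For (ii) one applies Oseledec's theorem to the sequence $(A_n\wedge A_n)$ acting on $\mathbb R^p\wedge\mathbb R^p$: the limit \emph{exists for every} pair $(x,w)$ and is $\le\lambda_1+\lambda_2$, with equality for Lebesgue-a.e.\ $(x,w)$ (this last a.e.\ statement needs a short argument, the paper's Lemma~\ref{lem:almost all x and w pairs}, that the rank-one tensors cannot be trapped in a proper subspace on a set of positive measure).

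This exterior-product device is precisely what dissolves your acknowledged obstacle. You correctly identify that for non-generic strictly positive $(x,w)$ with $\Pi_{SV^{I_2\cdot}}y^*=0$ you must iterate through the Oseledec filtration, controlling at each depth the error layers so that none spuriously overtakes the surviving main term; you do not carry this out, and it is genuinely delicate because it requires quantitative convergence rates for \emph{all} the blocks $SV_n^{I_r\cdot}$, not just for $v_n^{1\cdot}$. In the wedge picture the existence of the limit for every $(x,w)$ is immediate: whatever Lyapunov level $x\wedge w$ occupies in the filtration of $(A_n\wedge A_n)$, Oseledec already says the limit exists and is $\le\lambda_1+\lambda_2$. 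No iteration, no error-tracking.

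In summary: your method proves the first (a.e.) assertion, but the second assertion --- that the limit exists for \emph{every} strictly positive pair --- remains unproved in your sketch, and the paper's exterior-product reduction gets it for free.
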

The proof of Theorem~\ref{thm:barMnx vs barMnw} is a straightforward extension of the proof of Theorem 1 in \cite{atar1997lyapunov} and will be given in Appendix II. We should note, however, that the proof given in \cite{atar1997lyapunov} contains two non-trivial deficiencies. These will be rectified by the lemmas below, the proofs of which will be given also in Appendix II. The first lemma was implicitly stated in \cite{atar1997lyapunov}, with a minor flaw in the proof:  
		
		\begin{lemma} 
			\label{lem:Oseledec for An positive and x positive} 
			 Let the sequence of matrices $(A_n)$ be as in Theorem \ref{thm:barMnx vs barMnw}. Then there exists a subset $\Omega' \subset \Omega$ with $P(\Omega')=1$ such that for all $\omega \in \Omega'$ it holds that any strictly positive vector $x >0, x \in \mathbb R^p$ is contained in $x \in L_{\mu_1}  \setminus L_{\mu_{2}},$ see \eqref{eq:def L mu r} -- \eqref{eq:lambda i eq mu r}: 
			\begin{equation}
			\lambda_1 = \lim {\frac 1 n} \log \vert A_n A_{n-1} \cdots A_1 x\vert.  
			\end{equation}	
		\end{lemma}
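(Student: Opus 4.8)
The plan is to show that for a strictly positive $x$, the Lyapunov exponent $\lambda(x)$ cannot drop below $\mu_1 = \lambda_1$, i.e. $x \notin L_{\mu_2}$. Combined with the trivial upper bound $\lambda(x) \le \lambda_1$ (valid for every vector), this forces $\lambda(x) = \lambda_1$, hence $x \in L_{\mu_1} \setminus L_{\mu_2}$. The key resource is sequential primitivity: there is an a.s.\ finite stopping time $\tau$ with $M_\tau = A_\tau \cdots A_1 > 0$, and more generally the forward index of sequential primitivity $\psi_n$ is a.s.\ finite, strictly stationary in $n$, and (by Lemma~\ref{lem:sigma n tails iid A n} in the i.i.d.\ case, or the assumed $\mE \psi_1 < \infty$ in general) satisfies $\psi_n = o(n)$ a.s.

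First I would fix $\omega$ in the full-probability Oseledec set and pick any nonzero $y \in \mathbb R_+^p$ (for instance a coordinate vector $e_k$, or a vector realizing the top exponent); along a.e.\ such direction $\lambda(y) = \lambda_1$ by the Fubini corollary \eqref{eq:lim An A1 x}. The main step is a \emph{domination} argument: since all $A_n$ are non-negative and $x > 0$ is strictly positive, there is a constant $c = c(x,y) > 0$ with $x \ge c\, y$ componentwise; applying the non-negative matrices $M_n$ preserves this inequality, so $M_n x \ge c\, M_n y \ge 0$ componentwise, whence $|M_n x| \ge c\,|M_n y|$ and therefore $\lambda(x) \ge \lambda(y) = \lambda_1$. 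This already gives the result, and in fact shows that the a.s.\ statement holds simultaneously for \emph{all} strictly positive $x$, not just almost all — the exceptional null set is only used to locate one good direction $y$.

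The subtle point — and where the flaw in \cite{atar1997lyapunov} presumably lies — is the choice of $y$: one wants $y$ to be a fixed deterministic vector (so that $x \ge cy$ makes sense before $\omega$ is seen), yet $\lambda(y) = \lambda_1$ holds only for Lebesgue-a.e.\ $y$, and a priori the bad directions could conspire. I would handle this by choosing $y = \mathbf{1}$, the all-ones vector, and invoking sequential primitivity directly: for $n \ge \psi_1(\omega)$ we have $M_n = A_n \cdots A_{\psi_1} \cdot M_{\psi_1 - 1}$ with $A_n \cdots A_{\psi_1} > 0$ strictly positive (using row-allowability to keep strict positivity for all larger $n$), so $M_n \mathbf{1} > 0$ has all coordinates positive; then a positive strictly-positive matrix maps the whole nonneg cone into a bounded slice of itself, giving $|M_n \mathbf{1}| \ge \kappa_n \|M_n\|$ for a factor $\kappa_n$ that degrades only sub-exponentially (controlled by the Birkhoff/primitivity structure over the block of length $\psi_n = o(n)$). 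Taking $\tfrac1n \log$ and using $\psi_n = o(n)$ yields $\lambda(\mathbf{1}) = \lambda_1$ with no exceptional set, and then the domination $x \ge c\,\mathbf{1}$ finishes the proof.

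The hard part will be making the last estimate $|M_n\mathbf 1| \ge \kappa_n \|M_n\|$ precise with $\tfrac1n\log\kappa_n \to 0$: one must show that the strict positivity acquired at time $\psi_1$, possibly lost and regained along the sequence, controls the ratio between $|M_n \mathbf 1|$ and $\sigma^1_n = e^{(\lambda_1 + o(1))n}$. I expect this to follow from a Borel–Cantelli argument on the blocks where $\psi_n$ is large together with stationarity of $\psi_n$ and $\mE \psi_1 < \infty$, but it is the one place requiring genuine care rather than a one-line deduction.
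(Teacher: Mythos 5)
Your approach is correct, and it is genuinely different from the paper's. The paper argues through the singular value decomposition: for $n \ge \tau$ sequential primitivity makes $M_n^\top M_n$ a strictly positive symmetric matrix, Perron--Frobenius then forces the top right-singular vector $v_n^{1\cdot}$ to be strictly positive with $\vert v_n^{1\cdot}\vert = 1$, hence $v_n^{1\cdot}x \ge \min_i x^i/\sqrt{p} > 0$ uniformly in $n$, and $\vert M_n x\vert^2 = x^\top M_n^\top M_n x \ge (\sigma_n^1)^2 (v_n^{1\cdot}x)^2$ yields $\liminf \frac 1n \log \vert M_n x\vert \ge \lambda_1$. Your cone-domination argument reaches the same lower bound without touching the SVD; the only thing to add is that the step you single out as the hard one is in fact a one-line triviality requiring neither primitivity, nor Borel--Cantelli, nor the indices $\psi_n$: for any non-negative matrix $M$ the $i$-th entry of $M\mathbf 1$ is the $i$-th row sum, so, there being no cancellation among non-negative entries, $\vert M\mathbf 1\vert \ge \max_i \sum_j M^{ij} \ge \max_{ij} M^{ij} \ge \Vert M\Vert/p$. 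Hence $\liminf \frac 1n \log \vert M_n\mathbf 1\vert \ge \lim \frac 1n \log \Vert M_n\Vert = \lambda_1$ by Proposition \ref{prop:FK}, and the componentwise inequality $M_n x \ge (\min_i x^i)\, M_n\mathbf 1 \ge 0$ gives $\liminf \frac 1n \log \vert M_n x\vert \ge \lambda_1$; combined with the trivial bound $\limsup \frac 1n \log \vert M_n x\vert \le \lambda_1$ this closes the proof with no exceptional set of directions and no appeal to \eqref{eq:lim An A1 x}. Once that observation is made, your route is arguably more elementary than the paper's (it uses only non-negativity of the $A_n$, not the strict positivity of $M_n$ for $n \ge \tau$), and both arguments deliver the conclusion simultaneously for all $x>0$ on a single full-measure set, as the lemma requires.
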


The second result was tacitly used in \cite{atar1997lyapunov}, with no proof. Here the notion of exterior product of vectors and matrices, denoted by $x \wedge w$ and $A \wedge B,$ resp., is used. Here $x \wedge w$ can be identified with the anti-symmetric matrix $x w^\top - w x^\top,$ and $(A \wedge B) (x \wedge w) = Ax \wedge Bw,$ see \cite{flanders1963differential}. 

	\begin{lemma}  
		\label{lem:almost all x and w pairs}
		Let $(A_n)$ be a strictly stationary, ergodic process of $p \times p$ random matrices $A_1, A_2,\ldots,$ such that $\mE \log^+ \Vert A_n \Vert < \infty.$ Consider the exterior product space $\mathbb R^p \wedge \mathbb R^p$ and the matrices $A_n \wedge A_n$ acting on it. Then for all pairs $(x,w) \in \mathbb R^p \times \mathbb R^p$, except for a set of Lebesgues measure zero, the a.s. limit
		\begin{equation*}
		\lim_{n \rightarrow \infty} \frac{1}{n} \log \vert ((A_nA_{n-1}\cdots A_1) \wedge (A_nA_{n-1}\cdots
		A_1))(x \wedge w) \vert 
		\end{equation*}
		exists and is equal to $\lambda_1 + \lambda_2.$ 
\end{lemma}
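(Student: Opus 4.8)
The plan is to reduce the statement to a direct application of Oseledec's theorem (in the corollary form \eqref{eq:lim An A1 x}) for the sequence of exterior-square matrices $B_n := A_n \wedge A_n$ acting on the $\binom{p}{2}$-dimensional space $\mathbb R^p \wedge \mathbb R^p$. First I would check that $(B_n)$ inherits the hypotheses needed: since $A \mapsto A \wedge A$ is a fixed continuous (polynomial) map of matrices, $(B_n)$ is again strictly stationary and ergodic, being a measurable image of $(A_n)$ under a shift-commuting map. The integrability condition $\mE \log^+ \Vert B_1 \Vert < \infty$ follows from $\mE \log^+ \Vert A_1 \Vert < \infty$ because $\Vert A \wedge A \Vert \le c_p \Vert A \Vert^2$ for a dimensional constant $c_p$, so $\log^+ \Vert B_1 \Vert \le 2 \log^+ \Vert A_1 \Vert + \log^+ c_p$, which is integrable. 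Hence Oseledec's theorem applies to $(B_n)$ and yields a full Lyapunov spectrum for the products $M_n \wedge M_n = B_n B_{n-1} \cdots B_1$ (using the multiplicativity $(A\wedge A)(B \wedge B) = AB \wedge AB$).

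Next I would identify the top Lyapunov exponent of $(B_n)$ with $\lambda_1 + \lambda_2$. This is the classical fact that the Lyapunov exponents of the second exterior power are exactly the pairwise sums $\lambda_i + \lambda_j$ with $i < j$, the largest being $\lambda_1 + \lambda_2$. The cleanest way to see this within the framework already set up in the excerpt is via the singular value decomposition: from \eqref{eq:Mn SVD}--\eqref{eq:Mn SVD Sigma n}, $M_n = U_n \Sigma_n V_n$ with $\sigma^k_n = e^{(\lambda_k + o(1)) n}$, and then $M_n \wedge M_n = (U_n \wedge U_n)(\Sigma_n \wedge \Sigma_n)(V_n \wedge V_n)$, where $U_n \wedge U_n$ and $V_n \wedge V_n$ are again orthonormal and $\Sigma_n \wedge \Sigma_n$ is diagonal with entries $\sigma^i_n \sigma^j_n = e^{(\lambda_i + \lambda_j + o(1))n}$ for $i<j$. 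Thus the singular values of $M_n \wedge M_n$ have exponential growth rates $\{\lambda_i + \lambda_j : i < j\}$, and in particular the operator norm grows like $e^{(\lambda_1 + \lambda_2 + o(1))n}$, so the top Lyapunov exponent of $(B_n)$ is $\lambda_1 + \lambda_2$.

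Finally I would invoke \eqref{eq:lim An A1 x} applied to $(B_n)$: for Lebesgue-almost every $\xi \in \mathbb R^p \wedge \mathbb R^p$,
\begin{equation*}
\lim_{n \to \infty} \frac{1}{n} \log \vert (M_n \wedge M_n)\, \xi \vert = \lambda_1 + \lambda_2 \quad \text{a.s.}
\end{equation*}
It remains to transfer this from a full-measure set of $\xi$'s to a full-measure set of pairs $(x,w) \in \mathbb R^p \times \mathbb R^p$. The map $(x,w) \mapsto x \wedge w$ is bilinear, so its image is the cone of decomposable (rank-$\le 1$) bivectors, which for $p \ge 3$ is a proper algebraic subvariety of $\mathbb R^p \wedge \mathbb R^p$ of measure zero; hence I cannot simply pull back the a.e. statement. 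Instead I would argue as in the proof of Lemma~\ref{lem:Oseledec for An positive and x positive} / the Fubini argument behind \eqref{eq:lim An A1 x}: let $N = \{\xi : \text{limit fails or } \ne \lambda_1 + \lambda_2\}$, a set of Lebesgue measure zero in $\mathbb R^p \wedge \mathbb R^p$ (for a.e.\ $\omega$); then show that $\{(x,w) : x \wedge w \in N\}$ has Lebesgue measure zero in $\mathbb R^{2p}$. This is because the bilinear map $\phi(x,w) = x \wedge w$ is a submersion off the set $\{x \wedge w = 0\} = \{x, w \text{ linearly dependent}\}$ (itself of measure zero), so $\phi$ pulls back Lebesgue-null sets to Lebesgue-null sets by the coarea/Sard-type change of variables; combining the two null sets gives the claim. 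The value of the limit being exactly $\lambda_1 + \lambda_2$ (not just $\le$) then holds off this exceptional null set.

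The step I expect to be the main obstacle is precisely this last transfer from bivectors to pairs: unlike the single-vector case in \eqref{eq:lim An A1 x}, the decomposable bivectors form a measure-zero subset of $\mathbb R^p \wedge \mathbb R^p$, so one must work with the structure of the bilinear map $\phi$ rather than quoting Fubini directly. Establishing that $\phi^{-1}(N)$ is null — i.e.\ that generic pairs $(x,w)$ map to bivectors where Oseledec behaves generically — requires the submersion/change-of-variables argument on the complement of the linearly-dependent locus; once that is in place, the rest is bookkeeping with the exterior algebra identities already recorded in the excerpt.
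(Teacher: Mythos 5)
Your reduction to the exterior square $B_n = A_n \wedge A_n$ and the identification of its top Lyapunov exponent with $\lambda_1+\lambda_2$ via the SVD identity $M_n \wedge M_n = (U_n\wedge U_n)(\Sigma_n\wedge\Sigma_n)(V_n\wedge V_n)$ are both correct and match the paper's starting point. The gap is exactly where you suspected it, and your proposed fix does not work. The map $\phi(x,w)=x\wedge w$ is \emph{not} a submersion off the degenerate locus for $p\ge 4$: its differential $(\delta x,\delta w)\mapsto \delta x\wedge w + x\wedge\delta w$ has rank at most $2p-3$, which is strictly less than $\dim(\mathbb R^p\wedge\mathbb R^p)=\binom{p}{2}$ once $p\ge 4$ (and for $p\ge 6$ even the domain dimension $2p$ is too small for surjectivity). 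Consequently $\phi$ does not pull back Lebesgue-null sets to Lebesgue-null sets: its entire image, the cone of decomposable bivectors, is itself Lebesgue-null in $\mathbb R^p\wedge\mathbb R^p$, so the a.e.\ statement \eqref{eq:lim An A1 x} applied to $(B_n)$ gives you no information whatsoever about which decomposable bivectors are exceptional --- the null set $N$ could a priori contain the whole decomposable cone. (Your parenthetical that the cone is proper ``for $p\ge 3$'' is also off: for $p=3$ every bivector is decomposable, which is incidentally the only case where your submersion argument would go through.)

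The repair requires using more than nullity of the exceptional set: Oseledec's theorem gives that the exceptional set for $(B_n)$ is a \emph{proper random linear subspace} $V_2'$ of $\mathbb R^p\wedge\mathbb R^p$ (the second subspace of the Oseledec filtration for the exterior-square process), not merely a null set. This is what the paper exploits. One pulls $V_2'$ back through the antisymmetrization map $S(X)=X-X^\top$ to a proper linear subspace $V_2''$ of the tensor product space $\mathbb R^p\otimes\mathbb R^p$, and then argues by contradiction: if the set of pairs $(x,w)$ with $x\otimes w\in V_2''$ had positive Lebesgue measure, then by Fubini there would be a positive-measure set of $x$'s each admitting a positive-measure set of partners $w$; the resulting tensors $x\otimes w$ would span all of $\mathbb R^p\otimes\mathbb R^p$, contradicting their containment in the proper subspace $V_2''$. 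Linearity of the exceptional set is essential here --- a generic null set contains no spanning obstruction --- so you should replace the coarea step with this spanning argument to close the proof.
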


Motivated by Theorem \ref{thm:barMnx vs barMnw} we consider the possibility of an extension of the results concerning the push-sum or weighted gosspip algorithms under significantly more general conditions.

\section{A generalized ratio consensus}
\label{sec:Main Theorems}

In this section we will formalize our main results on the convergence rate of a generalized ratio consensus algorithm. The common setup for our results will be based on Theorem \ref{thm:barMnx vs barMnw}. However, this will have to be complemented by a variety of additional conditions imposed on $(A_n)$.

For the formulation of our technical results we will need to impose further conditions on the positive elements of $A_n,$ controlling the possibility of moving a random fraction (or share) of values and weights during a transaction. Let us introduce the following notations for the minimal and maximal positive elements of $A_n$:
\begin{equation}
\label{eq:def alpha n beta n}
\alpha_n: = \min_{ij} \{A_{n}^{ ij}: A_{n}^{ ij} >0 \}, \qquad \beta_n := \max_{ij} A_{n}^{ ij}.
\end{equation}
Since $\beta_n $ is equivalent to $\|A_n\|,$ it follows immediately that 
$\mE \log^+ \beta_n < \infty$. A direct consequence of this is that for any $\varepsilon >0$ we have a.s. $\beta_n = O(e^{\varepsilon n}),$ i.e. $\beta_n$ is sub-exponential (see below). A twin pair of the condition $\mE \log^+ \beta_n < \infty$ is the following:

\begin{condition}
\label{cond:E log alpha n} 
Let $(A_n)$ be a strictly stationary, ergodic process of random, $p \times p$ non-negative matrices. We assume that ${\mE} \log^- \alpha_n > -\infty,$ where $\alpha_n$ is the minimal positive element of $A_n$ defined above.   	
\end{condition}

A direct consequence of this condition is that ${\mE} \log^+ \frac 1 {\alpha_n} < \infty,$ implying that $\frac 1 {\alpha_n}$ is sub-exponential. The above condition is obviously satisfied if $(A_n)$ takes its values form a finite set, say $\cal A,$ w.p.1, which is the case with the push-sum algorithm allowing packet loss.

\begin{theorem}
	\label{thm:ei Mnx vs ei Mnw IID}
	Assume that the conditions of Theorem~\ref{thm:barMnx vs barMnw} are satisfied, in addition the sequence $(A_n)$ is i.i.d., and $ \lambda_1 - \lambda_2 > 0.$ Furthermore, assume that the minimal positive elements of $A_n$ satisfy Condition \ref{cond:E log alpha n}. Let $e_k$ denote the $k$-th unit vector for any $k=1, \dots ,p.$ 
	Take an arbitrary vector of initial values $x \in  \mathbb R^p,$ and a non-negative vector of initial weights $w \in \mathbb R^p_+ $ such that $w \neq 0.$
	Then ratio consensus takes place and an explicit upper bound for the rate of convergence can be given as follows: for all $i=1, \ldots, p$ we have 
	\begin{equation}
	\label{eq:ek Mnx vs ek Mnw}
	\limsup_{n \rightarrow \infty} {\frac 1 n} \log \left \vert \frac{e_i^\top M_n x}{e_i^\top M_n w}  - \frac{v^{1\cdot}x}{v^{1\cdot}w} \right \vert \le -(\lambda_1 - \lambda_2) \quad {\rm a.s.}
	\end{equation}
\end{theorem}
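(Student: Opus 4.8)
The plan is to reduce the convergence of the ratios $e_i^\top M_n x / e_i^\top M_n w$ to the rank-one approximation \eqref{eq:Mn SVD Rank 1} of $M_n$, and then to control the error terms using the quantitative bounds already available: \eqref{eq:V1dot_n vs V1dot} for the convergence of the top right singular vector, \eqref{eq:Mn SVD Sigma n} for the singular values, and Lemma~\ref{lem:sigma n tails iid A n} (together with Condition~\ref{cond:E log alpha n}) to handle the denominator from below. The first step is to write, using \eqref{eq:Mn SVD Rank 1},
\[
e_i^\top M_n x = \sigma_n^1 (u_n^{\cdot 1})^i \, v^{1\cdot} x + e_i^\top R_n x, \qquad
e_i^\top M_n w = \sigma_n^1 (u_n^{\cdot 1})^i \, v^{1\cdot} w + e_i^\top R_n w,
\]
where $R_n = O(e^{(\lambda_2 + o(1))n})$. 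Since $w > 0$ componentwise may be assumed on a full-Lebesgue-measure set (or is given), and $v^{1\cdot}$ is strictly positive by Lemma~\ref{lem:Oseledec for An positive and x positive} applied in the transposed/backward picture, the quantity $v^{1\cdot}w$ is a strictly positive constant, so the leading term in the denominator is genuinely of order $\sigma_n^1 (u_n^{\cdot 1})^i$.

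The second step is the arithmetic of the ratio: subtracting $v^{1\cdot}x / v^{1\cdot}w$ and putting over a common denominator gives
\[
\frac{e_i^\top M_n x}{e_i^\top M_n w} - \frac{v^{1\cdot}x}{v^{1\cdot}w}
= \frac{ (e_i^\top R_n x)(v^{1\cdot}w) - (e_i^\top R_n w)(v^{1\cdot}x) }{ (e_i^\top M_n w)\, (v^{1\cdot}w) }.
\]
The numerator is $O(e^{(\lambda_2 + o(1))n})$ since $v^{1\cdot}w$, $v^{1\cdot}x$ are fixed constants. For the denominator I need a matching lower bound $e_i^\top M_n w \ge e^{(\lambda_1 - o(1))n}$ a.s.; this is where one uses that $M_n w = M_{\psi_n}\,(M_n M_{\psi_n}^{-1}\text{-type argument})$ — more precisely, once $M_{\psi_1} > 0$ the vector $M_{\psi_1} w$ is strictly positive, and from then on each coordinate of $M_n w$ is comparable to $\|M_n w\| \asymp \sigma_n^1 \asymp e^{(\lambda_1 + o(1))n}$, with the comparison constant controlled by $\prod \alpha_k$ over the primitivity window; sub-exponentiality of $1/\alpha_n$ (Condition~\ref{cond:E log alpha n}) and $\psi_n = o(n)$ (Lemma~\ref{lem:sigma n tails iid A n}) make this loss $e^{o(n)}$. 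Combining numerator and denominator yields $\limsup \frac1n \log |\cdot| \le \lambda_2 - \lambda_1 = -(\lambda_1 - \lambda_2)$, as claimed.

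The main obstacle I anticipate is the uniform lower bound $e_i^\top M_n w \ge e^{(\lambda_1 - o(1))n}$ a.s.\ for every coordinate $i$: the rank-one approximation only tells us $e_i^\top M_n w \approx \sigma_n^1 (u_n^{\cdot 1})^i v^{1\cdot} w$, and a priori the coordinate $(u_n^{\cdot 1})^i$ of the top left singular vector could be tiny or oscillate, so one cannot simply read the bound off \eqref{eq:Mn SVD Rank 1}. The fix is to argue directly with positivity: write $M_n = A_n \cdots A_{\psi_1 + 1} M_{\psi_1}$, note $M_{\psi_1} w$ has all coordinates bounded below by $(\min_i (M_{\psi_1} w)^i) > 0$, and then use row-allowability plus the Birkhoff/primitivity structure to propagate a lower bound of the form $e_i^\top M_n w \ge c_n \|M_n\|_\infty$ with $\log c_n = o(n)$ a.s., invoking Condition~\ref{cond:E log alpha n} to bound $\log(1/\alpha_k)$ sums and Lemma~\ref{lem:sigma n tails iid A n} to keep $\psi_n$, $\rho_n$ sub-linear. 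A secondary subtlety is that the $o(1)$ terms in \eqref{eq:Mn SVD Sigma n}, \eqref{eq:V1dot_n vs V1dot}, and \eqref{eq:Mn SVD Rank 1} are distinct random sequences; one must take a single event of probability one on which all of them hold simultaneously, which is routine since there are only countably many.
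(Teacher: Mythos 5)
Your overall architecture matches the paper's: reduce to the rank-one approximation \eqref{eq:Mn SVD Rank 1}, do the common-denominator algebra (the leading terms cancel exactly, leaving a numerator of order $e^{(\lambda_2+o(1))n}$), and observe that everything hinges on a lower bound $e_i^\top M_n w \ge e^{(\lambda_1-o(1))n}$ for \emph{every} coordinate $i$, equivalently on $1/u_n^{i1}$ being sub-exponential. You correctly name this as the main obstacle. However, the fix you propose does not work as written. You peel off the strictly positive block at the \emph{wrong end}: writing $M_n = (A_n\cdots A_{\psi_1+1})\,M_{\psi_1}$ and using $M_{\psi_1}w>0$ only reduces the problem to lower-bounding the minimum row sum of the long product $A_n\cdots A_{\psi_1+1}$ relative to its norm, which is exactly the original difficulty restated. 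Worse, ``propagating'' a lower bound forward from time $\psi_1$ by invoking Condition \ref{cond:E log alpha n} accumulates a factor $\prod_{k}\alpha_k$ over $\sim n$ steps; since $\mE\log^-\alpha_1>-\infty$ only gives $\sum_{k\le n}\log(1/\alpha_k)\sim cn$ by the LLN, this loss is genuinely \emph{exponential}, not $e^{o(n)}$. The paper's resolution (Lemma \ref{lem:Mnik vs Mnjk subexp} via Lemma \ref{lem:M=XB}) peels off the positive block on the \emph{left}: $M_n = B_n\tilde M_n$ with $B_n = A_n\cdots A_{n-\rho_n+1}>0$, so that each ratio $M_n^{ik}/M_n^{jk}$ is a convex combination of $B_n^{ir}/B_n^{jr}$ and the $\alpha/\beta$ price is paid only over the \emph{backward} primitivity window of sub-linear length $\rho_n$ (whose finiteness in expectation requires the forward/backward equivalence, Lemma \ref{lem:seq prim forward backward}, and a two-sided extension of the process). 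This is the idea missing from your plan.

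A second, smaller gap: you justify $v^{1\cdot}>0$ by ``Lemma \ref{lem:Oseledec for An positive and x positive} applied in the transposed/backward picture,'' and alternatively suggest assuming $w>0$. Neither suffices. The theorem allows arbitrary $w\ge 0$, $w\ne 0$, so $v^{1\cdot}w>0$ genuinely requires strict positivity of every entry of $v^{1\cdot}$; and Lemma \ref{lem:Oseledec for An positive and x positive} only controls growth rates of $M_nx$ for positive $x$, not positivity of the limiting singular vector (a nonnegative limit of Perron vectors can a priori have zero entries). The paper devotes Lemma \ref{lem:v1cdot positive} to this, combining the SVD of the time-reversed products, the sub-exponentiality of $1/\bar u_n^{i1}$ for the reversed process (i.e.\ Lemma \ref{lem:1 over u1n} again), and a distributional comparison to derive a contradiction from $v^{1i}=0$. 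So both of the genuinely hard ingredients of the proof are present in your outline only as placeholders, and the one you sketch in detail would fail quantitatively.
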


By Theorem \ref{thm:ei Mnx vs ei Mnw IID} for all agents $i$ the values $x_{n}^{i}/w_{n}^{i}$ will converge to the same limit $\pi^T x$ a.s., where $\pi$ is the random vector defined by $\pi={v^{1\cdot}}/{v^{1\cdot} {w}}$, with at least the given rate. 
The limit is random, in contrast to the case of classic push-sum or weighted gossip algorithms without packet loss. On the other hand, there is ample empirical evidence that \emph{decreasing the probability of packet loss} leads to higher concentration of the distribution of $ \pi^T x,$ around $\bar x,$ see \cite{gerencser2018push}.

An extension of the above scenario is obtained if the communicating pairs of agents are chosen according to some time-homogeneous random pattern, which may be different from an i.i.d.\ choice, see geographic gossip, randomized path averaging or one-way averaging, \cite{dimakis2006geographic, benezit2010order, benezit2010weighted}. Thus we come to consider the case when $(A_n)$ is a general, \emph{strictly stationary ergodic} sequence $(A_n)$. As for the additional conditions to be imposed we consider two levels of complexity.

\begin{condition}
\label{cond:BOUNDED ALPHA n BETA n} Let $(A_n)$ be a strictly stationary, ergodic process of random, $p \times p$ non-negative matrices. We say that $(A_n)$ is bounded from below and from above, if there exist $\alpha, \beta > 0$ such that, with the notations of \eqref{eq:def alpha n beta n}, we have a.s.	
\begin{equation}
\label{eq:bounded alpha n beta n}
\alpha_n  \ge \alpha >0, \qquad \beta_n  \le \beta.
\end{equation}
\end{condition}

Again, the above condition is obviously satisfied if the range of $(A_n)$, denoted above by $\cal A,$ is finite.

\begin{theorem}
\label{thm:ei Mnx vs ei Mnw STATIONARY BOUNDED}
Assume that the conditions of Theorem~\ref{thm:barMnx vs barMnw} are satisfied, $ \lambda_1 - \lambda_2 > 0, $ and for the forward index of sequential primitivity $\psi_n$ we have $\mE \psi_n < \infty.$ Furthermore, assume that the positive elements of $A_n$ are bounded from below and from above in the sense of Condition \ref{cond:BOUNDED ALPHA n BETA n}.
Then for any vector of initial values $x \in  \mathbb R^p,$ and any non-negative vector of initial weights $w \in \mathbb R^p_+ $ such that $w \neq 0$ ratio consensus takes place, in fact (\ref{eq:ek Mnx vs ek Mnw}) holds.
\end{theorem}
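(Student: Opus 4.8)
The plan is to reduce the claim to non-negative vectors of initial values and then combine the rank-one approximation \eqref{eq:Mn SVD Rank 1} with a \emph{coordinatewise} growth estimate for $M_n w$. Since both $e_i^\top M_n x/e_i^\top M_n w$ and $v^{1\cdot}x/v^{1\cdot}w$ are linear in $x$, writing $x=x^+-x^-$ with $x^\pm\in\mathbb R_+^p$ reduces \eqref{eq:ek Mnx vs ek Mnw} to the same bound for non-negative, non-zero initial values $y$ (a vanishing $x^\pm$ contributing nothing), after which one subtracts the two estimates using $\limsup\frac1n\log|a_n+b_n|\le\max(\limsup\frac1n\log|a_n|,\limsup\frac1n\log|b_n|)$. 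Everything then rests on the claim that for every $i$ and every $y\ge 0$, $y\ne 0$,
\[
\lim_{n\to\infty}\tfrac1n\log\bigl(e_i^\top M_n y\bigr)=\lambda_1\qquad\text{a.s.}
\]
(the ratios being well defined since $M_n>0$, hence $e_i^\top M_n y>0$, for $n\ge\psi_1$).

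To prove the claim I would first pass to a two-sided strictly stationary extension of $(A_n)$, which by Lemma~\ref{lem:seq prim forward backward} is backward sequentially primitive with $\mE\rho_1=\mE\psi_1<\infty$, so that $\psi_n=o(n)$ and $\rho_n=o(n)$ a.s.\ by Borel--Cantelli. Factoring $M_n=(A_nA_{n-1}\cdots A_{n-\rho_n+1})\,M_{n-\rho_n}$ with a strictly positive first factor whose entries lie in $[\alpha^{\rho_n},(p\beta)^{\rho_n}]$ (Condition~\ref{cond:BOUNDED ALPHA n BETA n}), one reads off $M_n^{i'j}/M_n^{ij}\le(p\beta/\alpha)^{\rho_n}=e^{o(n)}$ for all rows $i,i'$ and all columns $j$; hence the coordinates of $M_n y$ agree with one another up to a factor $e^{o(n)}$, so each is within $e^{o(n)}$ of $|M_n y|=\mathbf 1^\top M_n y$. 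Factoring instead $M_n=(A_n\cdots A_{\psi_1+1})\,M_{\psi_1}$ and sandwiching $c\,M_{\psi_1}\mathbf 1\le M_{\psi_1}y\le C\,M_{\psi_1}\mathbf 1$ componentwise for suitable positive (random, but $n$-independent) constants $c,C$, one gets $c\,M_n\mathbf 1\le M_n y\le C\,M_n\mathbf 1$, whence $\tfrac1n\log|M_n y|=\tfrac1n\log|M_n\mathbf 1|+o(1)\to\lambda_1$ by Lemma~\ref{lem:Oseledec for An positive and x positive} applied to the strictly positive vector $\mathbf 1$. Combining the two steps proves the claim.

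With the claim in hand, \eqref{eq:Mn SVD Rank 1} and $\sigma_n^1=e^{(\lambda_1+o(1))n}$ give, for each fixed $y$,
\[
e_i^\top M_n y=\bigl(e_i^\top u_n^{\cdot 1}\bigr)(v^{1\cdot}y)\,\sigma_n^1+r_n(y),\qquad r_n(y)=O\!\bigl(e^{(\lambda_2+o(1))n}\bigr)\ \text{a.s.}
\]
Taking $y=w$ forces $v^{1\cdot}w\ne 0$ (otherwise $\tfrac1n\log|M_n w|\le\lambda_2<\lambda_1$, contradicting the claim), so $v^{1\cdot}x/v^{1\cdot}w$ is meaningful. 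Since for $n$ large the leading terms corresponding to $y=x^+,x^-,w$ are mutually proportional, the identity $\tfrac{a+r}{b+s}-\tfrac ab=\tfrac{br-as}{b(b+s)}$ yields
\[
\frac{e_i^\top M_n y}{e_i^\top M_n w}-\frac{v^{1\cdot}y}{v^{1\cdot}w}=\frac{(v^{1\cdot}w)\,r_n(y)-(v^{1\cdot}y)\,r_n(w)}{(v^{1\cdot}w)\,(e_i^\top M_n w)},
\]
a numerator of size $O(e^{(\lambda_2+o(1))n})$ over a denominator of size $e^{(\lambda_1+o(1))n}$ (times a non-zero constant), hence $O(e^{-(\lambda_1-\lambda_2+o(1))n})$. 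Subtracting the $y=x^+$ and $y=x^-$ instances gives \eqref{eq:ek Mnx vs ek Mnw}, and since the limit $v^{1\cdot}x/v^{1\cdot}w$ does not depend on $i$, ratio consensus holds.

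The main obstacle is the coordinatewise estimate of the second paragraph: Theorem~\ref{thm:barMnx vs barMnw} controls only the normalized probability vectors $\bar x_n,\bar w_n$, whereas here one needs that \emph{no single coordinate} of $M_n w$ lags behind the maximal exponent $\lambda_1$ — equivalently, that the Perron-type direction $e_i^\top u_n^{\cdot1}$ does not decay exponentially. This is exactly what the hypotheses beyond those of Theorem~\ref{thm:barMnx vs barMnw} provide: Condition~\ref{cond:BOUNDED ALPHA n BETA n} makes the positive entries of short primitive products mutually comparable, sequential primitivity (forward, and backward via the two-sided extension and Lemma~\ref{lem:seq prim forward backward}) guarantees that such products occur, and $\mE\psi_1<\infty$ makes the indices $\psi_n,\rho_n$ sub-linear so that the crude factors $(p\beta/\alpha)^{\rho_n}$ and the like are absorbed into $e^{o(n)}$.
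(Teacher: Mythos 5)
Your proposal is correct, and its engine is the same as the paper's: pass to a two-sided extension, use backward sequential primitivity with $\mE\rho_1=\mE\psi_1<\infty$ (Lemma \ref{lem:seq prim forward backward}) to get $\rho_n=o(n)$, factor $M_n=(A_n\cdots A_{n-\rho_n+1})M_{n-\rho_n}$ with a strictly positive left factor whose entries Condition \ref{cond:BOUNDED ALPHA n BETA n} confines to $[\alpha^{\rho_n},(p\beta)^{\rho_n}]$, and conclude via the convex-combination (Bellman) argument that $M_n^{ik}/M_n^{jk}=e^{o(n)}$ — this is precisely the paper's Condition \ref{cond:Mnik vs Mnjk subexp}, verified in Lemma \ref{lem:Mnik vs Mnjk subexp}, whose bounded case collapses to the same one-line estimate. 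Where you genuinely diverge is the endgame. The paper turns row-comparability into sub-exponentiality of $1/u_n^{i1}$ (Lemma \ref{lem:1 over u1n}) and then proves $v^{1i}>0$ for \emph{all} $i$ by a fairly delicate time-reversal of the SVD (Lemma \ref{lem:v1cdot positive}) before dividing the rank-one expansions. You instead prove directly that $\frac1n\log(e_i^\top M_n y)\to\lambda_1$ for every coordinate $i$ and every $y\ge 0$, $y\ne 0$, by sandwiching $c\,M_n\mathbf 1\le M_n y\le C\,M_n\mathbf 1$ (valid: $M_{\psi_1}>0$ makes $M_{\psi_1}y$ and $M_{\psi_1}\mathbf 1$ componentwise comparable, and left multiplication by non-negative matrices preserves the sandwich) together with Lemma \ref{lem:Oseledec for An positive and x positive} applied to $\mathbf 1$ and the row-comparability; the needed non-degeneracy $v^{1\cdot}w\ne 0$ then drops out as a corollary instead of requiring a separate lemma, and only for the particular $w$ at hand. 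Your cancellation identity is algebraically exact whenever $e_i^\top M_n w\ne 0$ and $v^{1\cdot}w\ne 0$ (it does not even need $u_n^{i1}\ne 0$), and the splitting $x=x^+-x^-$ is harmless though unnecessary, since the identity holds for arbitrary $x$. What your route buys is the complete avoidance of Lemma \ref{lem:v1cdot positive}; what it gives up is the componentwise positivity $v^{1\cdot}>0$, which the paper reuses elsewhere. One presentational point: when writing $M_n=(A_n\cdots A_{n-\rho_n+1})M_{n-\rho_n}$ you should restrict to the a.s.\ eventually occurring event $\{\rho_n\le n\}$ so that the right factor is a genuine partial product of the one-sided sequence, as the paper does with its sets $\Omega^G_n$.
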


A further extension of the above result is obtained if the elements of $A_n$ are \emph{not bounded} from above and from below, thus allowing for the possibility of moving a random fraction of values and weights. In this case we need an extra technical condition ensuring some kind of mixing of the process $(A_n).$

\begin{condition}
\label{cond:M MIXING} 
A two-sided strictly stationary process $(\xi_n)$ satisfies a $q$-th order $M$-mixing condition, with $q \ge 1,$ if $\mE |\xi_n|^q < \infty,$ and for any positive integer $N$ we have, with some constant $C>0,$
\begin{equation}
\mE  \Big \vert \sum_{n=1}^N (\xi_n - \mE \xi_n )  \Big \vert^q \le C N^{q/2}.
\end{equation}

\end{condition}

\begin{theorem}
	\label{thm:ei Mnx vs ei Mnw STATIONARY GEN}
	Assume that the conditions of Theorem~\ref{thm:barMnx vs barMnw} are satisfied, $ \lambda_1 - \lambda_2 > 0,$ and for the index of forward sequential primitivity $\psi_n$ we have $\mE \psi_n < \infty.$ Furthermore, assume that $
	a_n= \log \alpha_n$ and $b_n = \log \beta_n$ satisfy a $q$-th order $M$-mixing condition, given in Condition \ref{cond:M MIXING}, with some $q>4$.
	Then for any vector of initial values $x \in  \mathbb R^p,$ and any non-negative vector of initial weights $w \in \mathbb R^p_+ $ such that $w \neq 0$ ratio consensus takes place, in fact (\ref{eq:ek Mnx vs ek Mnw}) holds.
\end{theorem}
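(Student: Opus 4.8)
The plan is to reuse the scheme of the proof of Theorem~\ref{thm:ei Mnx vs ei Mnw STATIONARY BOUNDED}, replacing the two consequences of the boundedness Condition~\ref{cond:BOUNDED ALPHA n BETA n} that are actually used there by consequences of the $M$-mixing Condition~\ref{cond:M MIXING}. First I would use linearity of $x\mapsto M_nx$ and $x\mapsto v^{1\cdot}x$ to split $x=x^+-x^-$ into non-negative and non-positive parts, reducing everything to $x\in\mathbb R^p_+$. Then, since $\lambda_1>\lambda_2$, I would invoke Oseledec's theorem and Raghunathan's refinements \eqref{eq:V1dot_n vs V1dot}--\eqref{eq:Mn SVD Rank 1} to get the rank-one expansion $M_n=\sigma^1_n\,u_n^{\cdot1}v^{1\cdot}+O(e^{(\lambda_2+o(1))n})$ a.s., with $v_n^{1\cdot}\to v^{1\cdot}$, $\|u_n^{\cdot1}\|=1$, $\sigma^1_n=e^{(\lambda_1+o(1))n}$. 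Substituting this into
\[
\frac{e_i^\top M_n x}{e_i^\top M_n w}-\frac{v^{1\cdot}x}{v^{1\cdot}w}
=\frac{(v^{1\cdot}w)\,e_i^\top M_n x-(v^{1\cdot}x)\,e_i^\top M_n w}{(v^{1\cdot}w)\,e_i^\top M_n w},
\]
the leading terms $\sigma^1_n u_n^{i1}(v^{1\cdot}w)(v^{1\cdot}x)$ cancel in the numerator, leaving a numerator of order $O(e^{(\lambda_2+o(1))n})$. Thus the theorem will follow once one establishes (i) $v^{1\cdot}>0$ a.s., so that $v^{1\cdot}w$ is a fixed positive number for every $w\in\mathbb R^p_+\setminus\{0\}$, and (ii) the coordinatewise lower bound
\begin{equation}
\label{eq:coordwise lower bound gen}
\liminf_{n\to\infty}\frac1n\log\big(e_i^\top M_n w\big)\ \ge\ \lambda_1,\qquad i=1,\dots,p,\quad\text{a.s.}
\end{equation}

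Claim (i) I would obtain exactly as in the proofs of Theorems~\ref{thm:ei Mnx vs ei Mnw IID} and \ref{thm:ei Mnx vs ei Mnw STATIONARY BOUNDED}: writing $\theta$ for the time-shift, so that $v^{1\cdot}\circ\theta$ is the leading right-singular direction of $A_nA_{n-1}\cdots A_2$, a short computation with the rank-one expansion (using uniqueness of the leading singular direction, valid since $\lambda_1>\lambda_2$) gives a semiconjugacy $v^{1\cdot}=c\,(v^{1\cdot}\circ\theta)\,A_1$ with a random $c>0$; iterating $\psi_1$ times yields $v^{1\cdot}=c'\,(v^{1\cdot}\circ\theta^{\psi_1})\,M_{\psi_1}$ with $c'>0$, and since $M_{\psi_1}>0$ while $v^{1\cdot}\circ\theta^{\psi_1}$ is a non-zero non-negative vector (after the usual sign normalisation of the singular vectors of a strictly positive product), one concludes $v^{1\cdot}>0$. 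This step uses only the positivity of $M_{\psi_1}$, not the sizes of its entries, so it is unaffected by the weakening of the hypotheses.

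For claim (ii) I would pass, as indicated before Lemma~\ref{lem:seq prim forward backward}, to a two-sided strictly stationary extension of $(A_n)$, on which backward sequential primitivity holds with $\mE\rho_1=\mE\psi_1<\infty$; in particular $\rho_n=o(n)$ a.s., and for all large $n$ the window $\{n-\rho_n+1,\dots,n\}$ lies inside $\{1,\dots,n\}$. Setting $m=m(n)=n-\rho_n$, the block $B:=A_nA_{n-1}\cdots A_{m+1}$ is strictly positive, and since each of its entries is a sum over paths of products of one positive entry from each factor, every entry of $B$ is at least $\prod_{l=m+1}^n\alpha_l$; hence, using $M_mw\ge0$,
\[
e_i^\top M_n w=e_i^\top B\,M_m w\ \ge\ \Big(\prod_{l=m+1}^n\alpha_l\Big)\,\mathbf 1^\top M_m w\ \ge\ \frac1{\sqrt p}\Big(\prod_{l=m+1}^n\alpha_l\Big)\,\|M_m w\|.
\]
Because $v^{1\cdot}>0$ and $w\in\mathbb R^p_+\setminus\{0\}$ we have $v^{1\cdot}w>0$, so $w\notin L_{\mu_2}$ and $\tfrac1m\log\|M_m w\|\to\lambda_1$ (Lemma~\ref{lem:Oseledec for An positive and x positive}, or directly from the rank-one expansion). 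This is the one place where Condition~\ref{cond:M MIXING} is used: Chebyshev's inequality applied to its moment bound for $a_n=\log\alpha_n$, together with Borel--Cantelli (the exponent $q>4$ leaving ample room for the needed partial-sum estimates), gives the strong law $\tfrac1N\sum_{l=1}^N\log\alpha_l\to\mE\log\alpha_1$ a.s., and since $\rho_n=o(n)$ forces $m/n\to1$ this yields $\tfrac1n\sum_{l=m+1}^n\log\alpha_l\to0$ a.s.; combining the three displayed limits proves \eqref{eq:coordwise lower bound gen}. In the bounded case one instead has $\prod_{l=m+1}^n\alpha_l\ge\alpha^{\rho_n}=e^{o(n)}$ trivially, which is precisely where Theorem~\ref{thm:ei Mnx vs ei Mnw STATIONARY BOUNDED} invokes Condition~\ref{cond:BOUNDED ALPHA n BETA n}. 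Finally \eqref{eq:coordwise lower bound gen}, together with $v^{1\cdot}w>0$ and the $O(e^{(\lambda_2+o(1))n})$ numerator, gives $\big|e_i^\top M_nx/e_i^\top M_nw-v^{1\cdot}x/v^{1\cdot}w\big|=O(e^{-(\lambda_1-\lambda_2+o(1))n})$ a.s., i.e.\ \eqref{eq:ek Mnx vs ek Mnw}.

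The hard part is \eqref{eq:coordwise lower bound gen}: one must show that \emph{every} coordinate of $M_n w$ grows at the full top rate $e^{\lambda_1 n}$, equivalently that no row of $M_n$ and no coordinate of the leading left singular vector $u_n^{\cdot1}$ is exponentially subdominant. Dropping the lower bound on $\alpha_n$ makes this delicate, since the positive entries of a primitive block could a priori be extremely small; the resolution is to combine the existence of a recent primitive block of sub-linear length $\rho_n=o(n)$ (backward sequential primitivity and $\mE\psi_1<\infty$) with the control that Condition~\ref{cond:M MIXING} provides on the partial sums of $\log\alpha_n$ over that block. Everything else is an essentially verbatim repetition of the bounded case.
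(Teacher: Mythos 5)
Your proposal is correct, but it reaches the conclusion by a genuinely different route than the paper, in both of its technical pillars. The paper funnels Theorems \ref{thm:ei Mnx vs ei Mnw IID}--\ref{thm:ei Mnx vs ei Mnw STATIONARY GEN} through Condition \ref{cond:Mnik vs Mnjk subexp} (sub-exponentiality of the row ratios $M_n^{ik}/M_n^{jk}$), verified in Lemma \ref{lem:Mnik vs Mnjk subexp} by sandwiching $B_n^{ir}/B_n^{jr}$ between $\prod\alpha_m/\prod\beta'_m$ and its reciprocal over the backward-primitive window and controlling the resulting sums with the maximal-function Lemma \ref{lem:E Risk for M mixing} --- this two-sided control of both $\log\alpha_n$ and $\log\beta_n$ is where the $q>4$ $M$-mixing is actually exploited; it then deduces that $1/u_n^{i1}$ is sub-exponential (Lemma \ref{lem:1 over u1n}) and that $v^{1\cdot}>0$ via the time-reversed transposed process $A_{-n}^\top$ and a distributional contradiction (Lemma \ref{lem:v1cdot positive}). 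You instead (a) prove $v^{1\cdot}>0$ from the shift-equivariance of the top singular direction together with $M_{\psi_1}>0$, and (b) lower-bound every coordinate of $M_nw$ directly by $\bigl(\prod_{l=n-\rho_n+1}^{n}\alpha_l\bigr)\,\mathbf 1^\top M_{n-\rho_n}w$, which needs only a \emph{one-sided} bound on the entries of the primitive block. Both steps hold up: for (a), the clean way to justify the semiconjugacy you invoke is via $L_{\mu_2}(\omega)=(v^{1\cdot}(\omega))^{\perp}$ and the inclusion $A_1L_{\mu_2}(\omega)\subseteq L_{\mu_2}(\theta\omega)$, the scalar being nonzero since otherwise $A_1\mathbb R^p\subseteq L_{\mu_2}(\theta\omega)$ would force every vector to grow at rate at most $\lambda_2$, contradicting $\lambda_1>\lambda_2$; this is the least standard and most compressed part of your write-up and deserves to be spelled out. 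What your route buys is economy: since you only need $\frac1n\sum_{l=n-\rho_n+1}^{n}\log\alpha_l\to 0$, the $M$-mixing hypothesis enters solely through a strong law of large numbers for $\log\alpha_n$, which already follows from Birkhoff's ergodic theorem once $\mE|\log\alpha_1|<\infty$; so your argument in fact establishes the theorem under hypotheses essentially at the level of Condition \ref{cond:E log alpha n} plus $\mE\psi_1<\infty$, without the full force of Condition \ref{cond:M MIXING}. What the paper's heavier machinery buys in exchange is the two-sided collinearity estimate on the rows of $M_n$, which is reused elsewhere (notably in Theorem \ref{thm:Birkhoff le -gap a.s.} on the Birkhoff contraction coefficient) and is not delivered by your one-sided bound.
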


It may be of interest to consider an estimate of the average at any time $n$ by taking a weighted average of the respective values of $x_{n}^{i}$ and $w_{n}^{i}.$ In this case Theorems \ref{thm:ei Mnx vs ei Mnw IID}, \ref{thm:ei Mnx vs ei Mnw STATIONARY BOUNDED}, \ref{thm:ei Mnx vs ei Mnw STATIONARY GEN} easily generalize to the following:

\begin{corollary}
		\label{cor:q Mnx vs q Mnw}
		Let $q \in \mathbb R^p_+, q \neq 0$ be a non-negative weight vector. 
		Assume that any of the sets of condtions of Theorems \ref{thm:ei Mnx vs ei Mnw IID}, \ref{thm:ei Mnx vs ei Mnw STATIONARY BOUNDED} or \ref{thm:ei Mnx vs ei Mnw STATIONARY GEN} is satisfied. 
		Then for any vector of initial values $x \in  \mathbb R^p,$ and any non-negative vector of initial weights $w \in \mathbb R^p_+ $ such that $w \neq 0$ we have
		\begin{equation}
		\limsup_{n \rightarrow \infty} {\frac 1 n} \log \left \vert \frac{q^\top M_n x}{q^\top M_n w}  - \frac{v^{1\cdot}x}{v^{1\cdot}w} \right \vert \le -(\lambda_1 - \lambda_2) \quad {\rm a.s.}
		\end{equation}
\end{corollary}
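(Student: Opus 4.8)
\emph{Plan.} The idea is to recognize $q^\top M_n x / q^\top M_n w$ as a convex combination of the per-node ratios $e_i^\top M_n x / e_i^\top M_n w$ treated in Theorems~\ref{thm:ei Mnx vs ei Mnw IID}, \ref{thm:ei Mnx vs ei Mnw STATIONARY BOUNDED}, \ref{thm:ei Mnx vs ei Mnw STATIONARY GEN}, and then to reduce the claim to those theorems applied coordinate-wise. First I would observe that under sequential primitivity the stopping time $\psi_1$ is a.s.\ finite, so $M_n > 0$ for every $n \ge \psi_1$; since $w \ge 0$, $w \neq 0$, this gives $e_i^\top M_n w > 0$ for each $i$, and since $q \ge 0$, $q \neq 0$, also $q^\top M_n w = \sum_{i=1}^p q^i\, e_i^\top M_n w > 0$. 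Hence for all $n \ge \psi_1$ the quantities
\begin{equation*}
\theta_{n,i} := \frac{q^i\, e_i^\top M_n w}{\sum_{j=1}^p q^j\, e_j^\top M_n w}, \qquad i=1,\dots,p,
\end{equation*}
are well defined, non-negative, and sum to $1$.

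Next, from $q^\top M_n x = \sum_{i=1}^p q^i (e_i^\top M_n w)\cdot \big(e_i^\top M_n x\big)/\big(e_i^\top M_n w\big)$ one gets the convex-combination identity
\begin{equation*}
\frac{q^\top M_n x}{q^\top M_n w} = \sum_{i=1}^p \theta_{n,i}\, \frac{e_i^\top M_n x}{e_i^\top M_n w},
\end{equation*}
and therefore, subtracting the coordinate-independent limit $v^{1\cdot}x/v^{1\cdot}w$ and using $\sum_i \theta_{n,i}=1$,
\begin{equation*}
\left| \frac{q^\top M_n x}{q^\top M_n w} - \frac{v^{1\cdot}x}{v^{1\cdot}w} \right|
\le \sum_{i=1}^p \theta_{n,i} \left| \frac{e_i^\top M_n x}{e_i^\top M_n w} - \frac{v^{1\cdot}x}{v^{1\cdot}w} \right|
\le \max_{1 \le i \le p} \left| \frac{e_i^\top M_n x}{e_i^\top M_n w} - \frac{v^{1\cdot}x}{v^{1\cdot}w} \right|.
\end{equation*}

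Finally I would take $\tfrac1n\log$ of both sides and let $n\to\infty$. Whichever of the three sets of hypotheses is in force, the corresponding theorem among Theorems~\ref{thm:ei Mnx vs ei Mnw IID}, \ref{thm:ei Mnx vs ei Mnw STATIONARY BOUNDED}, \ref{thm:ei Mnx vs ei Mnw STATIONARY GEN} gives, for each fixed $i$, $\limsup_n \tfrac1n\log\big| e_i^\top M_n x/e_i^\top M_n w - v^{1\cdot}x/v^{1\cdot}w \big| \le -(\lambda_1-\lambda_2)$ a.s.; intersecting these finitely many probability-one events and using that the $\limsup$ of a maximum of finitely many sequences equals the maximum of their $\limsup$s yields the stated bound.

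I do not expect a genuine obstacle. The only points that need (routine) care are the strict positivity of the denominators $q^\top M_n w$ and $e_i^\top M_n w$ for large $n$, which is precisely what sequential primitivity and row-allowability provide, and the passage from the per-node estimates to the aggregate one, which is a finite-union argument. The weights $\theta_{n,i}$ may oscillate with $n$, but this is harmless: they are bounded by $1$ and disappear after the convexity estimate.
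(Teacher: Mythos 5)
Your proof is correct and follows essentially the same route as the paper: the paper likewise writes $q^\top M_n x/q^\top M_n w$ as the convex combination $\sum_i (a_i/b_i)\, q_i b_i/\sum_j q_j b_j$ with $a_i=e_i^\top M_n x$, $b_i=e_i^\top M_n w$, obtains the min--max sandwich \eqref{eq:min vs max conv}, and concludes from Theorems \ref{thm:ei Mnx vs ei Mnw IID}, \ref{thm:ei Mnx vs ei Mnw STATIONARY BOUNDED}, \ref{thm:ei Mnx vs ei Mnw STATIONARY GEN}. Your explicit bound by $\max_i\bigl|e_i^\top M_n x/e_i^\top M_n w - v^{1\cdot}x/v^{1\cdot}w\bigr|$ and the positivity check of the denominators are just slightly more spelled-out versions of the same argument.
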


	\begin{proof} [Proof of Corollary \ref{cor:q Mnx vs q Mnw}.] The claim is obtained by a direct and standard convexity argument, see \cite{benezit2010weighted}: for any pair of vectors $a,b \in \mathbb R^p$ such that $b > 0 $ we have 
		\begin{equation}
		\min_i \frac {a_i}{b_i} \le \frac{q^\top a}{q^\top b} \le \max_i \frac {a_i}{b_i}.
		\end{equation}	
		Indeed, this follows from 
		\begin{equation}
		\frac{q^\top a}{q^\top b} = \frac {\sum_i q_i a_i}  {\sum_i q_i b_i} =
		\sum_{i} \left ({\frac {a_i} {b_i}}\right) ~\frac { q_i b_i }  {\sum_j q_j b_j}. 
		\end{equation}
		Setting $a_i=e_i^\top M_n x$ and $b_i=e_i^\top M_nw$ we get 
		\begin{equation}
		\label{eq:min vs max conv}
		\min_i \frac {e_i^\top M_n x}{e_i^\top M_n w} \le \frac{q^\top M_n x}{q^\top M_n w} \le \max_i \frac {e_i^\top M_n x}{e_i^\top M_n w},
		\end{equation}		
		from which the claim follows by Theorems \ref{thm:ei Mnx vs ei Mnw IID}, \ref{thm:ei Mnx vs ei Mnw STATIONARY BOUNDED}, \ref{thm:ei Mnx vs ei Mnw STATIONARY GEN}.
	 	\end{proof}

Let the l.h.s. and the r.h.s. of \eqref{eq:min vs max conv} be denoted by $y_n$ and $z_n,$ respectively. The elementary lemma below, which will be used later on, has been established in \cite{gerencser2018push} for the case of the push-sum algorithm with packet loss:

\begin{lemma}
	\label{lem:min ratio and max ratio monotone}
	The values $y_n$ and $z_n$ are monotone non-decreasing and non-increasing, respectively. In particular, it follows that for any time $n$ we have
		\begin{equation}
		\label{eq:min vs max limit}
		\min_i \frac {e_i^\top M_n x} {e_i^\top M_n w} \le \frac{v^{1\cdot}x}{v^{1\cdot}w} \le  \max_i \frac {e_i^\top M_n x}{e_i^\top M_n w} \qquad {\rm a.s.}
		\end{equation}
\end{lemma}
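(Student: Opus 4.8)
The plan is to prove monotonicity of $y_n=\min_i e_i^\top M_n x/e_i^\top M_n w$ and $z_n=\max_i e_i^\top M_n x/e_i^\top M_n w$ by exploiting the fact that $M_{n+1}=A_{n+1}M_n$, so that the value and weight vectors at step $n+1$ are obtained from those at step $n$ by a single nonnegative, row-allowable matrix multiplication. The key observation is the convexity inequality already invoked in the proof of Corollary~\ref{cor:q Mnx vs q Mnw}: for any nonnegative row vector $r\ge 0$, $r\neq 0$, and vectors $a,b$ with $b>0$, one has $\min_i a_i/b_i \le r a / r b \le \max_i a_i/b_i$. First I would fix a row index $k$ and apply this with $r=e_k^\top A_{n+1}$, $a=M_n x$, $b=M_n w$: since $A_{n+1}$ is nonnegative and row-allowable, $e_k^\top A_{n+1}$ is a nonnegative nonzero row vector, and since $e_i^\top M_n w>0$ for all $i$ (the $A_j$ are allowable and $w\neq 0$), the hypotheses are met. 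This gives
\begin{equation}
y_n=\min_i \frac{e_i^\top M_n x}{e_i^\top M_n w}\le \frac{e_k^\top A_{n+1}M_n x}{e_k^\top A_{n+1}M_n w}=\frac{e_k^\top M_{n+1} x}{e_k^\top M_{n+1} w}\le \max_i \frac{e_i^\top M_n x}{e_i^\top M_n w}=z_n.
\end{equation}

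Next, taking the minimum over $k$ on the left of the displayed chain yields $y_n\le y_{n+1}$, and taking the maximum over $k$ on the right yields $z_{n+1}\le z_n$; this establishes that $(y_n)$ is non-decreasing and $(z_n)$ is non-increasing. One small point to note is that for $n<\psi_1$ the vector $M_n x$ need not be positive, so the roles of $a$ and $b$ are asymmetric — but the inequality only requires $b=M_n w>0$, which always holds, so the argument goes through for every $n\ge 1$ without modification. Since by construction $y_n\le z_n$ for every $n$, the two monotone sequences are bounded and hence convergent; by Theorem~\ref{thm:ei Mnx vs ei Mnw IID} (or its stationary analogues, under whichever hypothesis set is in force) both $y_n$ and $z_n$ converge a.s.\ to the common limit $v^{1\cdot}x/v^{1\cdot}w$, and monotonicity then forces $y_n\le v^{1\cdot}x/v^{1\cdot}w\le z_n$ for every finite $n$, which is exactly \eqref{eq:min vs max limit}.

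I do not anticipate a genuine obstacle here: the entire argument is a one-line application of the convexity bound to each row of $A_{n+1}$, followed by taking extrema. The only thing requiring a moment's care is making sure the denominators are strictly positive so that the ratios and the convexity inequality are well defined — this is guaranteed by allowability of the $A_n$ together with $w\ge 0$, $w\neq 0$ — and citing the appropriate one of Theorems~\ref{thm:ei Mnx vs ei Mnw IID}--\ref{thm:ei Mnx vs ei Mnw STATIONARY GEN} for the identification of the common limit, which is legitimate since the lemma is stated in the context where one of those hypothesis sets holds.
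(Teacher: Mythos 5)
Your proposal is correct and follows essentially the same route as the paper: both apply the convexity inequality \eqref{eq:min vs max conv} with $q_j^\top = e_j^\top A_{n+1}$ (nonzero since $A_{n+1}$ is non-negative and allowable) to get $y_n \le h_{n+1,j} \le z_n$ for every row $j$, take extrema over $j$ to obtain monotonicity, and then combine monotonicity with the a.s.\ convergence of both extremal ratios to $v^{1\cdot}x/v^{1\cdot}w$ from Theorem~\ref{thm:ei Mnx vs ei Mnw IID} (or its stationary analogues) to conclude \eqref{eq:min vs max limit}. The only caveat — strict positivity of the denominators $e_i^\top M_n w$ for small $n$ — is glossed over at the same level of detail in the paper itself, so there is no substantive difference.
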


\begin{proof} [Proof of Lemma \ref{lem:min ratio and max ratio monotone}]
		Indeed, for any index $j$ write 
		\begin{equation}
		h_{n+1,j} := \frac{e_j^\top M_{n+1} x}{e_j^\top M_{n+1} w} = \frac{e_j^\top A_{n+1} M_{n} x}{e_j^\top A_{n+1} M_{n} w} = \frac{q_j^\top M_{n} x}{q_j^\top M_{n} w}
		\end{equation}
		with $q_j^\top=e_j^\top A_{n+1}.$ Since $A_{n+1}$ is non-negative and allowable, we have $q_j \ge 0, q_j \neq 0.$ Thus we get by \eqref{eq:min vs max conv} the inequality $y_n \le h_{n+1,j} \le z_n $ for all $j$ from which the first claim follows. The second claim follows trivially from the established monotonicity, and the fact that, according to Theorem \ref{thm:ei Mnx vs ei Mnw IID}, we have a.s.
		\begin{equation*}
		\lim_{n \rightarrow \infty} \min_i \frac {e_i^\top M_n x} {e_i^\top M_n w} =  \frac{v^{1\cdot}x}{v^{1\cdot}w} = \lim_{n \rightarrow \infty} \max_i \frac {e_i^\top M_n x}{e_i^\top M_n w}. 
		\end{equation*}
\end{proof}

In the special case when $A_n$ is column stochastic for all $n$, as in the case of the push-sum or weighted gossip algorithm with no packet loss, $M_n$ will be column-stochastic for all $n.$ It follows that $\Vert M_n \Vert$ is bounded from above and bounded away from $0,$ hence it readily follows that for the top-Lyapunov exponent we have $\lambda_1 = 0,$ and we obtain the following result:

	\begin{theorem}
		\label{thm:ei Mnx vs ei Mnw COLUMN STOCH}
		Assume that any of the sets of conditions of Theorems \ref{thm:ei Mnx vs ei Mnw IID}, \ref{thm:ei Mnx vs ei Mnw STATIONARY BOUNDED} or \ref{thm:ei Mnx vs ei Mnw STATIONARY GEN} is satisfied, and in addition $A_n$ is column-stochastic for all $n.$
		Then for any vector of initial values $x \in  \mathbb R^p,$ and any non-negative vector of initial weights $w \in \mathbb R^p_+ $ such that $w \neq 0$ we have for all $i$ a.s.
		\begin{equation*}
			\limsup_{n \rightarrow \infty} {\frac 1 n} \log \left \vert \frac{e_i^\top M_n x}{e_i^\top M_n w}  - \frac{ {\mathbf 1}^\top x}{  {\mathbf 1}^\top w} \right \vert \le \lambda_2 < 0.
		\end{equation*}
	\end{theorem}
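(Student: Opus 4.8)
The plan is to obtain Theorem~\ref{thm:ei Mnx vs ei Mnw COLUMN STOCH} as a direct specialization of whichever of Theorems~\ref{thm:ei Mnx vs ei Mnw IID}, \ref{thm:ei Mnx vs ei Mnw STATIONARY BOUNDED} or \ref{thm:ei Mnx vs ei Mnw STATIONARY GEN} is assumed, by evaluating explicitly, in the column-stochastic case, the two quantities occurring in \eqref{eq:ek Mnx vs ek Mnw}: the rate $\lambda_1-\lambda_2$ and the limit $v^{1\cdot}x/v^{1\cdot}w$.

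First I would note that column stochasticity propagates through the product: since $\mathbf 1^\top A_n=\mathbf 1^\top$ for all $n$, an immediate induction gives $\mathbf 1^\top M_n=\mathbf 1^\top$, while the entries of $M_n$ remain in $[0,1]$. Hence $1/p\le\Vert M_n\Vert\le\sqrt p$ (the lower bound because each column of $M_n$ contains an entry of size at least $1/p$), so by Proposition~\ref{prop:FK} the top Lyapunov exponent is $\lambda_1=\lim\frac1n\log\Vert M_n\Vert=0$. Since each of the three invoked theorems carries the hypothesis $\lambda_1-\lambda_2>0$, this forces $\lambda_2=-(\lambda_1-\lambda_2)<0$, which matches both the exponent and the sign asserted in the statement.

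It then remains to identify the random limit in \eqref{eq:ek Mnx vs ek Mnw}. For this I would invoke the rank-one asymptotics \eqref{eq:Mn SVD Rank 1}, which under $\lambda_1=0>\lambda_2$ becomes $M_n=\sigma^1_n u^{\cdot 1}_n v^{1\cdot}+O(e^{(\lambda_2+o(1))n})$ with $\sigma^1_n=e^{o(1)n}$. Summing the $j$-th column of $M_n$ and using $\mathbf 1^\top M_n=\mathbf 1^\top$ gives $\sigma^1_n(\mathbf 1^\top u^{\cdot 1}_n)\,v^{1j}=1+O(e^{(\lambda_2+o(1))n})$ for every coordinate $j$. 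Taking the ratio of two such relations shows $v^{1j}/v^{1k}\to 1$; but $v^{1\cdot}$ is a fixed (random) vector, so in fact all coordinates of $v^{1\cdot}$ coincide, i.e.\ $v^{1\cdot}=c\,\mathbf 1^\top$ for an a.s.\ nonzero scalar $c=c(\omega)$. Consequently $v^{1\cdot}x/v^{1\cdot}w=\mathbf 1^\top x/\mathbf 1^\top w$, and substituting this together with $\lambda_1-\lambda_2=-\lambda_2$ into \eqref{eq:ek Mnx vs ek Mnw} produces exactly the claimed bound.

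The only slightly delicate point is the identification $v^{1\cdot}=c\,\mathbf 1^\top$: one must make sure the error terms in \eqref{eq:Mn SVD Rank 1} are genuinely negligible relative to $\sigma^1_n$ (which holds because $\lambda_2<0=\lambda_1$) so that the coordinate ratios of $v^{1\cdot}$ tend to $1$ and are therefore exactly $1$. An equivalent, perhaps cleaner, route transposes the approximation and uses $M_n^\top\mathbf 1=\mathbf 1$ to see directly that $(v^{1\cdot})^\top$ is asymptotically --- hence exactly --- proportional to $\mathbf 1$; beyond this, the argument is just bookkeeping already done in the proofs of the cited theorems.
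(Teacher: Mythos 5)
Your proposal is correct and follows essentially the same route as the paper's proof: deduce $\lambda_1=0$ from the boundedness of $\Vert M_n\Vert$ above and away from zero for column-stochastic products, then combine the rank-one asymptotics \eqref{eq:Mn SVD Rank 1} with $\mathbf 1^\top M_n=\mathbf 1^\top$ to force $v^{1\cdot}=c\,\mathbf 1^\top$, and substitute into \eqref{eq:ek Mnx vs ek Mnw}. The only cosmetic difference is that you extract the proportionality coordinate-wise by taking ratios, whereas the paper divides the vector identity $\mathbf 1^\top=\mathbf 1^\top u^{\cdot 1}_{n}v^{1\cdot}\sigma^{1}_{n}+O(e^{(\lambda_2+o(1))n})$ by the bounded scalar $\mathbf 1^\top u^{\cdot 1}_{n}\,\sigma^{1}_{n}$ and passes to the limit.
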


Choosing $w= {\mathbf  1},$ Theorem \ref{thm:ei Mnx vs ei Mnw COLUMN STOCH} implies that ratio consensus will take place in the classic sense: for all agents $k$ the values $x_{n}^{k}/w_{n}^{k}$ will converge to the same non-random limit $\bar x = \sum_{i=1}^p x^i_0/p$, with at least the given rate.

\begin{remark}
	It may come as a pleasing surprise that the a.s. rate of convergence for weighted gossip algorithms provided by Theorems \ref{thm:ei Mnx vs ei Mnw COLUMN STOCH} is identical with the a.s. rate of convergence of a class of \emph{linear gossip} algorithms, described in \cite{picci2013almost}, defined via a strictly stationary ergodic edge process. By Theorem 5.2 of \cite{picci2013almost}, with $A_n$ denoting the associated {\emph {doubly stochastic}} matrices, we have for any $x \in \mathbb R^p$ and any $i$
	\begin{equation}
	\label{eq:linear gossip rate}
	\limsup_{n \rightarrow \infty} {\frac 1 n} \log \vert e_i^\top A_n \cdot \ldots \cdot A_1 x -  {\frac {\mathbf 1^T x} {p}} \vert \le \lambda_2 \quad {\rm a.s.}
	\end{equation}
	We note that an extension of this result can be easily derived from the proof of Theorem \ref{thm:ei Mnx vs ei Mnw COLUMN STOCH}: assuming the additional condition that $A_n$ is \emph{doubly stochastic} for all $n$ inequality \eqref{eq:linear gossip rate} holds. Unfortunately the problem of deciding if $\lambda_2 < 0$ is generally not only NP hard, but undecidable \cite{tsitsiklis1997lyapunov}, \cite{blondel2000survey}.

An upper bound for the rate of a.s. exponential convergence of an appropriately sampled process $x^i_{\tau_n}/w^i_{\tau_n},$ generated by the push-sum or weighted gossip algorithms, was derived in  \cite{iutzeler2013analysis} assuming, among others, that $(A_n)$ is i.i.d. and   column-stochastic. 
These upper bounds for the rate, obtained via the analysis of the mean squared error of $A_n \cdots A_1 \cdot (I - {\mathbf 1}{\mathbf 1}^\top/p)$, are given by $\kappa = - {\frac 1 2} \log \rho(R)$, with $\rho(\cdot)$ denoting the spectral radius, where 
	$$
	R = {{\mathbb  E}} [A_1 \otimes  A_1] \cdot ((I - {\mathbf 1}{\mathbf 1}^\top/p) \otimes (I - {\mathbf 1}{\mathbf 1}^\top/p)).
	$$
We should note that that the same \emph{computable} upper bound for the rate of a.s. exponential convergence of the complete process $x^i_{n}/w^i_{n}$ can be readily derived by combining the arguments of  \cite{iutzeler2013analysis} with Lemma \ref{lem:1 over u1n} of the present paper.
\end{remark}

The upper bounds for the rates in the preceding theorems seem to have been unknown prior to this paper. As for the exact rate the best we can claim is the following theorem:
	
\begin{theorem} 
	\label{thm:max i of ratios via barMnx vs barMnw} Assume that any of the sets of conditions of Theorems \ref{thm:ei Mnx vs ei Mnw IID}, \ref{thm:ei Mnx vs ei Mnw STATIONARY BOUNDED} or  \ref{thm:ei Mnx vs ei Mnw STATIONARY GEN} is satisfied. Then for all pairs of non-negative vectors $(x,w) \in \mathbb R^p_+ \times \mathbb R^p_+,$ such that $x,w \neq 0,$ 
	except perhaps for a set of Lebesgue-measure zero, it holds that 
		\begin{equation*}
                  \lim_{n \rightarrow \infty} {\frac 1 n} \log \max_i \left \vert \frac{e_i^\top M_n x}{e_i^\top M_n w}  - \frac{v^{1 \cdot}  {x}}{v^{1 \cdot}  {w}} \right \vert = -(\lambda_1 - \lambda_2) \quad {\rm a.s.}
		\end{equation*}
\end{theorem}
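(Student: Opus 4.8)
The upper bound ``$\le -(\lambda_1-\lambda_2)$'' is already delivered by Theorems~\ref{thm:ei Mnx vs ei Mnw IID}, \ref{thm:ei Mnx vs ei Mnw STATIONARY BOUNDED}, \ref{thm:ei Mnx vs ei Mnw STATIONARY GEN} (taking the maximum over the finitely many coordinates $i$ preserves the $\limsup$ bound), so the whole content of the theorem is the matching \emph{lower} bound
$$
\liminf_{n\to\infty}\frac1n\log\max_i\left|\frac{e_i^\top M_n x}{e_i^\top M_n w}-\frac{v^{1\cdot}x}{v^{1\cdot}w}\right|\ \ge\ -(\lambda_1-\lambda_2)\qquad\text{a.s.}
$$
for Lebesgue-almost every pair $(x,w)$. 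The plan is to relate $\max_i|\cdot|$ to the total variation distance $\|\bar x_n-\bar w_n\|_{\rm TV}$ of Theorem~\ref{thm:barMnx vs barMnw}, whose exact exponential rate is $-(\lambda_1-\lambda_2)$ for almost every pair $(x,w)$, and then transfer that rate downward through the chain of definitions.

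First I would reduce to nonnegative initial vectors with strictly positive entries (this is all Theorem~\ref{thm:barMnx vs barMnw} needs and the excepted Lebesgue-null set is harmless). For such $x,w>0$ write, with $X_n:=\mathbf1^\top x_n$, $W_n:=\mathbf 1^\top w_n$,
$$
\frac{e_i^\top M_n x}{e_i^\top M_n w}-\frac{v^{1\cdot}x}{v^{1\cdot}w}
=\frac{X_n}{W_n}\cdot\frac{\bar x_n^{\,i}}{\bar w_n^{\,i}}-\frac{v^{1\cdot}x}{v^{1\cdot}w}.
$$
From the rank-one approximation \eqref{eq:Mn SVD Rank 1} and Lemma~\ref{lem:Oseledec for An positive and x positive} one has $X_n/W_n\to v^{1\cdot}x/v^{1\cdot}w=:\pi^\top x$ (this ratio of the first coordinate of $u_n^{\cdot1}$ times $\sigma^1_n$ cancels), and one has the sharper statement that the difference decays at rate $-(\lambda_1-\lambda_2)$ only in a one-sided way; what I actually need is that $X_n/W_n$ is bounded above and below a.s. and that $\min_i\bar w_n^{\,i}$ is not too small. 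The latter is exactly where the index of sequential primitivity and the lower bound on the positive entries of $A_n$ (Conditions~\ref{cond:E log alpha n}/\ref{cond:BOUNDED ALPHA n BETA n}, or the $M$-mixing Condition~\ref{cond:M MIXING}) enter: by the arguments already used to prove the convergence theorems one gets $\frac1n\log\min_i\bar w_n^{\,i}\to0$ a.s. (a lemma of the ``$1/u^1_n$ is sub-exponential'' type, cf.\ Lemma~\ref{lem:1 over u1n}). Granting this, a short computation gives
$$
\max_i\left|\frac{e_i^\top M_n x}{e_i^\top M_n w}-\pi^\top x\right|\ \ge\ c_n\,\|\bar x_n-\bar w_n\|_{\rm TV}
$$
for a sequence $c_n>0$ with $\frac1n\log c_n\to0$ a.s.; indeed $\max_i|\bar x_n^{\,i}/\bar w_n^{\,i}-1|\ge 2\|\bar x_n-\bar w_n\|_{\rm TV}/\max_i\bar w_n^{\,i}\ge 2\|\bar x_n-\bar w_n\|_{\rm TV}$, and one absorbs the factor $X_n/W_n$ and the drift of $X_n/W_n$ toward $\pi^\top x$ into $c_n$, using that the latter drift is itself $o(e^{\varepsilon n})$ while $\|\bar x_n-\bar w_n\|_{\rm TV}$ decays genuinely exponentially, so it does not dominate.

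Taking $\frac1n\log$ of the displayed inequality and invoking the exact limit from Theorem~\ref{thm:barMnx vs barMnw} yields
$$
\liminf_{n\to\infty}\frac1n\log\max_i\left|\frac{e_i^\top M_n x}{e_i^\top M_n w}-\pi^\top x\right|\ \ge\ -(\lambda_1-\lambda_2)\qquad\text{a.s.},
$$
which combined with the upper bound closes the argument for almost every $(x,w)$. The one subtle point that must be handled carefully — and which I expect to be the main obstacle — is to make sure the subtraction of $\pi^\top x$ does not create cancellation that destroys the lower bound: a priori, the coordinate $i$ achieving $\max_i|\bar x_n^{\,i}/\bar w_n^{\,i}-1|$ need not be the one achieving $\max_i|e_i^\top M_nx/e_i^\top M_nw-\pi^\top x|$, because of the $n$-dependent shift $X_n/W_n-\pi^\top x$. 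The fix is to argue that this shift is \emph{common} to all coordinates up to a relative error that is itself $O(\|\bar x_n-\bar w_n\|_{\rm TV})$ (again via \eqref{eq:Mn SVD Rank 1}), so it contributes at the same exponential order and cannot cancel the leading term for all $i$ simultaneously; equivalently one may pass to the variable $e_i^\top M_nx/e_i^\top M_nw - X_n/W_n$, whose $\max_i|\cdot|$ is comparable to $\|\bar x_n-\bar w_n\|_{\rm TV}$ with sub-exponential constants, and then note $X_n/W_n-\pi^\top x = O(e^{(\lambda_2-\lambda_1+o(1))n})$ so that replacing $X_n/W_n$ by $\pi^\top x$ changes nothing at the level of exponential rates. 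Everything else is bookkeeping with the Oseledec decomposition already set up in Sections~\ref{sec:Technical preliminaries}--\ref{sec:Normalized_Products}.
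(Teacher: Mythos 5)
Your proposal is correct and follows essentially the same route as the paper's proof: the upper bound comes from Theorems \ref{thm:ei Mnx vs ei Mnw IID}--\ref{thm:ei Mnx vs ei Mnw STATIONARY GEN}, and the matching lower bound is obtained by transferring the exact rate of Theorem \ref{thm:barMnx vs barMnw} (restated for $\max_i$ in place of the total variation norm) through the identity $\bar{x}_n^i-\bar{w}_n^i=\bigl(x_n^i/w_n^i-\mathbf 1^\top x_n/\mathbf 1^\top w_n\bigr)\,w_n^i/\mathbf 1^\top x_n$ together with the fact that $\mathbf 1^\top x_n/\mathbf 1^\top w_n$ has a finite positive limit, and finally swapping the centering. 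Two remarks on the details you flag: the auxiliary lemma you request on $\min_i\bar w_n^i$ is not actually needed (the paper only uses the trivial bound $w_n^i\le\mathbf 1^\top w_n$, and your own displayed inequality likewise only uses $\sum_i\bar w_n^i=1$); and the cancellation issue you identify as the main obstacle is resolved exactly as you suggest, formalized in the paper via the sandwich $\tfrac12\bigl(\max_i x_n^i/w_n^i-\min_i x_n^i/w_n^i\bigr)\le\max_i\bigl|x_n^i/w_n^i-v_n\bigr|\le\max_i x_n^i/w_n^i-\min_i x_n^i/w_n^i$, valid for any $v_n$ between the minimal and maximal ratios, applied first with $v_n=\mathbf 1^\top x_n/\mathbf 1^\top w_n$ and then with $v_n=v^{1\cdot}x/v^{1\cdot}w$, the latter choice being admissible by Lemma \ref{lem:min ratio and max ratio monotone}.
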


\section{Specification for push-sum with packet loss}
\label{sec:Push Sum}

In this section we summarize the implications of the above stated results for the classic push-sum or weighted gossisp algorithm, allowing packet loss as described in the Introduction, which is in line with the setting of \cite{gerencser2018push}.

\begin{theorem}
	\label{thm:ei Mnx vs ei Mnw PS}
	Let $(A_n)$ be the associated i.i.d.\ sequence of matrices defined under (\ref{eq:def An PS with Loss}). Assume that the directed communication graph $(G,E)$ is strongly connected. Then for any initial values $x \in \mathbb R^p,$ and a non-negative vector of initial weights $w \in \mathbb R^p_+ $ such that $w \neq 0$ ratio consensus takes place and for all $i$-s an explicit upper bound for the a.s. rate of convergence can be given as follows:
	\begin{equation*}
	\label{eq:ei Mnx vs ei Mnw PS}
	\limsup_{n \rightarrow \infty} {\frac 1 n} \log \left \vert \frac{e_i^\top M_n x}{e_i^\top M_n \mathbf 1}  - \frac{v^{1\cdot}x}{v^{1\cdot} \mathbf 1} \right \vert \le -(\lambda_1 - \lambda_2) <0.  
	\end{equation*}
	In the case of no packet loss we have $\lambda_1 = 0$ and $v^{1\cdot} = \mathbf 1^\top.$ 
\end{theorem}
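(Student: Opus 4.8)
\textit{Proof proposal.} The plan is to verify that the i.i.d.\ sequence $(A_n)$ of (\ref{eq:def An PS with Loss}) satisfies every hypothesis of Theorem~\ref{thm:ei Mnx vs ei Mnw IID} and then read off the conclusion. The structural requirements are immediate. Each $A_n$ coincides with the identity outside one column; its diagonal is strictly positive (the one modified diagonal entry equals $1-\alpha_{ji}>0$), so $A_n$ has no zero row and no zero column, i.e.\ it is allowable. Since the communicating edge $f_n$ and the loss indicators $\rho_n(\cdot)$ are i.i.d.\ and take finitely many values, $A_n$ has a finite range $\cal A$; hence $\mE\log^+\|A_1\|<\infty$, and since the minimal positive entry $\alpha_n$ ranges over a finite subset of $(0,1]$ we also get $\mE\log^-\alpha_n>-\infty$, so Condition~\ref{cond:E log alpha n} holds. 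Moreover each $M_n$ has a strictly positive diagonal, so $\lambda_1>-\infty$. What is left to establish is sequential primitivity of $(A_n)$ and the strict gap $\lambda_1-\lambda_2>0$.

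For sequential primitivity I would pass, via Remark~\ref{rem:seq prim vs set prim}, to the $(0,1)$-family ${\cal A}^0=\{\gamma(A):A\in{\cal A}\}$ and show it is primitive, i.e.\ admits one strictly positive product. For each edge $(j,i)\in E$, the ``no loss on $(j,i)$'' matrix has positive entries precisely on the diagonal and at position $(j,i)$, so its $\gamma$-image is the identity with an extra $1$ at $(j,i)$; left-multiplying a non-negative matrix by it replaces row $j$ by the entrywise sum of rows $j$ and $i$ and leaves all diagonal entries positive. Thus such a factor copies the support of row $i$ into row $j$, i.e.\ moves information along the reversed edge; since edge reversal preserves strong connectivity of $G$, one can follow directed paths in $G$ to choose a finite sequence of such edge-matrices whose product, started from the identity, has all entries positive, the identity matrix --- which lies in ${\cal A}^0$ whenever a packet loss occurs --- being insertable freely without harm. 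Hence ${\cal A}^0$, and therefore ${\cal A}$, is primitive, so $(A_n)$ is sequentially primitive; Lemma~\ref{lem:sigma n tails iid A n} then gives $\psi_n<\infty$ w.p.1 with geometrically decaying tails, in particular $\mE\psi_n<\infty$.

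The strict gap $\lambda_1-\lambda_2>0$ I would obtain from the link between the gap and Birkhoff's contraction coefficient treated in Section~\ref{sec:Gap vs Birkhoff}: primitivity furnishes a strictly positive product whose Birkhoff contraction coefficient is strictly less than $1$, and by the argument of that section this forces $\lambda_1-\lambda_2>0$. All hypotheses of Theorem~\ref{thm:ei Mnx vs ei Mnw IID} now hold, so applying it with the weight vector at hand --- in particular with $w=\mathbf 1$ as displayed --- gives ratio consensus together with $\limsup_n\frac1n\log|e_i^\top M_nx/e_i^\top M_n\mathbf 1-v^{1\cdot}x/v^{1\cdot}\mathbf 1|\le-(\lambda_1-\lambda_2)$, whose right-hand side is strictly negative.

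Finally, in the no-loss case $r_f\equiv 0$, every $A_n$ is column stochastic (the modified column sums to $(1-\alpha_{ji})+\alpha_{ji}=1$), hence so is $M_n$; then $\|M_n\|$ is bounded above and bounded away from $0$, so $\lambda_1=0$, and Theorem~\ref{thm:ei Mnx vs ei Mnw COLUMN STOCH} applies, identifying the common limit as the non-random $\mathbf 1^\top x/\mathbf 1^\top w$. In the notation of the present theorem this says $\lambda_1=0$ and, up to the irrelevant normalization of the unit row vector $v^{1\cdot}$ (which enters only through ratios $v^{1\cdot}x/v^{1\cdot}w$), $v^{1\cdot}=\mathbf 1^\top$. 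I expect the only step requiring genuine care to be the combinatorial passage from strong connectivity of $G$ to primitivity of ${\cal A}^0$ --- in particular keeping straight the direction of the graph edge versus that of the induced row operation, and accounting for the identity matrices contributed by packet losses; the remaining verifications are routine given the earlier sections.
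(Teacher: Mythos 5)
Your proposal is correct, and its overall skeleton matches the paper's: verify the hypotheses of Theorem~\ref{thm:ei Mnx vs ei Mnw IID} for the matrices \eqref{eq:def An PS with Loss} and apply it, then invoke Theorem~\ref{thm:ei Mnx vs ei Mnw COLUMN STOCH} for the no-loss case. The one substantive divergence is how you establish $\lambda_1-\lambda_2>0$. The paper proves this as Lemma~\ref{lem:gap for PS is positive} by importing the explicit exponential-rate analysis of the push-sum algorithm from \cite{gerencser2018push} (showing $\|\bar x_n-\bar w_n\|_{\rm TV}=O(e^{-\alpha n})$ for strictly positive $(x,w)$ by a direct argument) and then identifying the resulting negative exponent with $-(\lambda_1-\lambda_2)$ via Theorem~\ref{thm:barMnx vs barMnw}. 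You instead deduce the gap from sequential primitivity alone through the Birkhoff-contraction machinery of Section~\ref{sec:Gap vs Birkhoff} (in effect Theorem~\ref{thm:gap for seq primitive proc}: some finite product is strictly positive with positive probability, hence $\mE\log\tau(M_m)<0$, hence the gap is positive via Corollary~\ref{cor:gap vs lim Birkhoff}). This route is legitimate --- there is no circularity, since that chain rests only on Theorem~\ref{thm:barMnx vs barMnw}, which does not presuppose $\lambda_1>\lambda_2$ --- and it is in fact the simplification the authors themselves point out retrospectively in the Discussion. What the paper's route buys is self-containedness relative to the earlier, concrete push-sum analysis; what yours buys is uniformity, since it needs nothing beyond primitivity. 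Your explicit flooding argument for primitivity of ${\cal A}^0$ (row $j$ of a left product absorbs the support of row $i$ when edge $(i,j)$ fires without loss, loss events contribute harmless identity factors) is the same idea the paper delegates to Lemma~4.2 of \cite{benezit2010weighted} in Lemma~\ref{lem:cal APS prim}; just note that the row operation propagates support \emph{along} the edge direction $i\to j$, not along the reversed edge as you wrote, though this does not affect the conclusion since strong connectivity is preserved under reversal.
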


\begin{proof} [Proof of Theorem \ref{thm:ei Mnx vs ei Mnw PS}.] For the first step of the proof we verify the only non-trivial condition of Theorem \ref{thm:barMnx vs barMnw} requiring that $(A_n)$ is sequentially primitive. Since $(A_n)$ is an i.i.d.\ sequence we can resort to Lemma \ref{rem:seq prim vs set prim}. Consider therefore the (finite) range of the random matrices $A_n$ given by (\ref{eq:def An PS with Loss}), denoted by ${\cal A}_{\rm PS}.$ The first of the following two lemmas restates a well known result in the consensus literature, see \cite{benezit2010weighted}, while the second one claims the validity of the key condition $\lambda_1 > \lambda_2$. The proofs will be given in Appendix III:
	
	\begin{lemma}
		\label{lem:cal APS prim}Assume that the directed communication graph $G=(V,E)$ is strongly connected. Then the set ${\cal A}_{\rm PS}$ is primitive.
	\end{lemma}
	
	\begin{lemma}
		\label{lem:gap for PS is positive}
		Let $(A_n)$ be an i.i.d.\ sequence of matrices corresponding to the push-sum algorithm allowing packet loss, defined in (\ref{eq:def An PS with Loss}), satisfying the condition described in the Introduction. Then we have for the spectral gap $\lambda_1-\lambda_2 > 0$.
	\end{lemma}

	To complete the proof of Theorem \ref{thm:ei Mnx vs ei Mnw PS} we apply Theorem \ref{thm:ei Mnx vs ei Mnw IID}, the conditions of which are partially assumed, and partially ensured by the lemmas above. This confirms the general case with possible packet loss. In the case of no packet loss the claim $\lambda_1=0$ and $v^{1 \cdot}= \mathbf 1$ is implied by Theorem \ref{thm:ei Mnx vs ei Mnw COLUMN STOCH}.
\end{proof}

\begin{remark}
Note that the argument used in \cite{atar1997lyapunov} to estimate $\lambda_1 - \lambda_2$ from below can not be used in our case. Namely, \cite{atar1997lyapunov} refers to a result of \cite{peres1992domains}
		$$
		\lambda_1 - \lambda_2 \ge - {\mE} \log \tau(A_1),
		$$
		where $\tau(A_1)$ is the the Birkhoff contraction coefficients of $A_1$ (see below). However, in our case, we have $\tau(A_n)=1$ a.s., hence the lower bound is simply $0.$ 
\end{remark}

By this we end the description of the key points of our work and switch to slightly heavier mathematical details. First we describe the critical steps of the proofs of our main theorems, with some technical details relegated to Appendix IV, and then a mathematical interlude on the spectral gap is added.

\section{Proofs of Theorems \ref{thm:ei Mnx vs ei Mnw IID}, \ref{thm:ei Mnx vs ei Mnw STATIONARY BOUNDED}, \ref{thm:ei Mnx vs ei Mnw STATIONARY GEN}, \ref{thm:ei Mnx vs ei Mnw COLUMN STOCH} and \ref{thm:max i of ratios via barMnx vs barMnw}}
\label{sec:Proofs of Main Theorems}

For the proof of Theorem \ref{thm:ei Mnx vs ei Mnw IID} a natural starting point would be Theorem \ref{thm:barMnx vs barMnw}. However, we will see that nothing is gained compared to a direct proof. On the other hand, the situation is completely different in the case of Theorem \ref{thm:max i of ratios via barMnx vs barMnw}, the proof of which will rely essentially on Theorem \ref{thm:barMnx vs barMnw}.

For the description of the proofs we need the following definition. A stochastic process $\xi_n, n \ge 1$ is called sub-exponential, if for any $\varepsilon >0$ we have for all $n,$ with finitely many exceptions, a.s. $|\xi_n|  ~\le  e^{\varepsilon n}.$ We will use the notation $\xi_n = e^{o(1)n}$. Equivalently, $\xi_n, n \ge 1$ is called sub-exponential if 
$\limsup_{n \rightarrow \infty} {\frac 1 n}  \log |\xi_n| \le 0.$

In view of (\ref{eq:Mn SVD Rank 1}), assuming $\lambda_1 > \lambda_2,$ the matrix product $M_n$ is asymptotically equivalent to the sequence of rank-$1$ matrices $ u_{n}^{\cdot 1} v^{1 \cdot} \sigma^{1}_n,$ a.s. A weak, a priori estimate of a measure of collinearity of the rows of $M_n$ is formalized in Condition \ref{cond:Mnik vs Mnjk subexp}, under which the proofs of Theorems \ref{thm:ei Mnx vs ei Mnw IID} - \ref{thm:max i of ratios via barMnx vs barMnw} will be completed. The validity of Condition \ref{cond:Mnik vs Mnjk subexp} itself will be verified by Lemma \ref{lem:Mnik vs Mnjk subexp} in Appendix IV.

\begin{condition}
\label{cond:Mnik vs Mnjk subexp} Letting $M_n=A_n A_{n-1} \cdots A_1,$ as before, we assume that for any pair of row indices $i,j$, and any column index $k$ it holds that ${M_{n}^{ik}}/{M_{n}^{jk}}$ is sub-exponential. 
\end{condition}

\begin{lemma}
  \label{lem:1 over u1n}
  Under the conditions of Theorem \ref{thm:barMnx vs barMnw}, the additional assumption that $\lambda_1 > \lambda_2,$ and Condition \ref{cond:Mnik vs Mnjk subexp}, it holds that $1/u^{i 1}_{n}$ is sub-exponential a.s. for all $i.$
\end{lemma}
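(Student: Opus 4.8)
The plan is to bound $u_n^{i1}$ from below by a sub-exponential quantity for every fixed row index $i$; since $u_n^{i1}>0$ this immediately gives that $1/u_n^{i1}=e^{o(1)n}$. Throughout I work on the full-measure event on which Oseledec's theorem holds, $\psi_1<\infty$ (so that $M_n>0$ for all $n\ge\psi_1$, as $A_n$ is row-allowable), and Condition~\ref{cond:Mnik vs Mnjk subexp} holds for the finitely many triples $(i,j,k)$. First I fix the sign convention in the singular value decomposition \eqref{eq:Mn SVD}: for $n\ge\psi_1$ the matrix $M_n^\top M_n$ is strictly positive and symmetric, so by Perron--Frobenius its largest eigenvalue $(\sigma^1_n)^2$ is simple (in particular $\sigma^1_n>\sigma^2_n$) with a sign-definite eigenvector $v_n^{1\cdot}$, the first row of $V_n$; choosing it positive, $u_n^{\cdot1}=M_n (v_n^{1\cdot})^\top/\sigma^1_n>0$ as well. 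This is compatible with the convergence $v_n^{1\cdot}\to v^{1\cdot}$, hence $v^{1\cdot}\ge0$, and since $\Vert v^{1\cdot}\Vert=1$ I fix a column index $k$ with $v^{1k}>0$.

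The second step extracts a ``good row''. Since $u_n^{\cdot1}$ is a positive unit vector, for every $n$ there is an index $j_n^\ast$ with $u_n^{j_n^\ast1}\ge 1/\sqrt p$. Plugging this into the rank-one expansion \eqref{eq:Mn SVD Rank 1}, which in coordinate form reads $M_n^{jk}=u_n^{j1}v^{1k}\sigma^1_n+O(e^{(\lambda_2+o(1))n})$, and using $\sigma^1_n=e^{(\lambda_1+o(1))n}$ from \eqref{eq:Mn SVD Sigma n} together with $\lambda_1>\lambda_2$, I obtain for all large $n$ that $\max_j M_n^{jk}\ge M_n^{j_n^\ast k}\ge \tfrac1{2\sqrt p}v^{1k}\sigma^1_n$.

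The third step transfers this lower bound to an arbitrary fixed row $i$ via Condition~\ref{cond:Mnik vs Mnjk subexp}: each $M_n^{jk}/M_n^{ik}$ is sub-exponential, hence so is their maximum over the finitely many $j$, whence $M_n^{ik}\ge e^{-o(1)n}\max_j M_n^{jk}\ge\tfrac1{2\sqrt p}v^{1k}\sigma^1_n e^{-o(1)n}$. Feeding this back into \eqref{eq:Mn SVD Rank 1} for the row $i$, i.e.\ $u_n^{i1}v^{1k}\sigma^1_n=M_n^{ik}-O(e^{(\lambda_2+o(1))n})$, and dividing by $v^{1k}\sigma^1_n=v^{1k}e^{(\lambda_1+o(1))n}$ yields
\begin{equation*}
u_n^{i1}\ \ge\ \tfrac1{2\sqrt p}\,e^{-o(1)n}\ -\ O\!\big(e^{-(\lambda_1-\lambda_2+o(1))n}\big).
\end{equation*}
Since $\lambda_1-\lambda_2>0$ is a fixed positive constant while every $o(1)$ exponent tends to $0$, the exponentially small subtracted term is eventually negligible compared to the first term, so $u_n^{i1}\ge\tfrac1{4\sqrt p}e^{-o(1)n}$ for all large $n$; thus $1/u_n^{i1}=e^{o(1)n}$ is sub-exponential, which is the claim.

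The main obstacle is precisely the third step: a priori a single entry $M_n^{ik}$ could be exponentially smaller than the operator norm (which is of order $e^{\lambda_1 n}$), in which case $u_n^{i1}$ could itself decay exponentially and the lemma would fail. Condition~\ref{cond:Mnik vs Mnjk subexp} is exactly the weak collinearity-of-rows estimate that rules this out, by letting the $\Theta(\sigma^1_n)$ lower bound available on the (index-varying) best row be carried over to every fixed row at only sub-exponential cost. A minor auxiliary point is the Perron--Frobenius bookkeeping in the first step, needed to pin down the signs of $u_n^{\cdot1}$ and $v^{1\cdot}$ and the positivity of the selected coordinate $v^{1k}$.
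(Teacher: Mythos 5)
Your argument is correct and follows essentially the same route as the paper's proof: the rank-one SVD expansion \eqref{eq:Mn SVD Rank 1}, the pigeonhole observation that some coordinate of the unit vector $u_n^{\cdot 1}$ is at least $1/\sqrt p$, and Condition~\ref{cond:Mnik vs Mnjk subexp} to transfer that lower bound to every row. The only differences are cosmetic: you run the argument directly rather than by contradiction, and you make explicit the Perron--Frobenius sign bookkeeping for $u_n^{\cdot1}$ and $v^{1\cdot}$ that the paper leaves implicit.
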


\begin{proof} [Proof of Lemma \ref{lem:1 over u1n}]  
  Recall that according to (\ref{eq:Mn SVD Rank 1}) we have a.s. $M_n = u_{n}^{\cdot 1} v^{1 \cdot} \sigma^{1}_n + O(e^{(\lambda_2 + o(1))n}).$
  Take an arbitrary pair of row indices $j,i,$ and compare the rows $M_n^{j \cdot}$ and $M_n^{i \cdot}.$ Choosing a column index $k$ such that $v^{1 k} > 0$ we consider 
  \begin{equation}
    \label{eq:Mn ik vs jk}
    \frac {M_{n}^{jk}} {M_{n}^{ik}} = \frac {u^{j 1}_{n}  v^{1 k}  \sigma^{1}_n +  O(e^{(\lambda_2 + o(1))n})} 
    {u^{i 1}_{n}   v^{1 k} \sigma^{1}_n +  O(e^{(\lambda_2 + o(1))n})}.
  \end{equation}
  Taking into account $ v^{1 k} >0,$ we would have for any $j,i$
  \begin{equation}
    \label{eq:Mn ik vs jk FNL}
    ~\frac {M_{n}^{jk}} {M_{n}^{ik}} = 
    \frac {u^{j 1}_n   +  O(e^{(-\lambda_1 + \lambda_2 + o(1))n})} 
    {u^{i 1}_n   +  O(e^{(-\lambda_1 +\lambda_2 + o(1))n})}.
  \end{equation}
  
From this it follows that $1/{u^{i 1}_{n}}$ is sub-exponential as stated. Indeed, assume that this not the case, then for some small $\varepsilon >0$ we have $1/ {u^{i 1}_{n}} \ge e^{\varepsilon n}$ for an infinite subsequence, say $n=n_r,$ consequently ${u^{i 1}_{n}} \le e^{-\varepsilon n}$ for $n=n_r.$ Select $j$ so that for some infinite subsequence of $(n_r)$, which we identify with $(n_r)$, we have ${u^{j 1}_{n_r}} \ge 1/\sqrt p.$ 
The indirect assumption and the choice of $j$ would then imply ${M_{n}^{jk}}/{M_{n}^{ik}} \ge C e^{\varepsilon n} $ with some $C > 0$ infinitely many times a.s., which is a contradiction to Condition \ref{cond:Mnik vs Mnjk subexp}. 
\end{proof}

	\begin{lemma}
		\label{lem:v1cdot positive}
		Under the conditions of Theorem \ref{thm:barMnx vs barMnw}, with the additional assumption that $\lambda_1 > \lambda_2,$ and Condition \ref{cond:Mnik vs Mnjk subexp}, it holds that $v^{1 i} > 0$ for all $i=1,\ldots,p.$
	\end{lemma}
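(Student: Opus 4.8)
The plan is to pin down $v^{1\cdot}$, up to a positive scalar, as the common limit of the normalized rows of $M_n$, and then to deduce strict positivity of every component from the monotonicity already established in Lemma~\ref{lem:min ratio and max ratio monotone} together with the strict positivity of $M_n$ once $n\ge\psi_1$. Throughout I would keep the sign normalisation used earlier: since $(A_n)$ is sequentially primitive and each $A_n$ is row-allowable, $M_n>0$ entrywise for every $n\ge\psi_1$ with $\psi_1<\infty$ a.s.; for such $n$ the symmetric matrix $M_n^\top M_n$ has strictly positive entries, so by Perron--Frobenius its top eigenvalue $(\sigma^1_n)^2$ is simple and its unit eigenvector, taken to be $v_n^{1\cdot\top}$, is strictly positive, and then $u_n^{\cdot1}=M_nv_n^{1\cdot\top}/\sigma^1_n>0$ as well. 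Passing to the limit gives $v^{1\cdot}=\lim_n v_n^{1\cdot}\ge0$, and since $\vert v^{1\cdot}\vert=1$ we get $\mathbf 1^\top v^{1\cdot}=\sum_k v^{1k}\ge \vert v^{1\cdot}\vert=1>0$.

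Fix a coordinate $m$; the goal is $v^{1m}>0$. First I would show that for every row index $l$,
$$
\lim_{n\to\infty}\frac{e_l^\top M_n e_m}{e_l^\top M_n\mathbf 1}=\frac{v^{1m}}{\mathbf 1^\top v^{1\cdot}}\qquad\text{a.s.}
$$
Indeed, by the rank-one approximation \eqref{eq:Mn SVD Rank 1} we have $e_l^\top M_n e_m=u^{l1}_n v^{1m}\sigma^1_n+O(e^{(\lambda_2+o(1))n})$ and, summing over the second factor, $e_l^\top M_n\mathbf 1=u^{l1}_n(\mathbf 1^\top v^{1\cdot})\sigma^1_n+O(e^{(\lambda_2+o(1))n})$. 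By Lemma~\ref{lem:1 over u1n}, $1/u^{l1}_n$ is sub-exponential, so $u^{l1}_n\sigma^1_n=e^{(\lambda_1+o(1))n}$; dividing numerator and denominator by $u^{l1}_n\sigma^1_n$ and using $\lambda_1>\lambda_2$ (so both error terms are $O(e^{-(\lambda_1-\lambda_2)n+o(n)})\to0$) together with $\mathbf 1^\top v^{1\cdot}>0$ (so the denominator stays bounded away from $0$) yields the claim. Crucially, this argument uses neither the present lemma nor Theorems~\ref{thm:ei Mnx vs ei Mnw IID}--\ref{thm:max i of ratios via barMnx vs barMnw}, so there is no circularity.

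Then I would invoke Lemma~\ref{lem:min ratio and max ratio monotone} with the choice $x=e_m$, $w=\mathbf 1$ (admissible since $M_n\mathbf 1>0$): the quantity $y_n:=\min_l (e_l^\top M_ne_m)/(e_l^\top M_n\mathbf 1)$ is non-decreasing in $n$, and by the previous step it converges to $v^{1m}/(\mathbf 1^\top v^{1\cdot})$ (a minimum over the finite index set of $l$ of quantities all having this common limit). Hence $v^{1m}/(\mathbf 1^\top v^{1\cdot})=\sup_n y_n\ge y_{\psi_1}$, and $y_{\psi_1}>0$ because $M_{\psi_1}>0$ entrywise makes every $e_l^\top M_{\psi_1}e_m$ and $e_l^\top M_{\psi_1}\mathbf 1$ strictly positive. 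Since $\mathbf 1^\top v^{1\cdot}>0$, this forces $v^{1m}>0$; as $m$ was arbitrary, $v^{1\cdot}>0$ entrywise.

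The only delicate point is the middle step: one must read off the limit of the row ratios purely from the rank-one approximation and the sub-exponentiality of $1/u^{l1}_n$, rather than from the ratio-consensus theorems, whose proofs rely on this very lemma; this is exactly where Lemma~\ref{lem:1 over u1n} does the essential work. After that, the monotonicity in Lemma~\ref{lem:min ratio and max ratio monotone} and the strict positivity of $M_{\psi_1}$ finish the argument with no further effort.
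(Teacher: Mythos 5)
Your proof is correct, and it takes a genuinely different route from the paper's. The paper argues by contradiction via time reversal: it introduces $\bar A_n := A_{-n}^\top$, shows that the Lyapunov spectra of $(\bar A_n)$ and $(A_n)$ coincide, identifies $v^{1\cdot}$ with the limit of the first columns of the $U$-factors of the reversed products through the duality \eqref{eq:SVD vs bar SVD 1}--\eqref{eq:SVD vs bar SVD 3}, and then plays off the exponential smallness that $v^{1i}=0$ would force against the sub-exponentiality of the reciprocal obtained by applying Lemma~\ref{lem:1 over u1n} to the \emph{reversed} process, reaching a contradiction at the level of distributions. You instead stay entirely with the forward process: Perron--Frobenius gives $v_n^{1\cdot}>0$ for $n\ge\psi_1$, hence $v^{1\cdot}\ge 0$ and $v^{1\cdot}\mathbf 1\ge 1$, which is exactly enough to run the rank-one expansion \eqref{eq:Mn SVD Rank 1} for the specific pair $(x,w)=(e_m,\mathbf 1)$ without circularity (you correctly note that Lemma~\ref{lem:1 over u1n} does not depend on the present lemma), and the deterministic monotonicity from the first part of Lemma~\ref{lem:min ratio and max ratio monotone} then transfers the strict positivity of $M_{\psi_1}$ to the limit $v^{1m}/(v^{1\cdot}\mathbf 1)$. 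Your route is more elementary --- no time reversal, no need to re-verify Condition~\ref{cond:Mnik vs Mnjk subexp} for $(\bar A_n)$, no distributional argument --- and it even yields the quantitative bound $v^{1m}\ge (v^{1\cdot}\mathbf 1)\cdot\min_l M_{\psi_1}^{lm}/(M_{\psi_1}\mathbf 1)^l>0$; what the paper's argument buys in exchange is the forward--backward SVD duality, which it reuses nowhere else here but is of independent interest. The only point worth making fully explicit in your write-up is the orientation of the singular vectors: Raghunathan's convergence is of the spanned line, so one should say that fixing $v_n^{1\cdot}>0$ for $n\ge\psi_1$ pins the sign consistently and forces the limit into the closed positive orthant.
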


\begin{proof} [Proof of Lemma \ref{lem:v1cdot positive}] Consider the matrix process ${\bar A}_n = A_{-n}^\top.$ First we show that the Lyapunov exponents for the processes $({\bar A}_n)$ and $({A}_n)$ are identical, ${\bar \lambda}_k=\lambda_k$ for all $k=1,\ldots,p.$ Define for any pair of integers $n > m $ the products ${ M}_{n,m} ={ A}_n  { A}_{n-1} \cdots { A}_{m}$ and ${\bar M}_{n,m} ={\bar A}_n  {\bar A}_{n-1} \cdots {\bar A}_{m}$. Then we have 
\begin{align}
{M}_{n,m}^\top &= \left (A_n  A_{n-1} \cdots A_{m}\right)^\top 
=    A_{m}^\top \cdots A_{n-1}^\top  A_n^\top  \nonumber \\ 
&=  {\bar A}_{-m} \cdots {\bar A}_{-n+1}  {\bar A}_{-n} = {\bar M}_{-m,-n}. \nonumber
\end{align}

Let a singular value decomposition (SVD) of ${M}_{n,m}$ be  
\begin{equation*}
{M}_{n,m} = U_{n,m} \Sigma_{n,m} V_{n,m}.  
\end{equation*}
Then an SVD for ${\bar M}_{-m,-n}$ is obtained as follows: 
\begin{equation*}
{\bar M}_{-m,-n}=V_{n,m}^\top \Sigma_{n,m} U_{n,m}^\top  =:{\bar U}_{-m,-n} {\bar \Sigma}_{-m,-n} {\bar V}_{-m,-n}.
\end{equation*}
with the notations
\begin{align}
{\bar U}_{-m,-n} &= V_{n,m}^\top, 
\label{eq:SVD vs bar SVD 1} \\ 
{\bar \Sigma}_{-m,-n} &= \Sigma_{n,m}, \label{eq:SVD vs bar SVD 2} \\ 
{\bar V}_{-m,-n} &= U_{n,m}^\top. \label{eq:SVD vs bar SVD 3}
\end{align}

To prove ${\bar \lambda}_1=\lambda_1$ note that \eqref{eq:SVD vs bar SVD 1}-\eqref{eq:SVD vs bar SVD 3} implies:
\begin{align}
{\bar \lambda}_1 = \lim_{-m \rightarrow \infty} &{\frac 1 {n-m+1}} \log {\bar \sigma}_{-m,-n}^{1} \nonumber \\
=  
\lim_{-m \rightarrow \infty} &{\frac 1 {n-m+1}} \log {\sigma}_{n,m}^{1}  \nonumber
\end{align}
w.p.1, and hence also in distribution. But ${\sigma}_{n,m}^{1}$ and ${\sigma}_{n-m+1,1}^{1}$ have the same distribution, and for the latter we have 
\begin{equation*}
{\lambda}_1 = \lim_{-m \rightarrow \infty} {\frac 1 {n-m+1}} \log { \sigma}_{n-m+1,1}^{1} 
\end{equation*}
w.p.1, and hence also in distribution. Thus the distribution of ${\bar \lambda}_1$ and $\lambda_1$ agree implying ${\bar \lambda}_1=\lambda_1.$

Applying the same argument to the $k$-th exterior product sequences formed by $A_n \wedge \ldots \wedge A_n$ and ${\bar A}_n \wedge \ldots \wedge {\bar A}_n$ we conclude that ${\bar \lambda}_1 + \ldots + {\bar \lambda}_k ={\lambda}_1 + \ldots + {\lambda}_k$ for all $k$ implying the claim.

Next, consider the matrices $ {V}_{n,m}$ with $m$ fixed and $n$ tending to $\infty.$ The first rows of $ {V}_{n,m}$ denoted by $ {v}_{n,m}^{1 \cdot} $ converge a.s. to a limit, say $ {v}_{m}^{1 \cdot}$ with exponential rate by Lemma 5 of \cite{raghunathan-oseledec}, the error being $O(e^{(-\lambda_1 + \lambda_2 + o(1))(n-m)}).$ This implies, that the first columns of ${\bar U}_{-m,-n}$, denoted by ${\bar u}_{-m,-n}^{\cdot 1}$ also converge to a limit ${\bar u}_{-m}^{\cdot 1} = {v}_{m}^{1 \cdot \top}$ a.s. with the same exponential rate when $n$ tends to $\infty.$

Take $m=1$ and assume in contrary to the statement of the lemma that $v^{1 i}=  {v}_{1}^{1 i} = 0$ for some $i.$ Then ${\bar u}_{-1}^{i 1} = 0,$ and thus ${\bar u}_{-1,-n}^{i 1}$ is exponentially small a.s. when $n$ tends to $\infty:$ writing $\xi_n := {\bar u}_{-1,-n}^{i 1}$ we have for any $0 < \mu < \lambda_1 - \lambda_2$ with some $C(\omega)>0$ the inequality $\xi_n \le C(\omega) e^{-\mu n}.$ This implies for the distribution of $\xi_n$ that for any $\mu' < \mu  < \lambda_1 - \lambda_2$ 
\begin{align}
		\label{eq:xin le exp minus mu tends to 1}
		 & P(\xi_n \le e^{-\mu' n}) \ge P( C(\omega) e^{-\mu n} \le   e^{-\mu' n}) \nonumber \\ 
		= & P( C(\omega) \le   e^{(\mu - \mu') n}) \rightarrow 1, \quad {\rm as} \quad n \rightarrow \infty. 
\end{align}

	On the other hand, shifting the time indices in ${\bar u}_{-1,-n}^{i 1}$ by $n+1$ we get the random variables $\xi'_n := {\bar u}_{n,1}^{i 1}$ having the same distribution as $\xi_n.$ Applying Lemma \ref{lem:1 over u1n} to the process $({\bar A}_n),$ where the conditions are easily verified, we get that $1/\xi'_n$ is sub-exponential. Thus for any $\varepsilon >0$ we have $1/\xi'_n \le C'(\omega) e^{\varepsilon n}$, with some $ C'(\omega) >0.$ Following the argument given above we get for the distribution of $1/\xi'_n$ that for any $\varepsilon' > \varepsilon >0$ it holds that $P(1/\xi'_n \le e^{\varepsilon' n}) \rightarrow 1$ as $n \rightarrow \infty,$ implying $P( e^{-\varepsilon' n} \le \xi'_n ) \rightarrow 1,$ which in turn yields
		\begin{equation}
		\label{eq:xin prime le exp tends to 0}
		P(\xi'_n < e^{-\varepsilon' n}) \rightarrow 0, \quad {\rm as} \quad n \rightarrow \infty. 
		\end{equation}
		Choosing $0< \varepsilon < \varepsilon' < \mu',$ and recalling that $\xi'_n$ and $\xi_n$ have the same distribution, we get a contradiction with \eqref{eq:xin le exp minus mu tends to 1}, and thus the proof is complete. 
\end{proof}

\begin{proof}[Proofs of Theorems \ref{thm:ei Mnx vs ei Mnw IID}, \ref{thm:ei Mnx vs ei Mnw STATIONARY BOUNDED} and \ref{thm:ei Mnx vs ei Mnw STATIONARY GEN}:] Assuming the validity of Condition \ref{cond:Mnik vs Mnjk subexp}, to be established separately under each set of conditions of Theorems \ref{thm:ei Mnx vs ei Mnw IID}, \ref{thm:ei Mnx vs ei Mnw STATIONARY BOUNDED}, \ref{thm:ei Mnx vs ei Mnw STATIONARY GEN}, the proof of the quoted three theorems are identical:

Recall that we have by \eqref{eq:Mn SVD Rank 1} $M_n = u_{n}^{\cdot 1} v^{1 \cdot} \sigma^{1}_n + O(e^{(\lambda_2 + o(1))n}),$ hence 
\begin{equation}
  \label{eq:ek Mnx vs ek Mnww 0}
  \frac{e_i^\top M_{n} x}{e_i^\top M_{n} w} = 
  \frac {e_i^\top u^{\cdot 1}_{n}  v^{1 \cdot} x  \sigma^{1}_n +  O(e^{(\lambda_2 + o(1))n})} 
  {e_i^\top u^{\cdot 1}_{n}   v^{1 \cdot} w \sigma^{1}_n +  O(e^{(\lambda_2 + o(1))n})}.
\end{equation}
Dividing both the numerator and the denominator by $\sigma^{1}_n,$ we get 
\begin{equation}
  \label{eq:ek Mnx vs ek Mnw fnl}
  \frac{e_i^\top M_{n} x}{e_i^\top M_{n} w} = 
  \frac {e_i^\top u^{\cdot 1}_{n} \cdot v^{1 \cdot} x   +  O(e^{(-\lambda_1 + \lambda_2 + o(1))n})} 
  {e_i^\top u^{\cdot 1}_{n} \cdot  v^{1 \cdot} w +  O(e^{(-\lambda_1 + \lambda_2 + o(1))n})}.
\end{equation}
Note that $v^{1 \cdot} > 0$ by Lemma \ref{lem:v1cdot positive}, and thus $w \ge 0, ~ w \neq 0$ imply $v^{1 \cdot} w>0.$ Divide both the numerator and the denominator by $v^{1 \cdot} w$ and also by $e_i^\top u^{\cdot 1}_{n}.$ The proof is then completed by noting that $1/e_i^\top u^{\cdot 1}_{n} = 1/u^{i 1}_{n}$ is sub-exponential for all $i,$ as stated in Lemma \ref{lem:1 over u1n}.
\end{proof}

\begin{proof} [Proof of Theorem \ref{thm:ei Mnx vs ei Mnw COLUMN STOCH}] First note that $M_n$ is column-stochastic for all $n,$ hence $\Vert  M_n \Vert$ is bounded from above and bounded away from zero. It follows that $\lambda_1 = 0.$ To complete the proof it is sufficient to show that $v^{1 \cdot}$ is proportional to ${\mathbf 1}^\top,$ (implying that $v^{1 \cdot} = {\mathbf 1}^\top / \sqrt p.$) Writing
	\begin{equation}
	{\mathbf 1}^\top = {\mathbf 1}^\top M_n = {\mathbf 1}^\top 
	u^{\cdot 1}_{n}  v^{1 \cdot}  \sigma^{1}_{n} +  O(e^{(\lambda_2 + o(1))n}) \quad {\rm a.s.},
	\end{equation}
	and noting that ${\mathbf 1}^\top u^{\cdot 1}_{n}$ and $\sigma^{1}_{n} = \Vert  M_n \Vert$ are bounded and bounded away from $0$, after dividing by these we get
	\begin{equation}
	c_n {\mathbf 1}^\top = v^{1 \cdot}  +  O(e^{(\lambda_2 + o(1))n}) \quad {\rm a.s.,}
	\end{equation}
	with some possibly random scalar $c_n. $ Letting $n \rightarrow \infty,$ and taking into account $\lambda_2 <0,$ the r.h.s. will converge to $v^{1 \cdot},$ and thus the l.h.s. will also converge, implying that $c_n$ converges to some $c,$ yielding $c {\mathbf 1}^\top = v^{1 \cdot},$ as claimed.  
\end{proof}

\begin{proof} [Proof of Theorem \ref{thm:max i of ratios via barMnx vs barMnw}] 
Note that the a.s. inequality 
	\begin{equation}
		\label{eq:ek Mnx vs ek Mnw limsup}
		\limsup_{n \rightarrow \infty} {\frac 1 n} \max_i \log \left \vert \frac{e_i^\top M_n x}{e_i^\top M_n w}  - \frac{v^{1 \cdot}  {x}}{v^{1 \cdot}  {w}} \right \vert \le -(\lambda_1 - \lambda_2) 
	\end{equation}
follows directly from Theorem \ref{thm:ei Mnx vs ei Mnw IID}. 	
For the proof that the inequality is actually an equality we will rely on Theorem \ref{thm:barMnx vs barMnw}. First note that, in addition to $w >0$ we may assume $x >0,$ since the set of pairs $(x, w) \in \mathbb R^p \times  \mathbb R^p,$ having a $0$ component in $x$ have zero Lebesgue measure. Now, note that for any pairs or probability vectors $(\bar x, \bar w)$ we have 
$$
\max_i \vert \bar{x}_i- \bar{w}_i \vert \le    \Vert \bar{x}- \bar{w} \Vert_{\rm TV} \le p \max_i \vert \bar{x}_i- \bar{w}_i \vert.
$$
Therefore Theorem \ref{thm:barMnx vs barMnw} can be restated as follows: for all pairs $(x,w) \in  \mathbb R_+^p \times  \mathbb R_+^p, ~x,w \neq 0,$ except for a set of Lebesgue-measure zero, it holds a.s. that
\begin{align}
\label{eq:max i barMnx vs barMnw }
& \lim_{n \rightarrow \infty} \frac{1}{n} \log \max_i \vert \bar{x}_{n}^{i}- \bar{w}_{n}^{i} \vert \nonumber \\
 =
&\lim_{n \rightarrow \infty} \frac{1}{n} \log \max_i \left \vert \frac {{x}_{n}^{i}} {\mathbf 1^\top x_n}- \frac {{w}_{n}^{i}} {\mathbf 1^\top w_n} \right \vert = - (\lambda_1 -
\lambda_2). 
\end{align}
We may relate this equality to a ratio consensus problem by rewriting the middle term as 
\begin{align}
\label{eq:max i barMnx vs barMnw V3}
& \lim_{n \rightarrow \infty} \frac{1}{n} \log \max_i \left \vert \frac {{x}_{n}^{i}} {{w}_{n}^{i} }- \frac  {\mathbf 1^\top {x}_n} {\mathbf 1^\top   {w}_n} \right \vert \cdot \frac   {{w}_{n}^{i} }   {{\mathbf 1}^\top {x}_n} \nonumber \\
 = & \lim_{n \rightarrow \infty} \frac{1}{n} \max_i \left( \log \left \vert \frac {{x}_{n}^{i}} {{w}_{n}^{i} }- \frac  {{\mathbf 1}^\top x_n} {{\mathbf 1}^\top w_n} \right \vert +  \log \frac   {{w}_{n}^{i} }   {{\mathbf 1}^\top x_n} \right).
\end{align}
Now, if $a_i, b_i$ are real numbers, then $\max_i (a_i + b_i) \le \max_i a_i + \max_i  b_i.$ 
Apply this inequality to the r.h.s. of 
(\ref{eq:max i barMnx vs barMnw V3}) and take into account \eqref{eq:max i barMnx vs barMnw } to get that $-(\lambda_1 - \lambda_2)$ is bounded from above by 
\begin{equation*}
\liminf_n \frac{1}{n} \left( \max_i \log \left \vert \frac {{x}_{n}^{i}} {{w}_{n}^{i} }- \frac  {{\mathbf 1}^\top x_n} {{\mathbf 1}^\top w_n} \right \vert +  \max_i \log \frac   {{w}_{n}^{i} }   {{\mathbf 1}^\top x_n} \right).
\end{equation*}
Furthermore, if $\alpha_n, \beta_n, n \ge 1,$ are real numbers and $\gamma_n = \alpha_n + \beta_n$ then $\liminf_n \gamma_n \le \liminf_n \alpha_n + \limsup_{n \rightarrow \infty} \beta_n.$ 
(For the verification recall that $\gamma_n \le \alpha_n + \sup_{n\ge 1} \beta_n =: \alpha_n + B$, yielding $\inf_{n \ge m} \gamma_n \le \inf_{n \ge m} (\alpha_n + B) = \inf_{n \ge m} \alpha_n + B$.) Also note that ${{ w}_{n}^{i} } \le \mathbf 1^\top {{w}_{n}}$ implies $\max_i \log ({{w}_{n}^{i}} / {{\mathbf 1}^\top x_n})\le \log (\mathbf 1^\top w_n / {{\mathbf 1}^\top x_n})$. Thus we get 
\begin{align}
\label{eq:max i barMnx vs barMnw V5}
-(\lambda_1 - \lambda_2) \le 
&\liminf_n \frac{1}{n}  \max_i \log \left \vert \frac {{x}_{n}^{i}} {{w}_{n}^{i} }- \frac  {{\mathbf 1}^\top x_n} {{\mathbf 1}^\top w_n} \right \vert \nonumber \\ 
+ & \limsup_{n \rightarrow \infty} \frac{1}{n} \log \frac   {{\mathbf 1}^\top {w}_{n} }   {{\mathbf 1}^\top x_n} .
\end{align}
Now, by Corollary \ref{cor:q Mnx vs q Mnw} $\mathbf 1^\top w_n / {{\mathbf 1}^\top x_n}$ has a finite, non-zero limit w.p.1, hence
$$
\limsup_{n \rightarrow \infty} \frac{1}{n} \log \frac {{\mathbf 1}^\top w_{n}}  {{\mathbf 1}^\top x_n} = \lim_{n \rightarrow \infty} \frac{1}{n} \log \frac {\mathbf 1^\top w_n}  {{\mathbf 1}^\top x_n} = 0.
$$
Hence we conclude that 
\begin{equation}
\label{eq:gap vs liminf max}
- (\lambda_1 - \lambda_2) \le \liminf_n \frac{1}{n} \max_i \log \left \vert \frac {{x}_{n}^{i}} {{w}_{n}^{i} }- \frac  {{\mathbf 1}^\top x_n} {{\mathbf 1}^\top w_n} \right \vert,  
\end{equation}
and combining this with \eqref{eq:ek Mnx vs ek Mnw limsup} we can write equality and $\lim$ in place of $\liminf$ on the right hand side: 
\begin{equation}
\label{eq:gap vs lim max}
- (\lambda_1 - \lambda_2) = \lim_{n \rightarrow \infty} \frac{1}{n} \max_i \log \left \vert \frac {{x}_{n}^{i}} {{w}_{n}^{i} }- \frac  {{\mathbf 1}^\top x_n} {{\mathbf 1}^\top w_n} \right \vert. 
\end{equation}

Now, in view of Corollary \ref{cor:q Mnx vs q Mnw} we have 
\begin{equation}
\min_i \frac {{x}_{n}^{i}} {{w}_{n}^{i} } \le \frac  {{\mathbf 1}^\top x_n} {{\mathbf 1}^\top w_n} \le \max_i \frac {{x}_{n}^{i}} {{w}_{n}^{i} }.
\end{equation}
On the other hand, the trivial inequalities  
\begin{align}
\label{eq: min vs center vs max}
\frac 1 2  & \left \vert \max_i \frac {{x}_{n}^{i}} {{w}_{n}^{i}}- \min_i \frac {{x}_{n}^{i}} {{w}_{n}^{i}} \right \vert \le \max_i \left \vert \frac {{x}_{n}^{i}} {{w}_{n}^{i} }- \frac  {{\mathbf 1}^\top x_n} {{\mathbf 1}^\top w_n} \right \vert  \nonumber \\  \le  & \left \vert \max_i \frac {{x}_{n}^{i}} {{w}_{n}^{i}}- \min_i \frac {{x}_{n}^{i}} {{w}_{n}^{i}} \right \vert 
\end{align}
combined with \eqref{eq:gap vs lim max} yield
\begin{equation}
\label{eq:gap vs lim max-min}
- (\lambda_1 - \lambda_2) = \lim_{n \rightarrow \infty} \frac{1}{n} \log\left \vert \max_i  \frac {{x}_{n}^{i}} {{w}_{n}^{i} }-\min_i  \frac {{x}_{n}^{i}} {{w}_{n}^{i} } \right \vert \quad {\rm a.s.}  
\end{equation}
except for a set of initial $(x,w)$-s of Lebesgue measure zero. Considering \eqref{eq: min vs center vs max} and replacing $ {{\mathbf 1}^\top x_n} /{{\mathbf 1}^\top w_n}$ by an arbitrary sequence of intermediate values $v_n$ such that 
$$
\min_i  \frac {{x}_{n}^{i}} {{w}_{n}^{i} } \le v_n \le \max_i  \frac {{x}_{n}^{i}} {{w}_{n}^{i} } 
$$
we get by the same logic 
\begin{equation}
\label{eq:gap vs liminf max-min}
- (\lambda_1 - \lambda_2) = \lim_{n \rightarrow \infty} \frac{1}{n} \max_i \log  \left \vert  \frac {{x}_{n}^{i}} {{w}_{n}^{i} }- v_n \right \vert.  
\end{equation}
Taking $v_n=  {v^{1 \cdot}  {x}} /{v^{1 \cdot}  {w}}$ for all $n,$ in view of Lemma \ref{lem:min ratio and max ratio monotone}, we get the claim.
\end{proof}

\begin{remark}In the special case when $M_n$ is column-stochastic, we have ${\mathbf 1}^\top  {x}_{n} = {\mathbf 1}^\top  M_n x = {\mathbf 1}^\top  {x}$, and similarly ${\mathbf 1}^\top w_n  = {\mathbf 1}^\top w$ for all $n.$ Furthermore, by Theorem \ref{thm:ei Mnx vs ei Mnw COLUMN STOCH} we have $v^{1 \cdot} = \mathbf 1^\top.$ Thus, in this special case \eqref{eq:gap vs lim max} immediately implies the claim without any further deliberations. 
\end{remark}

\section{A representation of the spectral gap  $\lambda_1 - \lambda_2$}
\label{sec:Gap vs Birkhoff}

As we have seen, the spectral gap $\lambda_1 - \lambda_2$ plays a key role in characterizing the stability of normalized products and the convergence rate of the ratio consensus method. In this section we present a set of simple results providing computable lower bounds and alternative representations for the spectral gap under the conditions of Theorems \ref{thm:barMnx vs barMnw}, \ref{thm:ei Mnx vs ei Mnw IID}, \ref{thm:ei Mnx vs ei Mnw STATIONARY BOUNDED} or \ref{thm:ei Mnx vs ei Mnw STATIONARY GEN}.

A lower bound for the spectral gap was established in \cite{peres1992domains}, Proposition 5, under the condition that $A_1$ is strictly positive with positive probability. In fact this result is a simple corollary of Theorem \ref{thm:barMnx vs barMnw} relying on its less restrictive conditions. For the formal statement we introduce the following definitions and notations.

\begin{definition}
\label{def:Hilbert distance} Let $x,y \in \mathbb R^p_+$ be strictly positive vectors, $x, y >0.$ Then their Hilbert-distance is defined as 
\begin{equation}
h(x,y)  := \log \max_{k,l} \left( \frac {x_k} {y_k} \big /  \frac {x_l} {y_l} \right).
\end{equation}
\end{definition}

The Hilbert-distance satisfies the properties of a metric within the set of strictly positive vectors in $\mathbb R^p,$ except that $h(x,y)=0$ if and only if $y = c x$ with some $c >0.$ The operator norm of a non-negative allowable matrix $A$ corresponding to the Hilbert-distance is called the Birkhoff contraction coefficient of $A.$ More exactly we set

\begin{definition}
\label{def:Birkhoff coeff} The Birkhoff contraction coefficient of a non-negative allowable matrix $A$ is defined as 
\begin{equation*}
\tau(A) :=\sup \left \{\frac {h(Ax,Ay)} {h(x,y)} ~\bigg|~ x,y \in \mathbb R^p_+,  ~{h(x,y)} \neq 0 \right \}.  
\end{equation*}	
\end{definition} 	

Note that $x,y > 0$ and the assumption that $A$ is allowable imply that $Ax, Ay > 0,$ and thus $h(Ax,Ay)$ is well-- defined. Obviously, $\tau(A)$ is sub-multiplicative, i.e. $\tau(AB) \le \tau(A) \cdot \tau(B),$ and it is easy to see that $\tau(A) \le 1.$

A beautiful theorem due to Birkhoff yields an explicit expression of $\tau(A)$ in terms of the elements of $A,$ which we present for allowable matrices. Define an intermediary quantity $\varphi (A)$ as follows. Let $\varphi (A)=0$ if $A$ has any $0$ element. Otherwise, we set 
\begin{equation}
\label{eq:def varphi A}
\varphi (A) :=  \log \max_{i,j,k,l}  \left( \frac {A^{ik}} {A^{jk}} \right) / \left(  \frac {A^{il}} {A^{jl}}  \right) = \max_{i,j} 
h(A^{i \cdot}, A^{j \cdot}).
\end{equation}	
By Birkhoff's theorem (Theorem 3.12 of \cite{seneta2006non} or \cite{cavazos2003alternative}) 
\begin{equation}
\label{eq:Birkhoff vs tanh}
\tau(A) = \tanh ~\left(\frac {\varphi(A)} 4 \right)= \frac {e^{\varphi(A)/4} - e^{\varphi(A)/4}} {e^{\varphi(A)/4} + e^{\varphi(A)/4}}. 
\end{equation}

\begin{theorem}
\label{thm:gap vs Birkhoff A1}
Let $(A_n)$ be a strictly stationary, ergodic stochastic process of $p \times p$ matrices satisfying the conditions of Theorem \ref{thm:barMnx vs barMnw}. 
Then 
$$
\lambda_1-\lambda_2 \ge  - \mE \log\tau(A_1).
$$
\end{theorem}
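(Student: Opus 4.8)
Here is the plan. The idea is to squeeze the exponential decay rate of the misalignment between the normalized iterates $\bar x_n$ and $\bar w_n$ between two bounds: the exact rate $-(\lambda_1-\lambda_2)$ furnished by Theorem~\ref{thm:barMnx vs barMnw}, and an upper bound $\mathbb E\log\tau(A_1)$ obtained from Birkhoff's contraction theorem together with the ergodic theorem. I would start by recalling Birkhoff's contraction inequality: for a non-negative allowable matrix $A$ and strictly positive vectors $u,v$ one has $h(Au,Av)\le\tau(A)\,h(u,v)$, and $\tau$ is sub-multiplicative. Since each $A_k$ is allowable, the partial products $M_n$ have no zero row, so starting from any strictly positive pair $x,w\in\mathbb R^p$ the iterates $x_n=M_nx$ and $w_n=M_nw$ remain strictly positive, and iterating the contraction inequality along $A_1,\dots,A_n$ gives
\begin{equation*}
h(x_n,w_n)\ \le\ h(x,w)\prod_{k=1}^{n}\tau(A_k),\qquad n\ge1 .
\end{equation*}
Because $\tau(A_k)\le1$, the stationary ergodic sequence $\log\tau(A_k)$ is bounded above by $0$, hence $\mathbb E\log\tau(A_1)\in[-\infty,0]$ is well defined and Birkhoff's ergodic theorem (which remains valid when the mean is $-\infty$) gives $\frac1n\sum_{k=1}^n\log\tau(A_k)\to\mathbb E\log\tau(A_1)$ a.s. Combining this with the displayed bound and with the trivial observation that $h(x_n,w_n)\le h(x,w)$ is bounded, I obtain
\begin{equation*}
\limsup_{n\to\infty}\frac1n\log h(x_n,w_n)\ \le\ \mathbb E\log\tau(A_1),\qquad \frac1n\,h(x_n,w_n)\ \to\ 0\quad\text{a.s.}
\end{equation*}

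Next I would pass from the Hilbert distance to the total-variation distance of the probability vectors $\bar x_n,\bar w_n$. Writing $r_k=x_n^{k}/w_n^{k}$ and $\bar r=\sum_k r_k\bar w_n^{k}=\mathbf 1^\top x_n/\mathbf 1^\top w_n$, a short computation gives $\bar x_n^{k}-\bar w_n^{k}=\bar w_n^{k}(r_k-\bar r)/\bar r$, so that
\begin{equation*}
\|\bar x_n-\bar w_n\|_{\rm TV}\ \le\ \frac{\max_k r_k-\min_k r_k}{2\min_k r_k}\ =\ \tfrac12\bigl(e^{h(x_n,w_n)}-1\bigr).
\end{equation*}
Using $e^t-1\le t\,e^t$ for $t\ge0$, together with the two limits just obtained, this yields
\begin{equation*}
\limsup_{n\to\infty}\frac1n\log\|\bar x_n-\bar w_n\|_{\rm TV}\ \le\ \mathbb E\log\tau(A_1)\qquad\text{a.s.}
\end{equation*}
Finally I would invoke Theorem~\ref{thm:barMnx vs barMnw}: choosing a pair $(x,w)\in\mathbb R_+^p\times\mathbb R_+^p$ with strictly positive components that lies outside the Lebesgue-null exceptional set of that theorem (such pairs form a set of full measure, so this is possible), the limit $\lim_n\frac1n\log\|\bar x_n-\bar w_n\|_{\rm TV}$ exists and equals $-(\lambda_1-\lambda_2)$ a.s.; comparing with the previous display gives $-(\lambda_1-\lambda_2)\le\mathbb E\log\tau(A_1)$, which is the assertion.

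The main obstacle is the middle step, the explicit comparison of $h$ with $\|\cdot\|_{\rm TV}$: although elementary, it must be carried out carefully so that the multiplicative constants do not contaminate the exponential rate. Two secondary points also need attention: one must make sure that the full-measure set of strictly positive initial pairs genuinely meets the co-null set on which Theorem~\ref{thm:barMnx vs barMnw} yields the \emph{exact} rate, and Birkhoff's ergodic theorem has to be applied in the form that stays valid when $\mathbb E\log\tau(A_1)=-\infty$ (in which case the inequality simply reads $\lambda_1-\lambda_2\ge+\infty$ and is consistent with $\lambda_2=-\infty$).
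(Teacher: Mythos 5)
Your proposal is correct and follows essentially the same route as the paper: Birkhoff's contraction inequality plus sub-multiplicativity to get $h(x_n,w_n)\le h(x,w)\prod_{k=1}^n\tau(A_k)$, the ergodic theorem for $\frac1n\sum\log\tau(A_k)$ (including the $-\infty$ case), an elementary comparison $\|\bar x_n-\bar w_n\|_{\rm TV}\le\frac12(e^{h(x_n,w_n)}-1)$ identical to the paper's Lemma~\ref{lem:TV vs Hilbert dist}, and finally Theorem~\ref{thm:barMnx vs barMnw} applied to a strictly positive pair outside the Lebesgue-null exceptional set to identify the left-hand side with $-(\lambda_1-\lambda_2)$. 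The only (cosmetic) difference is your derivation of the TV--Hilbert comparison via the identity $\bar x_n^{k}-\bar w_n^{k}=\bar w_n^{k}(r_k-\bar r)/\bar r$, which is equivalent to the paper's argument.
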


\begin{proof} [Proof of Theorem \ref{thm:gap vs Birkhoff A1}.]
Since $A_m$ is allowable for all $m$ and $x,w$ are strictly positive, the Hilbert-distances of $x_n= A_n A_{n-1}\cdots A_1 x$ and $w_n= A_n A_{n-1}\cdots A_1 w$ are well-defined, and we have 
\begin{align}
\label{eq:Hilbert xn wn le}
h(x_n, w_n) & =  h ( A_n A_{n-1}\cdots A_1 x, ~A_n A_{n-1}\cdots A_1 w ) \nonumber \\
&\le \prod_{k=1}^n \tau(A_k)\cdot h(x,w). 
\end{align}
Therefore we get:
\begin{align}
\label{eq:limsup h Mnx Mnw}
&\limsup_{n \rightarrow \infty} \frac{1}{n} \log h(x_n, w_n) \le \limsup_{n \rightarrow \infty} {\frac 1 n} \sum_{k=1}^n \log \tau(A_k) \nonumber \\ 
= & \lim_{n \rightarrow \infty} {\frac 1 n} \sum_{k=1}^n \log \tau(A_k) = \mE \log\tau(A_1) \quad {\rm a.s.},
\end{align}
where the last two equalities follow from the ergodic theorem. Note that we can handle also the case when $ \mE \log\tau(A_1) =- \infty$ since $\log\tau(A_1)$ is bounded from above by $0.$ Now, the left hand side can be bounded from below via the total variation $||\bar x_n - \bar w_n||_{\rm TV}$ using the following elementary lemma:

\begin{lemma}
\label{lem:TV vs Hilbert dist}
Let $\xi,\eta$ be two strictly positive probability vectors in $\mathbb R^p.$ Then for their total variation distance we have 
$$
\|\xi - \eta\|_{\rm TV} \le \frac 1 2 ~(e^{h(\xi, \eta)} - 1).
$$
\end{lemma}

\begin{proof} [Proof of Lemma \ref{lem:TV vs Hilbert dist}]
Let us write briefly $h=h(\xi, \eta).$ First note that for any $k,l$ we have
$$
\frac{\xi_k}{\eta_k} / \frac{\xi_l}{\eta_l} \le e^h.
$$
Define $R = \max_k \frac{\xi_k}{\eta_k}$, $r = \min_l
\frac{\xi_l}{\eta_l}.$ Since $\xi,\eta$ are
probability vectors, we have $R \ge 1 \ge r,$ and thus from the above inequality we get $e^{-h} \le r  \le R \le e^h$. Taking a $k$ such that $\xi_k \ge \eta_k$ we have 
$$
|\xi_k - \eta_k| = \xi_k - \eta_k = \left (\frac{\xi_k}{\eta_k} - 1 \right) \eta_k \le (e^h - 1) \eta_k.
$$
On the other hand, for $\xi_k \le \eta_k$ we get 
$$
|\xi_k - \eta_k| =  \eta_k - \xi_k  = \left( 1 - \frac{\xi_k}{\eta_k} \right) \eta_k  \le  (1 - e^{-h})\eta_k \le (e^h - 1) \eta_k.
$$
Summation over $k$ gives the claim. 
\end{proof}

To complete the proof of Theorem \ref{thm:gap vs Birkhoff A1} we note that due to the lemma above we can bound $h=h(\xi, \eta)$ from below for small $h,$ say for $0 \le h \le 1/2$ we get $\|\xi - \eta\|_{\rm TV} \le h.$ Taking into account that the Hilbert-distance is invariant w.r.t. scaling its arguments we have $h(x_n, w_n) = h(\bar x_n, \bar w_n),$ and this is exponentially small by Theorem \ref{thm:barMnx vs barMnw}, thus we can use $\|\xi - \eta\|_{\rm TV} \le h$ in (\ref{eq:limsup h Mnx Mnw}) to get
\begin{align}
& \limsup_{n \rightarrow \infty} {\frac 1 n} \log \|\bar x_n - \bar w_n\|_{\rm TV} \le 
\limsup_{n \rightarrow \infty} {\frac 1 n} \log h(\bar x_n, \bar w_n) \nonumber \\
= & \limsup_{n \rightarrow \infty} {\frac 1 n} \log h(x_n, w_n)
\le \mE \log\tau(A_1) \quad {\rm a.s.} 
\end{align}
But we know by Theorem \ref{thm:barMnx vs barMnw} that for almost all pairs $(x,w), x >0, w >0,$ the left side is equal to $-(\lambda_1-\lambda_2)$ a.s., even with $\lim$ instead of $\limsup.$ From here after rearrangement we get the claim. 
\end{proof}

Note that the above result is directly not applicable for the analysis of the push-sum algorithm allowing packet loss, since all off-diagonal elements of $A_1,$ except at most one, is $0$ and hence $\tau(A_1) \equiv 1$ for all $\omega$. A set of alternative lower bounds can be obtained by segmenting the product $A_n \cdots A_1$ into the product of blocks of fixed length, say $m \ge 1.$ Let $A_n(\omega)= A_1(T^n \omega),$ where $T$ is a measure-preserving ergodic transformation of $\Omega.$ Theorem \ref{thm:gap vs Birkhoff A1} has the following extension:

\begin{theorem}
  \label{thm:gap vs Birkhoff Am A1}
  Let $(A_n)$ be a strictly stationary, ergodic stochastic process of $p \times p$ matrices satisfying the conditions of Theorem \ref{thm:barMnx vs barMnw}. Then for all integers integers $m \ge 1$ we have 
  \begin{equation}
    \label{eq:gap vs lim Birkhoff}
    \lambda_1-\lambda_2 \ge   - \frac{1}{m} \mE \log\tau(M_m).
  \end{equation}
\end{theorem}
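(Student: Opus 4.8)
The plan is to run the proof of Theorem \ref{thm:gap vs Birkhoff A1} again, but chopping the product into consecutive blocks of length $m$. Put $B_k := A_{km}A_{km-1}\cdots A_{(k-1)m+1}$ for $k\ge 1$, so that $M_{nm}=B_nB_{n-1}\cdots B_1$ and $B_1=M_m$. Each $B_k$ is non-negative and allowable and $\tau(B_k)\le 1$, and by sub-multiplicativity of $\tau$ together with the contraction property of the Birkhoff coefficient relative to the Hilbert distance, for any strictly positive initial pair $x,w>0$ one has
\begin{equation*}
h(x_{nm},w_{nm}) = h\bigl(B_n\cdots B_1 x,\; B_n\cdots B_1 w\bigr) \le h(x,w)\prod_{k=1}^{n}\tau(B_k),
\end{equation*}
whence $\tfrac1n\log h(x_{nm},w_{nm}) \le \tfrac1n\sum_{k=1}^{n}\log\tau(B_k) + o(1)$.

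I would then estimate the two sides for a pair $(x,w)$, $x,w>0$, chosen in the full-measure set of Theorem \ref{thm:barMnx vs barMnw}. On the left, recall (as in the proof of Theorem \ref{thm:gap vs Birkhoff A1}) that $h(x_n,w_n)=h(\bar x_n,\bar w_n)\to 0$ exponentially, so by Lemma \ref{lem:TV vs Hilbert dist} one has $\|\bar x_n-\bar w_n\|_{\rm TV}\le h(\bar x_n,\bar w_n)$ eventually; combined with $\tfrac1n\log\|\bar x_n-\bar w_n\|_{\rm TV}\to -(\lambda_1-\lambda_2)$ a.s., this gives $\liminf_n \tfrac1n\log h(x_n,w_n)\ge -(\lambda_1-\lambda_2)$ a.s., and passing to the subsequence $n\mapsto nm$ yields $\liminf_n\tfrac1n\log h(x_{nm},w_{nm})\ge -m(\lambda_1-\lambda_2)$ a.s. On the right, $\log\tau(B_k)=(\log\tau(M_m))\circ T^{(k-1)m}$ with $\log\tau(M_m)\le 0$, so Birkhoff's ergodic theorem applied to the measure-preserving transformation $T^m$ shows $\tfrac1n\sum_{k=1}^{n}\log\tau(B_k)\to Y_\infty$ a.s. for some $Y_\infty\in[-\infty,0]$ with $\mE Y_\infty=\mE\log\tau(M_m)$. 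Combining the two estimates gives $-m(\lambda_1-\lambda_2)\le Y_\infty$ a.s.; taking expectations and dividing by $m$ yields \eqref{eq:gap vs lim Birkhoff}.

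The one delicate point is this averaging step: the $m$-step shift $T^m$ need not be ergodic even when $T$ is, so a priori $Y_\infty$ is merely the conditional expectation $\mE[\log\tau(M_m)\mid\mathcal I]$ over the $T^m$-invariant $\sigma$-field rather than the constant $\mE\log\tau(M_m)$. This is harmless because the left-hand side $-m(\lambda_1-\lambda_2)$ is a genuine constant, so the a.s.\ bound $-m(\lambda_1-\lambda_2)\le Y_\infty$ upgrades to $-m(\lambda_1-\lambda_2)\le \mE Y_\infty=\mE\log\tau(M_m)$ on taking expectations; alternatively one averages the analogous bounds over the $m$ possible block offsets and invokes ergodicity of $T$ itself. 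One could also phrase the argument as applying Theorem \ref{thm:gap vs Birkhoff A1} to the stationary block process $(B_k)$, whose Lyapunov exponents are $m\lambda_1,\dots,m\lambda_p$ because $\sigma^{k}(B_n\cdots B_1)=\sigma^{k}_{nm}$; the direct route above is precisely what makes the non-ergodicity of $T^m$ irrelevant. Finally the degenerate case $\mE\log\tau(M_m)=-\infty$ is automatically consistent, since then $M_m$, and hence every $M_n$ with $n\ge m$, has rank at most $1$ on an event of positive probability, forcing $\lambda_2=-\infty$.
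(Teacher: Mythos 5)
Your proof is correct and follows essentially the same route as the paper's: the same block decomposition $B_k$, the same Birkhoff-contraction bound on $h(x_{nm},w_{nm})$, the ergodic theorem for $T^m$ producing the conditional expectation $\mE[\log\tau(M_m)\mid\mathcal F_{T^m}]$ over the $T^m$-invariant $\sigma$-field, and the final passage to expectations to absorb the possible non-ergodicity of $T^m$. One small quibble with your closing aside: $\mE\log\tau(M_m)=-\infty$ does not force $\tau(M_m)=0$ (equivalently, rank one) on a set of positive probability, since $\log\tau(M_m)$ could be a.s.\ finite yet non-integrable; but the remark is superfluous, as your a.s.\ inequality followed by taking expectations already handles that degenerate case.
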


\begin{proof} [Proof of Theorem \ref{thm:gap vs Birkhoff Am A1}.]
		Let $m \ge 1,$ and define $B_n = A_{nm} \cdot A_{nm -1}\cdot \cdots A_{(n-1)m +1}.$ Obviously, $B_{n+1}(\omega)=B_{n}(T^m \omega),$ thus $(B_n)$ is a strictly stationary process. Now, in analogy with \eqref{eq:Hilbert xn wn le} we have 
		\begin{align}  
		\label{eq:Hilbert xnm wnm le}
		 h(x_{nm}, w_{nm}) & =  h ( B_n B_{n-1}\cdots B_1 x, ~B_n B_{n-1}\cdots B_1 w ) \nonumber \\ 
		& \le \prod_{k=1}^n
		\tau(B_k)\cdot h(x,w). 
		\end{align}
		Therefore we get:
		\begin{align} 
		\label{eq:limsup h Mnmx Mnmw}
		\limsup_{n \rightarrow \infty}  & \frac{1}{nm} \log h(x_{nm}, w_{nm}) \le \limsup_{n \rightarrow \infty} {\frac 1 {nm}} \sum_{k=1}^n \log \tau(B_k) \nonumber \\ =  & {\frac 1 m} \lim_{n \rightarrow \infty} {\frac 1 n} \sum_{k=1}^n \log \tau(B_k) \quad {\rm w.p.1},
		\end{align} 
		where the last equality follows from the ergodic theorem. Here the left hand side is bounded from below by $-(\lambda_1-\lambda_2)$ w.p.1. as seen above. 
		Applying the ergodic theorem once again the right hand side converges to ${\frac 1 m} \mE ~[\log \tau(B_1) ~|~ {\cal F}_{T^m} ],$ where ${\cal F}_{T^m}$ denotes the $\sigma$-algebra of invariant sets w.r.t. $T^m.$ Thus we get the almost sure upper bound for $	-(\lambda_1-\lambda_2)$:
                \begin{equation*}
                  {\frac 1 m} \lim_{n \rightarrow \infty} {\frac 1 n} \sum_{k=1}^n \log \tau(B_k) = {\frac 1 m} \mE ~[\log \tau(B_1) ~|~ {\cal F}_{T^m} ].
		\end{equation*}
                Taking expectation of both sides we get the claim.  
\end{proof}

	Now, it is easy to see that the sequence $\mE \log\tau(M_m)$ is sub-additive (for any ergodic $T$), therefore $\mE \log\tau(M_m)/m$ has a limit (the value of which may be $-\infty$). In addition   
	$$
	\lim_{m \rightarrow \infty} \frac{1}{m} \mE \log\tau(M_m) = \inf_m \frac{1}{m}  \mE  \log\tau(M_m).  
	$$
	Thus we get the following corollary:

\begin{corollary}
	\label{cor:gap vs lim Birkhoff}
	Let $(A_n)$ be a strictly stationary, ergodic stochastic process of $p \times p$ matrices satisfying the conditions of Theorem \ref{thm:barMnx vs barMnw}. Then $	\lambda_1-\lambda_2$ is bounded from below by 
	\begin{equation}
	\label{eq:gap vs lim Birkhoff}
	\lim_{m \rightarrow \infty} - \frac{1}{m} \mE \log\tau(M_m) =  \sup_m - \frac{1}{m}  \mE  \log\tau(M_m).
	\end{equation}
\end{corollary}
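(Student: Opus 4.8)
The plan is to feed the per-$m$ lower bound of Theorem~\ref{thm:gap vs Birkhoff Am A1} into Fekete's subadditive lemma. Write $a_m := \mE \log\tau(M_m)$, and note $a_m \in [-\infty, 0]$ since $0 \le \tau(M_m) \le 1$. Theorem~\ref{thm:gap vs Birkhoff Am A1} states that for every integer $m \ge 1$,
\[
\lambda_1 - \lambda_2 \ \ge\ -\frac{1}{m}\, a_m ,
\]
so $\lambda_1 - \lambda_2$ is an upper bound for the whole family $\{-a_m/m : m \ge 1\}$, whence $\lambda_1 - \lambda_2 \ge \sup_m (-a_m/m)$. It therefore remains only to identify this supremum with $\lim_{m\to\infty}(-a_m/m)$.

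The second step is to verify that $(a_m)$ is subadditive. For $m, k \ge 1$ I would factor $M_{m+k} = \bigl(A_{m+k}A_{m+k-1}\cdots A_{m+1}\bigr)\, M_m$ and use the sub-multiplicativity $\tau(AB) \le \tau(A)\tau(B)$ of the Birkhoff coefficient (recorded after Definition~\ref{def:Birkhoff coeff}) together with monotonicity of $\log$ (valid with values in $[-\infty,0]$) to obtain $\log\tau(M_{m+k}) \le \log\tau(A_{m+k}\cdots A_{m+1}) + \log\tau(M_m)$ pointwise. Taking expectations and using strict stationarity of $(A_n)$ --- so that $A_{m+k}\cdots A_{m+1}$ has the same law as $M_k$ --- yields $a_{m+k} \le a_m + a_k$.

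Given subadditivity, Fekete's lemma gives $\lim_{m\to\infty} a_m/m = \inf_m a_m/m$, the common value lying in $[-\infty,0]$; multiplying by $-1$ gives $\lim_{m\to\infty}(-a_m/m) = \sup_m (-a_m/m)$. Combined with the first step this yields $\lambda_1 - \lambda_2 \ge \sup_m(-a_m/m) = \lim_{m\to\infty}(-a_m/m)$, which is exactly the asserted bound.

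I do not expect any serious obstacle: the substance is in Theorem~\ref{thm:gap vs Birkhoff Am A1}, and the corollary is essentially the remark that the family of bounds it supplies is governed by a subadditive sequence, so the "best" choice of block length $m$ is attained in the limit. The only point worth a sentence is the degenerate case $a_{m_0} = -\infty$ for some $m_0$: then subadditivity forces $a_{k m_0} = -\infty$ for all $k$, both sides of the displayed identity equal $-\infty$, and Theorem~\ref{thm:gap vs Birkhoff Am A1} already forces $\lambda_1 - \lambda_2 = +\infty$ (i.e.\ $\lambda_2 = -\infty$, since $\lambda_1 < \infty$), so the stated inequality still holds as an identity of extended reals.
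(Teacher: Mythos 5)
Your proposal is correct and follows essentially the same route as the paper: the per-$m$ bound comes from Theorem~\ref{thm:gap vs Birkhoff Am A1}, and the identification of the supremum with the limit comes from sub-additivity of $m \mapsto \mE\log\tau(M_m)$ (via sub-multiplicativity of $\tau$ and stationarity) together with Fekete's lemma. You merely spell out the sub-additivity check and the degenerate $-\infty$ case, which the paper dispatches with ``it is easy to see.''
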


A nice application of Corollary \ref{cor:gap vs lim Birkhoff}, providing a lower bound for the spectral gap, is the following:

\begin{theorem}
	\label{thm:gap for seq primitive proc}
	Let $(A_n)$ be a strictly stationary, ergodic stochastic process of $p \times p$ matrices satisfying the conditions of Theorem \ref{thm:barMnx vs barMnw}. Then we have $\lambda_1-\lambda_2 >0.$	
\end{theorem}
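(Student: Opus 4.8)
The plan is to derive the strict inequality from Corollary~\ref{cor:gap vs lim Birkhoff}, which under the present hypotheses already gives
\[
\lambda_1 - \lambda_2 \ge \sup_{m\ge 1}\Big(-\tfrac{1}{m}\,\mE\log\tau(M_m)\Big).
\]
Since $\tau(A)\le 1$ for every non-negative allowable matrix $A$, each term on the right is nonnegative, so it suffices to exhibit a single index $m$ with $\mE\log\tau(M_m)<0$; and because $\log\tau(M_m)\le 0$ almost surely, this reduces further to showing that the event $\{\tau(M_m)<1\}$ has positive probability.

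To obtain such an $m$ I would use sequential primitivity directly. By definition the forward index $\psi_1$ of \eqref{eq:def index of forward seq prim} is finite w.p.1, so $P(\psi_1\le m)\to 1$ as $m\to\infty$; fix any $m$ with $P(\psi_1\le m)>0$. On the event $\{\psi_1\le m\}$ we have $M_m>0$: this is exactly the remark recorded just after the definition of sequential primitivity, namely that row-allowability of the $A_n$ propagates strict positivity, so that $M_n>0$ for all $n\ge\psi_1$. Whenever $M_m>0$ the quantity $\varphi(M_m)$ of \eqref{eq:def varphi A} is finite, hence by \eqref{eq:Birkhoff vs tanh} we get $\tau(M_m)=\tanh(\varphi(M_m)/4)<1$, i.e.\ $\log\tau(M_m)<0$ there. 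Thus $\log\tau(M_m)$ is $\le 0$ a.s.\ and strictly negative on a set of positive probability, whence $\mE\log\tau(M_m)<0$ (the value $-\infty$ being admissible and only strengthening the bound), and Corollary~\ref{cor:gap vs lim Birkhoff} then yields $\lambda_1-\lambda_2\ge-\tfrac{1}{m}\mE\log\tau(M_m)>0$.

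I do not anticipate a genuine difficulty: the substantive content has been front-loaded into Theorem~\ref{thm:gap vs Birkhoff Am A1} and Corollary~\ref{cor:gap vs lim Birkhoff}. The only points that deserve a word of care are the measurability of $\omega\mapsto\tau(M_m(\omega))$, which is immediate from the explicit formula for $\varphi$ and continuity of $\tanh$, and the elementary fact that a random variable which is $\le 0$ almost surely and strictly negative with positive probability has strictly negative expectation. It is worth emphasizing that the argument uses only $\psi_1<\infty$ a.s., i.e.\ bare sequential primitivity, and none of the stronger moment conditions such as $\mE\psi_1<\infty$ that appear in Theorems~\ref{thm:ei Mnx vs ei Mnw STATIONARY BOUNDED}--\ref{thm:ei Mnx vs ei Mnw STATIONARY GEN}.
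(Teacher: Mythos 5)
Your proposal is correct and follows essentially the same route as the paper: invoke sequential primitivity to get an $m$ with $P(M_m>0)>0$, deduce $P(\tau(M_m)<1)>0$ and hence $\mE\log\tau(M_m)<0$, and conclude via the second part of Corollary~\ref{cor:gap vs lim Birkhoff}. Your write-up merely spells out the intermediate steps (the role of $\psi_1$, finiteness of $\varphi(M_m)$, and the sign argument for the expectation) that the paper leaves implicit.
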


\begin{proof}Since $(A_n)$ is sequentially primitive, there exists a finite $m$ such that $P (M_m > 0) > 0.$ But then $P (\tau(M_m) < 1) > 0,$ and hence $ - \mE  \log\tau(M_m) >0.$ The claim now follows from the second part of Corollary \ref{cor:gap vs lim Birkhoff}.
\end{proof}

A natural question that arises at this point if we can drop the expectation on the right hand sides of (\ref{eq:gap vs lim Birkhoff}). We show that in fact this can be done using Kingmans's sub-additive ergodic theorem, see \cite{hammersley1965first, kingman1968ergodic, kingman1976subadditive, steele1997probability}.

\begin{theorem}
	\label{thm:FK for Birkhoff}
	Let $(A_n)$ be a strictly stationary, ergodic stochastic process of $p \times p$ matrices satisfying the conditions of Theorem \ref{thm:barMnx vs barMnw}. Then we have   
	$$
	\lim_{m \rightarrow \infty} \frac{1}{m} \log\tau(M_m) =  \lim_{m \rightarrow \infty} \frac{1}{m} \mE \log\tau(M_m) \quad {\rm w.p.1}.
	$$
\end{theorem}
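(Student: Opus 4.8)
The plan is to recast $\log\tau(M_m)$ as a subadditive stationary process and then invoke Kingman's subadditive ergodic theorem, the tool announced just above the statement. Since $\tau$ is submultiplicative, for any integers $0\le j<m$ the factorization $M_m = (A_m A_{m-1}\cdots A_{j+1})\,(A_j A_{j-1}\cdots A_1)$ gives $\tau(M_m)\le \tau(A_m\cdots A_{j+1})\,\tau(M_j)$. Writing $X_{j,m}:=\log\tau(A_m A_{m-1}\cdots A_{j+1})$ for $0\le j<m$, so that $X_{0,m}=\log\tau(M_m)$, this becomes $X_{0,m}\le X_{0,j}+X_{j,m}$, and more generally $X_{j,m}\le X_{j,k}+X_{k,m}$ for $j<k<m$. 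Because $(A_n)$ is strictly stationary the array $(X_{j,m})$ is invariant in distribution under the simultaneous unit shift of both indices, and because $(A_n)$ is ergodic this shift acts ergodically. Finally, $\tau(A)\le 1$ for every allowable non-negative matrix $A$, so $X_{0,1}=\log\tau(A_1)\le 0$ and $\mE X_{0,1}^+=0<\infty$, which is the only integrability hypothesis required by Kingman's theorem.

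Kingman's theorem then gives that $\tfrac1m X_{0,m}=\tfrac1m\log\tau(M_m)$ converges almost surely, and---by ergodicity---the limit is the deterministic constant
\[
\gamma \;:=\; \lim_{m\to\infty}\tfrac1m\,\mE\log\tau(M_m)\;=\;\inf_{m\ge1}\tfrac1m\,\mE\log\tau(M_m)\;\in\;[-\infty,0],
\]
whose existence was already recorded above from the subadditivity of the nonpositive real sequence $\mE\log\tau(M_m)$. This is precisely the asserted equality; in the degenerate case $\gamma=-\infty$ both sides are read as $-\infty$, and Kingman's theorem, in the form requiring only $\mE X_{0,1}^+<\infty$, still delivers $\tfrac1m X_{0,m}\to-\infty$ a.s. Under the standing hypotheses one moreover has $\gamma<0$ by sequential primitivity ($P(\tau(M_m)<1)>0$ for some $m$), and $\gamma>-\infty$ whenever the spectral gap is finite, by Corollary~\ref{cor:gap vs lim Birkhoff}; in that case the convergence also holds in $L^1$.

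The proof is essentially a direct application, so there is no serious obstacle; the two points deserving a moment's care are (i) getting the temporal order of the factorization right, so that the ``late block'' $A_m\cdots A_{j+1}$ is a shifted copy of $M_{m-j}$ and $(X_{j,m})$ is a bona fide subadditive (not superadditive) array in Kingman's sense; and (ii) invoking ergodicity to upgrade the Kingman limit from a conditional-expectation-valued random variable to the constant $\gamma$. Everything else---submultiplicativity of $\tau$ together with $\tau\le1$, measurability of $A\mapsto\tau(A)$, and allowability of the partial products $A_m\cdots A_{j+1}$ so that $\tau$ is defined on them---is routine and has already been used in the preceding results.
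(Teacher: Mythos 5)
Your proof is correct and follows essentially the same route as the paper: both recast $\log\tau$ of the partial products as a stationary subadditive array (subadditivity from submultiplicativity of $\tau$, integrability from $\tau\le 1$) and apply Kingman's subadditive ergodic theorem. Your extra remarks on the $-\infty$ case and on ergodicity forcing a deterministic limit are fine but not needed beyond what the paper already records.
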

\begin{proof} [Proof of Theorem \ref{thm:FK for Birkhoff}.] 
The double index series $M_{m,k} = A_m  A_{m-1} \cdots A_{k}$ is obviously strictly stationary, $M_{m+1,k+1}(\omega)=  M_{m,k}(T \omega),$ where $T$ is ergodic. It follows that the double index series $\log \tau(M_{m,k})$ is also strictly stationary. Moreover, it is obviously sub-additive, and $\mE \log^+ \tau (M_{1,1} ) = 0$ since $ \tau (M_{1,1}) \le 1.$ Thus by the sub-additive ergodic theorem we have 
$$
\lim_{m \rightarrow \infty} {\frac 1 {m}} \log \tau(M_{m,1}) = \lim_{m \rightarrow \infty} \frac{1}{m} \mE
 \log\tau(M_{m,1}) \quad {\rm w.p.1},  
$$
which proves our claim. 
\end{proof}

Combining this theorem with Corollary \ref{cor:gap vs lim Birkhoff} we get the following extension:

\begin{corollary}
  \label{cor:gap ge a.s. lim av log Birkhoff Mm}
  Let $(A_n)$ be a strictly stationary, ergodic stochastic process of $p \times p$ matrices satisfying the conditions of Theorem \ref{thm:barMnx vs barMnw}. Then we have the following lower bound for the spectral gap: 
  \begin{equation}
    \label{eq:gap ge a.s. lim av log Birkhoff Mm}
    \lambda_1-\lambda_2 \ge   \lim_{m \rightarrow \infty} - \frac{1}{m} \log\tau(M_m) \quad {\rm w.p.1}.
  \end{equation}
\end{corollary}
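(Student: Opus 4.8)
The plan is to obtain the statement as an immediate consequence of the two results just established, namely Theorem~\ref{thm:FK for Birkhoff} and Corollary~\ref{cor:gap vs lim Birkhoff}, so that essentially no new argument is needed beyond matching up sign conventions and handling the possibly infinite value.

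First I would apply Theorem~\ref{thm:FK for Birkhoff}: under the conditions of Theorem~\ref{thm:barMnx vs barMnw}, the almost sure limit $L := \lim_{m \rightarrow \infty} \frac{1}{m} \log \tau(M_m)$ exists and coincides with the deterministic quantity $\ell := \lim_{m \rightarrow \infty} \frac{1}{m} \mathbb{E} \log \tau(M_m)$. Here I would note that, because $\tau$ is sub-multiplicative with $\tau(\cdot) \le 1$, the sequence $\log \tau(M_m)$ is sub-additive and non-positive, so $-L = -\ell \ge 0$ is well defined as an element of $[0, +\infty]$; the sub-additive ergodic theorem invoked in Theorem~\ref{thm:FK for Birkhoff} applies in this generality, so the identity $L = \ell$ holds without any integrability caveat beyond $\mathbb{E} \log^+ \tau(M_{1,1}) = 0$, which is automatic.

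Next I would invoke Corollary~\ref{cor:gap vs lim Birkhoff}, which gives $\lambda_1 - \lambda_2 \ge -\ell$. Substituting $\ell = L$ from the previous step yields
$$
\lambda_1 - \lambda_2 \ge -L = \lim_{m \rightarrow \infty} \left( - \frac{1}{m} \log \tau(M_m) \right) \quad {\rm w.p.1},
$$
which is exactly \eqref{eq:gap ge a.s. lim av log Birkhoff Mm}.

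I do not expect a genuine obstacle here; the only thing to be careful about is that the passage from $\mathbb{E}$-limits to almost sure limits does not weaken the inequality, and this is precisely guaranteed by the fact that Theorem~\ref{thm:FK for Birkhoff} provides an \emph{equality} (via Kingman's theorem), not merely a one-sided bound, between the pathwise Birkhoff growth rate and its expectation.
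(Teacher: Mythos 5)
Your proposal is correct and follows exactly the paper's route: the paper derives this corollary by simply combining Theorem~\ref{thm:FK for Birkhoff} (the Kingman-type identity between the pathwise and expectation limits of $\frac{1}{m}\log\tau(M_m)$) with Corollary~\ref{cor:gap vs lim Birkhoff}, with no further argument. Your extra remarks on sub-additivity, non-positivity, and the possibly infinite value are sound and only make explicit what the paper leaves implicit.
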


The above results can be interpreted also as lower bounds for $\log \tau(M_m)$ in various forms. We will now develop an almost sure upper bound for $\log \tau(M_m)$ using the techniques developed in the previous sections. Taking into account \eqref{eq:Birkhoff vs tanh} the Birkhoff contraction coefficient $\tau(M_m),$ for its small values and for $M_m >0$, is equivalent to $\varphi(M_m).$ On the other hand, $\varphi(M_m)$ is a measure of collinearity of the rows of $\tau(M_m),$ see \eqref{eq:def varphi A}. Thus an upper bound for $\tau(M_m)$ provides a bound
on the speed with which $M_m$ converges to a rank-$1$ matrix.

\begin{theorem}
  \label{thm:Birkhoff le -gap a.s.}
  Assume that any of the sets of conditions of Theorems \ref{thm:ei Mnx vs ei Mnw IID}, \ref{thm:ei Mnx vs ei Mnw STATIONARY BOUNDED} or \ref{thm:ei Mnx vs ei Mnw STATIONARY GEN} is satisfied. Then we have 
  \begin{equation}
    \label{eq:Birkhoff le -gap a.s.}
    \limsup_{n \rightarrow \infty} {\frac 1 n} \log \tau(M_n) \le -(\lambda_1 - \lambda_2) \quad {\rm w.p.1.} 
  \end{equation}
\end{theorem}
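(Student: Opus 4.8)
The plan is to reduce the statement to an exponential decay estimate for $\varphi(M_n)$. By Birkhoff's formula \eqref{eq:Birkhoff vs tanh} and the elementary bound $\tanh(t)\le t$ for $t\ge 0$ we have $\tau(M_n)=\tanh(\varphi(M_n)/4)\le \varphi(M_n)/4$, so it suffices to show $\limsup_{n\to\infty}\frac1n\log\varphi(M_n)\le -(\lambda_1-\lambda_2)$ a.s. For every $n\ge\psi_1$ we have $M_n>0$ w.p.1, hence $\varphi(M_n)$ is the finite maximum $\max_{i,j,k,l}\log\big((M_n^{ik}/M_n^{jk})/(M_n^{il}/M_n^{jl})\big)$ over a fixed index set; the finitely many earlier terms do not affect the $\limsup$.

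Under each of the three sets of hypotheses Condition \ref{cond:Mnik vs Mnjk subexp} holds --- this is exactly what is verified, case by case, in the proofs of Theorems \ref{thm:ei Mnx vs ei Mnw IID}, \ref{thm:ei Mnx vs ei Mnw STATIONARY BOUNDED} and \ref{thm:ei Mnx vs ei Mnw STATIONARY GEN} --- so Lemma \ref{lem:1 over u1n} gives that $1/u_n^{i1}$ is sub-exponential for every $i$, and Lemma \ref{lem:v1cdot positive} gives $v^{1k}>0$ for every $k$. Starting from the rank-one approximation \eqref{eq:Mn SVD Rank 1}, which entrywise reads $M_n^{ik}=u_n^{i1}v^{1k}\sigma_n^1+O(e^{(\lambda_2+o(1))n})$ with $\sigma_n^1=e^{(\lambda_1+o(1))n}$, I would divide by $\sigma_n^1 v^{1k}u_n^{i1}$; this is licit because $v^{1k}$ is a fixed strictly positive random variable and $1/u_n^{i1}=e^{o(1)n}$, and because $\lambda_1-\lambda_2>0$ the resulting relative error is genuinely exponentially small, so that uniformly over the (finitely many) pairs $(i,k)$
\[
\frac{M_n^{ik}}{\sigma_n^1 v^{1k}} \;=\; u_n^{i1}\Big(1+O\big(e^{(-\lambda_1+\lambda_2+o(1))n}\big)\Big)\qquad\text{a.s.}
\]
Forming ratios yields $M_n^{ik}/M_n^{jk}=(u_n^{i1}/u_n^{j1})\big(1+O(e^{(-\lambda_1+\lambda_2+o(1))n})\big)$, so the cross ratio entering $\varphi(M_n)$ equals $1+O(e^{(-\lambda_1+\lambda_2+o(1))n})$ uniformly in $i,j,k,l$; taking logarithms and then the maximum over the finite index set gives $\varphi(M_n)=O(e^{(-\lambda_1+\lambda_2+o(1))n})$, and therefore $\tau(M_n)\le\varphi(M_n)/4=O(e^{(-\lambda_1+\lambda_2+o(1))n})$ a.s., which is \eqref{eq:Birkhoff le -gap a.s.}.

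Apart from the reduction through \eqref{eq:Birkhoff vs tanh}, this argument reuses almost verbatim the computations already carried out for Lemma \ref{lem:1 over u1n} and in the proof of Theorems \ref{thm:ei Mnx vs ei Mnw IID}--\ref{thm:ei Mnx vs ei Mnw STATIONARY GEN}, so there is no new analytic ingredient. The only delicate point --- the main obstacle, such as it is --- is keeping every $O(\cdot)$ and $o(1)$ estimate uniform over all row and column indices simultaneously; this is precisely why one needs $v^{1k}>0$ for \emph{all} $k$ (Lemma \ref{lem:v1cdot positive}) and $1/u_n^{i1}$ sub-exponential for \emph{all} $i$ (Lemma \ref{lem:1 over u1n}), rather than merely for a single pair of indices.
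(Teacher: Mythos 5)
Your proposal is correct and follows essentially the same route as the paper: both reduce \eqref{eq:Birkhoff le -gap a.s.} to showing $\varphi(M_n)=O(e^{(-\lambda_1+\lambda_2+o(1))n})$ via the rank-one approximation \eqref{eq:Mn SVD Rank 1}, the sub-exponentiality of $1/u_n^{i1}$ from Lemma \ref{lem:1 over u1n} (resting on Condition \ref{cond:Mnik vs Mnjk subexp}, verified in Lemma \ref{lem:Mnik vs Mnjk subexp}), the positivity of $v^{1\cdot}$, and Birkhoff's formula \eqref{eq:Birkhoff vs tanh}. The only cosmetic difference is that you pass through $\tanh t\le t$ while the paper keeps the exact identity $\tau(M_n)=\tanh(\varphi(M_n)/4)$; both yield the same conclusion.
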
 

\begin{proof} [Proof of Theorem \ref{thm:Birkhoff le -gap a.s.}.]
The conditions of the theorem are identical to those of Lemma \ref{lem:Mnik vs Mnjk subexp}, implying that for any pair of row indices $i,j$ and any column index $k$ the quotient ${M_{n}^{ik}}/{M_{n}^{jk}}$ is sub-exponential, and thus Condition \ref{cond:Mnik vs Mnjk subexp} is satisfied. It follows that the conditions of Lemma \ref{lem:1 over u1n} are also satisfied, implying that $1/u^{i 1}_{n}$ is sub-exponential a.s. for all $i.$

 Now, consider the equality (\ref{eq:Mn ik vs jk FNL}), developed in the course of the proof of Lemma \ref{lem:1 over u1n}. Recall that $|u^{j 1}_{n}| \le 1$ and $1 /u^{i 1}_{n}$ is sub-exponential for all $i,j.$ Hence dividing both the numerator and the denominator of \eqref{eq:Mn ik vs jk FNL} by $u^{i 1}_{n},$ we get, independently of the column index $k,$ 
\begin{equation}
\label{eq:Mn ik vs jk V2}
\frac {M_{n}^{jk}} {M_{n}^{ik}} =  \frac {u^{j 1}_{n}} {u^{i 1}_{n}} +  O(e^{(-\lambda_1 + \lambda_2 + o(1))n}) \quad {\rm a.s.}
\end{equation}
By assumption for sufficiently large (random) $n,$ the matrix $M_n$ is strictly positive, hence we can write, see \eqref{eq:def varphi A}, 	
\begin{equation}
\varphi (M_n) = \max_{i,j,k,l}  \log \left( \frac {M_{n}^{jl}} {M_{n}^{il}} \right) / \left(  \frac {M_{n}^{jk}} {M_{n}^{ik}}  \right).
\end{equation}	
Taking into account \eqref{eq:Mn ik vs jk V2}, and once again noting that $|u^{j 1}_{n}| \le 1$ and $1 /u^{i 1}_{n}$ is sub-exponential for all $i,j,$ we get a.s. 
\begin{equation*}
\varphi(M_n)  =  O (\log (1 + e^{(-\lambda_1 + \lambda_2 + o(1))n})) =   O (e^{(-\lambda_1 +\lambda_2 + o(1))n}). 
\end{equation*}
Taking into account Birkhoff's quoted theorem, stating that $ \tau(M_n) = \tanh \left( {\varphi(M_n)} /4 \right),$
we immediately get 
\begin{equation}
\tau(M_n) =  O (e^{(-\lambda_1 +\lambda_2 + o(1))n}),
\end{equation}
from which the theorem immediately follows.
\end{proof}

From the theorem above we get via a trivial rearrangement an a.s. upper bound for the spectral gap in terms of Birkhoff contraction coefficient: 
	\begin{equation}
	\label{eq:gap le -Birkhoff a.s.}
	 \lambda_1 - \lambda_2 \le  - \limsup_{n \rightarrow \infty} {\frac 1 m} \log \tau(M_m) \quad {\rm w.p.1.}
	 \end{equation}
We have seen that on the right hand side $\limsup$ can be replaced with $\lim.$ Combining the above upper bound for the gap with the lower bound obtained in Corollary \ref{cor:gap ge a.s. lim av log Birkhoff Mm} we get the following result:

\begin{theorem}
	\label{thm:gap eq Birkhoff asymp} 
	Assume that any of the sets of conditions of Theorems \ref{thm:ei Mnx vs ei Mnw IID}, \ref{thm:ei Mnx vs ei Mnw STATIONARY BOUNDED} or \ref{thm:ei Mnx vs ei Mnw STATIONARY GEN} is satisfied. Then we have  
	\begin{equation}
	\lambda_1 - \lambda_2 =  \lim_{m \rightarrow \infty} - {\frac 1 m} \log \tau(M_m) \quad {\rm w.p.1.} 
	\end{equation}
\end{theorem}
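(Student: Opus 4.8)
The plan is to obtain the identity by squeezing $-\frac{1}{m}\log\tau(M_m)$ between a lower and an upper estimate for $\lambda_1-\lambda_2$ that are already at our disposal, after first recording that the limit appearing in the statement genuinely exists, so that no distinction between $\limsup$, $\liminf$ and $\lim$ is required.

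First I would observe that the hypotheses of the present statement include those of Theorem~\ref{thm:barMnx vs barMnw}, so Theorem~\ref{thm:FK for Birkhoff} applies: the almost sure limit $\lim_{m\to\infty}\frac{1}{m}\log\tau(M_m)$ exists and equals $\lim_{m\to\infty}\frac{1}{m}\mE\log\tau(M_m)$. Since $\tau(M_m)\le 1$ we have $\log\tau(M_m)\le 0$, so this common value lies in $[-\infty,0]$ and $L:=\lim_{m\to\infty}\bigl(-\frac{1}{m}\log\tau(M_m)\bigr)$ exists in $[0,+\infty]$ w.p.1. In particular $\limsup_{n\to\infty}\frac{1}{n}\log\tau(M_n)=\liminf_{n\to\infty}\frac{1}{n}\log\tau(M_n)=-L$ w.p.1.

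Second I would combine the two bounds on the spectral gap. On one side, Theorem~\ref{thm:Birkhoff le -gap a.s.}, equivalently its rearrangement \eqref{eq:gap le -Birkhoff a.s.}, gives $\lambda_1-\lambda_2\le-\limsup_{n\to\infty}\frac{1}{n}\log\tau(M_n)=L$ w.p.1. On the other side, Corollary~\ref{cor:gap ge a.s. lim av log Birkhoff Mm} gives $\lambda_1-\lambda_2\ge\lim_{m\to\infty}\bigl(-\frac{1}{m}\log\tau(M_m)\bigr)=L$ w.p.1. Intersecting the finitely many exceptional null sets so that both inequalities and the convergence above hold simultaneously, we conclude $\lambda_1-\lambda_2=L$ w.p.1, which is the claim.

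As for difficulty, there is essentially no obstacle here: all the substance has been delegated — the upper bound to Theorem~\ref{thm:Birkhoff le -gap a.s.} (whose proof rests on the rank-one approximation \eqref{eq:Mn SVD Rank 1}, the sub-exponentiality of $1/u_n^{i1}$ from Lemma~\ref{lem:1 over u1n}, and Birkhoff's formula \eqref{eq:Birkhoff vs tanh}), and the lower bound to Corollary~\ref{cor:gap ge a.s. lim av log Birkhoff Mm} (built on Theorem~\ref{thm:gap vs Birkhoff Am A1} and Theorem~\ref{thm:FK for Birkhoff}). The only point that needs a moment's care is the passage from $\limsup$/$\liminf$ to $\lim$, which is exactly what Theorem~\ref{thm:FK for Birkhoff} licenses, together with the routine bookkeeping of asserting the various almost sure statements on a common full-measure set.
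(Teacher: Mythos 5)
Your proposal is correct and follows essentially the same route as the paper: the theorem is obtained exactly by combining the upper bound \eqref{eq:gap le -Birkhoff a.s.} from Theorem~\ref{thm:Birkhoff le -gap a.s.} with the lower bound of Corollary~\ref{cor:gap ge a.s. lim av log Birkhoff Mm}, with Theorem~\ref{thm:FK for Birkhoff} supplying the existence of the almost sure limit. Your explicit care about replacing $\limsup$/$\liminf$ by $\lim$ and intersecting the exceptional null sets is exactly the (tacit) bookkeeping the paper performs.
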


  \begin{remark}
  We note in passing that a straightforward extension of (\ref{eq:ek Mnx vs ek Mnw fnl}) yields the following: let $u, v \ge 0$ be non-zero vectors, then we have a.s. 
	\begin{equation*}
          \left (\frac{u^\top M_{n} x}{u^\top M_{n} w} \right)/
	\left (\frac{v^\top M_{n} x}{v^\top M_{n} w} \right)
	= 1 +  O(e^{(\lambda_2 -\lambda_1 + o(1))n}).
	\end{equation*}
	In the case when we take a fixed non-negative, allowable, primitive matrix $A,$ we easily get the following result: for all pairs of non-negative, non-zero vectors $(u, v),$ except for a set of Lebesgue-measure zero, we have a.s. 
	\begin{equation*}
	\label{eq:tilde x Anx vs tilde w Anw double}
	\lim_{n \rightarrow \infty} {\frac 1 n} \log \log \left (\frac{u^\top A^{n} x}{u^\top A^{n} w} \right)/
	\left (\frac{v^\top A^{n} x}{v^\top A^{n} w} \right)
	= -(\lambda_1 - \lambda_2).
	\end{equation*}
  \end{remark}

\section{Discussion and conclusion }
\label{sec:Discussion and conclusion}

We should point out that the characterization of the a.s. rate of convergence via the spectral gap  $\lambda_1 - \lambda_2$ may provide a solid ground for further investigations of direct practical interest, such as explicit estimates on the relation of spectral gap with respect to the number of nodes, the failure probabilities or the strength of connectivity, see \cite{iutzeler2013analysis} on related empirical results to this effect. 
Let us mention two simple facts that may be relevant in such investigations.

First, we note $\lambda_1 ({\cal A})$ is monotone non-decreasing in ${\cal A}$. More precisely, letting ${\cal A} = (A_n)$ and ${\cal A'} = (A'_n)$, and assuming $A_n \le A'_n$ entry-wise for all $n$ w.p.1 implies $\lambda_1({\cal A}) \le \lambda_1 ({\cal A'})$. Indeed, $ A'_n A'_{n-1}\cdots A'_1$ is entry-wise not less than $A_n A_{n-1}\cdots A_1,$ hence letting $\| B \| = \sum_{i,j} b_{ij},$ we have $\|A_n A_{n-1}\cdots A_1\|  \le \| A'_n A'_{n-1}\cdots A'_1 \|, $ implying the stated inequality. From the above observation we immediately get the following simple result: 

\begin{lemma}
Let $(A_n)$ and $(A'_n)$ be two strictly stationary, ergodic processes of matrices associated with the push-sum method on the same underlying network but with with packet loss probabilities $r_{ij} \le r_{ij}^{ \prime}$ for all $i,j.$ Then $\lambda_1({\cal A}) \ge \lambda_1 ({\cal A'})$.
\end{lemma}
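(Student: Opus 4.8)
The plan is to combine the entry-wise monotonicity of the top Lyapunov exponent noted just above with a monotone coupling of the two packet-loss processes. First I would record the observation that makes such a coupling legitimate: the quantity $\lambda_1({\cal A})$ depends only on the law of the process $(A_n)$. Indeed, by Proposition~\ref{prop:FK} it equals $\inf_n \frac1n \mE \log \|A_n A_{n-1}\cdots A_1\|$, which is a function of the finite-dimensional distributions of $(A_n)$ alone. Consequently we may realize the two models on one common probability space by \emph{any} coupling reproducing the two prescribed laws, and it suffices to produce one coupling under which the path-wise ordering $A'_n \le A_n$ holds entry-wise with probability $1$.

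Second, I would construct such a coupling. Feed a single copy of the edge schedule $(f_n)$, together with all its internal randomness, to both models; this is legitimate because the two setups share the same underlying graph $G=(V,E)$ and the same transfer fractions $\alpha_{ji}$. For the losses, draw a family of $[0,1]$-uniform random variables $U_n(f)$, independent over $n$ and over edges $f\in E$ and independent of $(f_n)$, and set $\rho_n(f) = \mathbf 1\{U_n(f) \le r_f\}$ in the first model and $\rho'_n(f) = \mathbf 1\{U_n(f) \le r'_f\}$ in the second. Since $r_f \le r'_f$ for every edge, we have $\rho_n(f) \le \rho'_n(f)$ pointwise, hence $1-\rho_n(f_n) \ge 1-\rho'_n(f_n)$. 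Inspecting the structure in \eqref{eq:def An PS with Loss}, when $f_n=(j,i)$ the matrices $A_n$ and $A'_n$ coincide in every entry except the single off-diagonal position $(j,i)$, where they equal $(1-\rho_n(f_n))\alpha_{ji}$ and $(1-\rho'_n(f_n))\alpha_{ji}$ respectively; all other entries are $0$ or $1$ and agree. Hence $A_n \ge A'_n$ entry-wise a.s. The coupled pair $(A_n,A'_n)$ is still strictly stationary and ergodic (indeed i.i.d.\ when the schedule is), and its two marginal processes have exactly the prescribed laws.

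Finally, I would invoke the entry-wise monotonicity statement preceding the lemma: since $A'_n \le A_n$ entry-wise for all $n$ w.p.1, it gives $\lambda_1({\cal A'}) \le \lambda_1({\cal A})$, which is precisely the asserted inequality $\lambda_1({\cal A}) \ge \lambda_1({\cal A'})$. There is no substantial obstacle in this argument; the only point requiring care is the first one — recognizing that $\lambda_1$ is an invariant of the law, so that replacing the independent pair of processes by the monotone coupling alters neither Lyapunov exponent — after which the conclusion is immediate from the already-established monotonicity.
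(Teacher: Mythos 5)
Your proof is correct and follows the same route the paper intends: the paper simply asserts that the lemma follows ``immediately'' from the entry-wise monotonicity of $\lambda_1$, leaving the monotone coupling implicit. You supply exactly the missing details — that $\lambda_1$ is an invariant of the law (via Proposition~\ref{prop:FK}), and a coupling via common uniforms $U_n(f)$ under which $r_f \le r'_f$ forces $A'_n \le A_n$ entry-wise — so nothing further is needed.
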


Unfortunately, the effect of increasing the packet loss probabilities on $\lambda_2$ is yet unknown. If we had $\lambda_2({\cal A}) \le \lambda_2({\cal A'})$ then we could conclude that increasing the packet loss probabilities would decrease, or at least not increase the gap. A nice observation here is that although we do not know if $\lambda_2({\cal A}) \le \lambda_2({\cal A'})$ we do know that $\sum_{i=2}^p \lambda_i({\cal A}) \le \sum_{i=2}^p \lambda_i({\cal A'}).$ The last inequality follows from a simple relationship for the sum of the Lyapunov-exponents given in the lemma below:  

\begin{lemma}
\label{lem:sum of Lyap exps}
Let $(A_n)$ be a sequence of $p \times p$ matrices satisfying the conditions of Proposition \ref{prop:FK}. Then we have 
$$
\lambda_1 + \ldots + \lambda_p = {\mE} \log ( | \det A_1 | )  
$$
In the case of the push-sum algorithm allowing packet loss we get $\lambda_1 + \ldots + \lambda_p = - \log 2.$
\end{lemma}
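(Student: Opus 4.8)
The plan is to evaluate $\frac{1}{n}\log|\det M_n|$ in two independent ways and to equate the resulting almost sure limits. On the one hand the determinant is multiplicative, so $\det M_n = \prod_{k=1}^n \det A_k$, which turns the quantity of interest into an ergodic average of $\log|\det A_k|$; on the other hand the singular value decomposition relates $|\det M_n|$ to the product of the singular values of $M_n$, hence to the partial sums of the Lyapunov exponents.

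First I would invoke the SVD $M_n = U_n\Sigma_n V_n$ from \eqref{eq:Mn SVD}. Since $U_n$ and $V_n$ are orthonormal, $|\det U_n| = |\det V_n| = 1$, so $|\det M_n| = \prod_{k=1}^p \sigma_n^k$, and consequently
\[
\frac{1}{n}\log|\det M_n| \;=\; \sum_{k=1}^p \frac{1}{n}\log\sigma_n^k \;\longrightarrow\; \sum_{k=1}^p \lambda_k \qquad\text{a.s.},
\]
using the established fact $\frac{1}{n}\log\sigma_n^k\to\lambda_k$ a.s.\ for every $k$. If some $\lambda_k=-\infty$ the right-hand sum is $-\infty$ and the convergence still holds, since the sum is bounded above by $p\cdot\frac{1}{n}\log\sigma_n^1\to p\lambda_1<\infty$ while at least one summand diverges to $-\infty$. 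Next I would use $\log|\det M_n| = \sum_{k=1}^n \log|\det A_k|$ together with the strict stationarity and ergodicity of $(\log|\det A_k|)$: because every singular value of $A_1$ is at most $\|A_1\|$ we have $|\det A_1|\le\|A_1\|^p$, hence $\mE\log^+|\det A_1|\le p\,\mE\log^+\|A_1\|<\infty$, which is exactly the one-sided integrability needed to apply Birkhoff's ergodic theorem in the form permitting the limit to be $-\infty$. This gives
\[
\frac{1}{n}\log|\det M_n| \;=\; \frac{1}{n}\sum_{k=1}^n\log|\det A_k| \;\longrightarrow\; \mE\log|\det A_1| \qquad\text{a.s.}
\]
Comparing the two displays yields $\lambda_1+\cdots+\lambda_p = \mE\log|\det A_1|$.

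For the push-sum algorithm with packet loss I would compute $\det A_1$ directly for the matrix in \eqref{eq:def An PS with Loss}. That matrix coincides with the identity outside its $i$-th column, whose only nonzero entries are $A_1^{ii}=1-\alpha_{ji}$ and $A_1^{ji}=(1-\rho_1(f_1))\alpha_{ji}$. Expanding the determinant along column $i$, the minor attached to the $(i,i)$ entry is the $(p-1)\times(p-1)$ identity and equals $1$, while the minor attached to the $(j,i)$ entry still contains the $j$-th column of the identity, which after deletion of row $j$ becomes the zero column, so that minor vanishes. Hence $\det A_1 = 1-\alpha_{ji}$, which for the push-sum value $\alpha_{ji}=1/2$ equals $1/2$ deterministically, regardless of the chosen edge and of whether a packet is lost. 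Therefore $\mE\log|\det A_1| = -\log 2$, and the first part of the proof gives $\lambda_1+\cdots+\lambda_p = -\log 2$.

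I do not anticipate a genuine obstacle: the argument is essentially bookkeeping, relying only on $\det(AB)=\det A\det B$, the orthogonal invariance of $|\det|$, and the ergodic theorem. The single point deserving a sentence of care is the use of Birkhoff's theorem when $\mE\log|\det A_1|=-\infty$ — equivalently when $\lambda_p=-\infty$ — which is covered by the standard truncation version of the ergodic theorem for functions that are integrable only from above.
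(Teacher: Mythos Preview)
Your proof is correct and follows essentially the same approach as the paper: both compute $|\det M_n|$ in two ways, once as $\prod_{k=1}^n |\det A_k|$ (handled via the ergodic theorem) and once as $\prod_{i=1}^p \sigma_n^i$ from the SVD (giving $\sum_i \lambda_i$), the paper merely phrasing the determinant identity through the $p$-fold exterior product $A\wedge\cdots\wedge A=\det A$. Your treatment is in fact more careful than the paper's, since you justify the one-sided integrability $\mE\log^+|\det A_1|\le p\,\mE\log^+\|A_1\|<\infty$ needed for Birkhoff and you explicitly handle the case $\lambda_p=-\infty$; the determinant computation for the push-sum matrix is likewise correct and agrees with the paper's assertion that $|\det A_n|=1/2$.
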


The magic of the lemma is that the l.h.s. depends only on the marginal distribution of $A_1$. 

\begin{proof} [Proof of Lemma \ref{lem:sum of Lyap exps}]
For the $p$-factor exterior product we have, 
$$
A_n \wedge \cdots \wedge A_n = \det A_n.
$$
Therefore
$$
\Pi_{k=1}^n \left( A_k \wedge \cdots \wedge A_k \right) =  \Pi_{k=1}^n \det A_k. 
$$
On the other hand, using the singular value decomposition $A_n \cdots  A_1 = U_n \Sigma_n V_n$ we can write
\begin{align*}
&\Pi_{k=1}^n \left(A_k \wedge \cdots \wedge A_k  \right) \nonumber \\
= &\Pi_{k=1}^n \left( U_k \wedge \cdots \wedge U_k \right) \cdot \Pi_{k=1}^n \left( \Sigma_k \wedge \cdots \wedge \Sigma_k\right) \nonumber \\ 
\cdot & \Pi_{k=1}^n \left( V_k \wedge   \cdots \wedge V_k \right) 
\end{align*}
Therefore 
$$
\Pi_{k=1}^n \det A_k = \pm \Pi_{k=1}^n \det \Sigma_k =  \pm \Pi_{k=1}^n \sigma^{1}_{k} \cdots \sigma^{p}_{k}.
$$
Taking absolute value and logarithm, dividing by $n,$ and the taking the limit, we get
$$
{\mE} \log ( | \det A_1 | ) =  \lambda_1 + \ldots + \lambda_p.
$$

In the case of the push-sum algorithm allowing packet loss we have $| \det A_n | = 1/2$ for all $n$ and all $\omega$, thus we get the claim. 
\end{proof}

\begin{remark}
  Setting $p=2$ the combination of the above observations give that in the case of the push-sum algorithm increasing the probabilities of packet loss will decrease the spectral gap:
\end{remark}

\begin{equation}
\lambda_1({\cal A}) -  \lambda_2({\cal A})  \ge \lambda_1 ({\cal A'}) - \lambda_2 ({\cal A'})
\end{equation}
for any strictly stationary, ergodic $2 \times 2$ matrix-valued processes $(A_n)$ and $(A'_n)$ of the form \ref{eq:def An PS with Loss}, no matter what the dependence structure is.

\begin{remark}
Finally we should note in retrospect that Theorem \ref{thm:gap for seq primitive proc} implies that the conditions $\lambda_1 - \lambda_2$ in our main results Theorems \ref{thm:ei Mnx vs ei Mnw IID}, \ref{thm:ei Mnx vs ei Mnw STATIONARY BOUNDED}, \ref{thm:ei Mnx vs ei Mnw STATIONARY GEN} and \ref{thm:ei Mnx vs ei Mnw COLUMN STOCH} can be removed, namely it is implied by the assumption that $(A_n)$ is sequentially primitive. Similarly, in the case of the push-sum algorithm, Theorem \ref{thm:ei Mnx vs ei Mnw PS}, the claim that $-(\lambda_1 - \lambda_2) < 0$ follows immediately from Theorem \ref{thm:gap for seq primitive proc}.
\end{remark}

  This observation combined with Theorem \ref{thm:barMnx vs barMnw} has the following nice implication. Let $x, w$ be probability vectors. The $x_n, w_n$ will be probability vectors for all $n,$ which can be interpreted as the distributions generated by a finite-state Markov-chain in a random, strictly stationary environment with initial distributions $x, w$.
Then the statement of Theorem \ref{thm:barMnx vs barMnw}, with the assumption $\lambda_2 < 0$ ensured by Theorem \ref{thm:gap for seq primitive proc} as a consequence of sequential primitivity, specializes to 
\begin{equation*}
  \limsup_{n \rightarrow \infty} {\frac 1 n} \log \left \Vert {x_n} - {w_n} \right \Vert_{\rm TV} = \lambda_2 < 0, 
\end{equation*}
for almost all $(x,w)$
stating a kind of exponential stability for Markov-chains in random environment. Furthermore, combining with Theorem \ref{thm:ei Mnx vs ei Mnw COLUMN STOCH} we get a rate of stability for ratios inspired by the separation distance
\begin{equation*}
  \limsup_{n \rightarrow \infty} {\frac 1 n} \log \max_i\left \vert \frac{x^i_n}{w^i_n}  - 1 \right \vert = \lambda_2 < 0,
\end{equation*}
once again for almost all $(x,w)$ initial distributions.

{\emph {Potential connections}.} We thank to our anonymous reviewers for calling our attention to papers that may be relevant to the problems discussed above, such as \cite{hadjicostis2011asynchronous}, and the follow-up paper \cite{hadjicostis2014average} developing a ratio consensus algorithm allowing arbitrary bounded delays. In fact, the results of our paper, combined with the basic ideas of \cite{hadjicostis2014average}, are directly applicable to this class of problems. Secondly, an ingenious device was proposed in \cite{silvestre2018broadcast}, using auxiliary variables to solve the average consensus problem with column stochastic matrices via a \emph{linear} asynchronous gossip algorithm, proving exponential mean square stability with an explicit upper bound for the rate. Our results seem to be applicable to prove almost sure exponential convergence, the rate of which is superior to the rate provided by \cite{silvestre2018broadcast} due to a simple convexity argument.

{\emph {Conclusion}.} The problems discussed in the paper are motivated by the ratio consensus problems and algorithms, such as the push-sum or weighted gossip algorithms. We have considered fairly general, strictly stationary  communication protocols, covering as special cases broadcast algorithms, geographic gossip, randomized path averaging or one-way averaging. We have given sharp upper bounds for the rate of almost sure exponential convergence in terms of the spectral gap of the associated matrix sequence under various technical conditions. We have presented a variety of connections between the spectral gap and the Birkhoff contraction coefficient of the product of the associated matrices. Our results significantly extend relevant results of \cite{iutzeler2013analysis}, and provide a solution to an open problem raised in \cite{benezit2010weighted}.

\section{Appendix I. Sequential primitivity}
\label{sec:App I Seq primitivity}

Lemma \ref{lem:seq prim forward backward} is a direct consequence of the lemma below, a standard device in queuing theory:

\begin{lemma}
  \label{lem:forward backward paradox} 
  Let $(\Delta_n)$ be a two-sided strictly stationary, non-negative process. Define for all $n$
  \begin{equation}
    m_n = \max_{m \le n} \{m + \Delta_m \le n  \} \qquad {\rm and} \qquad  \Delta'_n = n - m_n.
  \end{equation}
  Then the distributions of $\Delta_n$ and $\Delta'_n$ are the same for all $n.$ In particular, $\mE \Delta_n = \mE \Delta'_n.$
\end{lemma}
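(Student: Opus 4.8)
The plan is to prove the distributional identity $\Delta'_n \stackrel{d}{=} \Delta_n$; the equality of means then follows automatically (with both sides allowed to be $+\infty$). By stationarity it suffices to take $n=0$. A preliminary point is that $m_0$ is a.s.\ well defined: the set $S=\{m\le 0:\ m+\Delta_m\le 0\}$ is bounded above by $0$, and it is a.s.\ non-empty because, by stationarity, $P(\Delta_{-j}>j)=P(\Delta_0>j)\to 0$, so $-j+\Delta_{-j}\le 0$ holds for infinitely many $j\ge 1$ (for all but finitely many when $\mE\Delta_0<\infty$, by Borel--Cantelli). Since $n\mapsto m_n$ is non-decreasing, the index process $(m_n)_{n\in\mZ}$ is a.s.\ piecewise constant and stationary.

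The core of the argument is the \emph{forward--backward exchange}, the standard queueing device referred to in the statement. For each $k\ge 0$ one first writes $\{\Delta'_0=k\}=\{m_0=-k\}$ explicitly in the coordinates $\Delta_{-k},\dots,\Delta_0$, namely as $\{\Delta_{-k}\le k\}\cap\bigcap_{j=0}^{k-1}\{\Delta_{-j}>j\}$. I would then invoke the ergodic theorem in the form $P(\Delta'_0=k)=\lim_{N\to\infty}\frac1N\sum_{n=1}^N\mathbf 1\{\Delta'_n=k\}$, regroup the sum over the maximal runs on which $m_n$ is constant — on such a run $\Delta'_n$ increases by one at each step, so $\mathbf 1\{\Delta'_n=k\}$ is hit at most once per run — and match the resulting count, after the appropriate shift, against $\lim_{N\to\infty}\frac1N\sum_{m}\mathbf 1\{\Delta_m=k\}=P(\Delta_0=k)$. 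An equivalent route is to construct a measure-preserving map on path space carrying $\{\Delta'_0=k\}$ onto $\{\Delta_0=k\}$ modulo a null set (a Palm-inversion, i.e.\ a random shift by the stationary index $m_0$). I expect the bookkeeping in the regrouping step to be the main obstacle: one must verify that each occurrence is counted with the correct multiplicity and that the edge contributions near $1$ and $N$ are $O(1)$, hence vanish after division by $N$.

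It remains to deduce Lemma \ref{lem:seq prim forward backward}. Apply the above with $\Delta_m:=\psi_m-1$, a non-negative strictly stationary process (a measurable function of the shift acting on $(A_n)$). Since $A_m$ is row-allowable, the event $\{A_{m+\psi-1}\cdots A_m>0\}$ is non-decreasing in $\psi$, so $A_nA_{n-1}\cdots A_{n-\rho+1}>0$ holds iff $\psi_{n-\rho+1}\le\rho$; writing $m=n-\rho+1$ this gives $\rho_n=\min\{n-m+1:\ m\le n,\ m+\Delta_m\le n\}=n-m_n+1=\Delta'_n+1$. Hence $\rho_n\stackrel{d}{=}\Delta_n+1=\psi_n$, which yields both $\mE\rho_1=\mE\psi_1$ and the equivalence of forward and backward sequential primitivity (the events $\{\psi_1<\infty\}$ and $\{\rho_1<\infty\}$ having equal probability). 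In fact, for this particular $\Delta$ the identity $\rho_n\stackrel{d}{=}\psi_n$ can also be obtained directly, bypassing the general lemma: $P(\rho_n\le\rho)=P(A_n\cdots A_{n-\rho+1}>0)=P(A_{n+\rho-1}\cdots A_n>0)=P(\psi_n\le\rho)$ by stationarity, which is worth recording as a sanity check.
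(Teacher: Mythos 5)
Your argument for the central distributional identity is not complete, and the step you flag as ``the main obstacle'' is in fact where it breaks down. On a maximal run $[n_1,n_2]$ on which $m_n\equiv m^*$, the first value is $\Delta'_{n_1}=\Delta_{m^*}$ (since $m_{n_1-1}<m^*$ forces $m^*+\Delta_{m^*}=n_1$), after which $\Delta'$ sweeps through $\Delta_{m^*},\Delta_{m^*}+1,\dots,\Delta_{m^*}+(n_2-n_1)$; so the run contributes a $1$ to $\sum_n\mathbf 1\{\Delta'_n=k\}$ for \emph{every} $k$ in that interval, not only for $k=\Delta_{m^*}$, while indices $m$ never attained as some $m_n$ contribute to $\sum_m\mathbf 1\{\Delta_m=k\}$ but to no run. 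These occupation counts genuinely differ: take $(\Delta_m)$ i.i.d.\ with $P(\Delta_0=0)=P(\Delta_0=2)=1/2$; then $\Delta'_0$ takes the values $0,1,2$ with probabilities $1/2,1/4,1/4$, so $\mE\Delta'_0=3/4\neq 1=\mE\Delta_0$. Hence no bookkeeping can close the gap: the lemma is false for a general stationary non-negative process, and your ergodic-averaging route (which, as a secondary matter, also needs ergodicity or conditioning on the invariant $\sigma$-field, neither of which is assumed) cannot succeed as stated.

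The missing ingredient is a monotonicity that the paper's own two-line proof uses silently: it computes $P(\Delta'_n>x)=P(m_n<n-x)=P(n-x+\Delta_{n-x}>n)=P(\Delta_{n-x}>x)=P(\Delta_n>x)$, and the middle equality requires that $\{m\le n:\ m+\Delta_m\le n\}$ be a down-set (equivalently $\Delta_{m-1}\le\Delta_m+1$ for all $m$), so that $m_n\ge n-x$ is equivalent to membership of $n-x$ itself. This fails in the counterexample above but holds in the intended application, where $\Delta_m=\psi_m-1$ and multiplying a positive product on the right by an allowable matrix preserves positivity --- exactly the structure your final paragraph exploits when you note that the event is non-decreasing in $\psi$. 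Your closing ``sanity check'' $P(\rho_n\le\rho)=P(A_n\cdots A_{n-\rho+1}>0)=P(A_{n+\rho-1}\cdots A_n>0)=P(\psi_n\le\rho)$ is correct and is by itself a complete proof of Lemma~\ref{lem:seq prim forward backward}; it should be promoted from an aside to the actual argument, with the general lemma either equipped with the down-set hypothesis or dropped.
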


\begin{proof} [Proof of Lemma \ref{lem:forward backward paradox}.]
  We have for any $x \ge 0$ 
  \begin{align}
    P(\Delta'_n > x)= P(n - m_n > x)= P(m_n < n- x)\nonumber \\ 
    = P(n- x + \Delta_{n- x} > n)= P(\Delta_{n- x} > x).
  \end{align}
  Since $(\Delta_n)$ is strictly stationary we have $P(\Delta_{n- x} > x)=
  P(\Delta_{n} > x),$ as claimed.
\end{proof}

The lemma above describes an apparent paradox between forward and backward waiting times, since at any time $n$ we have $\Delta'_n  \ge \Delta_{m_n},$ and this may tempt us to believe that $\Delta'_n$ is stochastically larger than $\Delta_n,$ which would contradict to the symmetry between forward and backward.

\begin{proof} [Proof of Lemma \ref{lem:sigma n tails iid A n}] Let the elements of ${{\cal A}}$ be denoted by $B_1, B_2, \ldots ,B_r$ so that $P(A_1 = B_i) > 0$ for all $i.$ The i.i.d. sequence $(A_n)$ can be identified with an i.i.d.\ sequence of indices $i_{1},i_{2}, \ldots,$ with $1 \le i_k \le r$.
  Since $\cal A$ is primitive, there exists a word $w = (j_s,j_{s-1}, \ldots, j_1)$ such that $B_{j_s} B_{j_{s-1}} \cdots B_{j_1} >0.$
  Segment the full sequence of indices into an i.i.d. sequence of $s$-tuples $v_m.$ Let $\tau :=\min \{m: v_m = w\}.$ Since $p:=P(v_m = w)>0$ implies $P(\tau > x) = (1 - p)^x $ and $\psi_1 \le m \tau$, the claim follows. 	

\end{proof}

\section{Appendix II. Normalized products}
\label{sec:App II Normalized products Correct AZ}

In this section we present the proof of Theorem \ref{thm:barMnx vs barMnw}, starting with the proofs of the auxiliary results, Lemma \ref{lem:Oseledec for An positive and x positive} and \ref{lem:almost all x and w pairs}.

\begin{proof} [Proof of Lemma \ref{lem:Oseledec for An positive and x positive}] Consider 
	$$
	M_n^\top M_n = V_n^\top {\rm diag} (2 \sigma^{i}_n) V_n.
	$$
	For $n \ge \tau$ this is a symmetric positive semi-definite matrix with strictly positive elements. Its eigenvalues are $2 \sigma^{i}_n$ with corresponding eigenvectors $(v^{i\cdot}_n)^\top$. By the Perron-Frobenius theorem $M_n^\top M_n$ has a unique eigenvalue with maximal modulus, which is positive as is the corresponding eigenvector. It follows that $2 \sigma^{1}_{n}$ is a single eigenvalue, and $v^{1\cdot}_n > 0$ elementwise.

	Expand $x$ in the orthonormal system defined by the rows of $V_n$: 
	$x^\top = \sum \alpha^{i}_n v^{i\cdot}_n.$ Here $\alpha^{i}_n := v^{i\cdot}_n x$. Then 
	$$ 
	x^T M_n^\top M_n x =  \sum (\sigma^{i}_n)^2 (\alpha^{i}_n)^2 .
	$$
	Now, $v^{1\cdot}_n > 0$ and $\vert v^{1\cdot}_n \vert =1,$ together with $x >0$ imply that $\alpha_{1n} > \alpha_1 >0$ with some $\alpha_1.$ Thus $x^T M_n^\top M_n x >  (\sigma^{1}_{n})^2 \alpha_1^2,$ from which we get 
	$\liminf {\frac 1 n} \log \vert x^T M_n^\top M_n x \vert \ge 2 \lambda_1,$ implying $\liminf {\frac 1 n} \log \vert M_n x \vert \ge \lambda_1,$ and thus the claim of the lemma follows. 
\end{proof}

\begin{proof} [Proof of Lemma \ref{lem:almost all x and w pairs}] Write $V_1' = \mathbb R^p \wedge \mathbb R^p.$ According to Oseledec's theorem there is a proper random subspace of
	$V_1'$ of fixed dimension, say $V_2',$ such that for $z\in V_1' \setminus V_2'$
	\begin{align*}
	 \lim_{n \rightarrow \infty} & \frac{1}{n} \log \vert ((A_nA_{n-1}\cdots A_1) \wedge (A_nA_{n-1}\cdots A_1))z \vert \nonumber \\  = & \lambda_1 + \lambda_2 \quad {\rm a.s.}
	\end{align*}
	Consider the tensor product space $\mathbb R^p \otimes \mathbb R^p$ and
	its canonical linear mapping to $V_1' = \mathbb R^p \wedge \mathbb R^p$, denoted by $S,$ defined by
	$$
	\sum_{i,j} x_{ij} ~e_i \otimes e_j {\longrightarrow} \sum_{i,j}
	x_{ij} e_i \wedge e_j = \sum_{i<j} (x_{ij}-x_{ji}) e_i \wedge e_j.
	$$
	Equivalently, interpreting $\mathbb R^p \otimes \mathbb R^p$ as the linear space of matrices of size $p\times p,$ and identifying $\mathbb R^p \wedge \mathbb R^p$ as the linear space of antisymmetric matrices, the linear transformation $S$ takes the form $S(X) = X - X^\top$.

	It is readily seen that $V_2" = S^{-1} V_2'$ is a proper subspace of the tensor product space $\mathbb R^p \otimes \mathbb R^p$. Indeed, any $X \in \mathbb R^p \otimes \mathbb R^p$ can be written as $X=X_a+X_s$, as a sum of its antisymmetric
	and symmetric part, and we have $S(X) = 2 X_a$. Therefore the linear subspace $V_2" = S^{-1} V_2'$ consists of matrices for which $X_a \in V_2'$, and thus it is indeed a proper subspace.

	Let $E$ denote the random set of exceptional pairs $(x,w)(\omega)$ defined as  
	\begin{equation}
	E_{xw}(\omega) = \{(x,w): x \otimes w \in V_2"(\omega) \}
	\end{equation}
	We claim that $E_{xw}(\omega) \in \mathbb R^p \times \mathbb R^p$ has zero Lebesgue-measure for all almost all $\omega.$ Assuming the contrary, there is a set $E_x(\omega) \in \mathbb R^p$ of positive Lebesgue measure
	such that for each $x\in E_x(\omega)$ the set 
	$$
	E_{w|x}(\omega) =  \{w: (x,w) \in E_{xw}(\omega) \} 
	$$
	has positive Lebesgue-measure in $\mathbb R^p.$ Taking any $x \in E_x(\omega)$, the elements of $E_{w|x}(\omega)$ span the full $\mathbb R^p$,
	therefore $(x,w), ~w \in E_{w|x}(\omega)$ span the linear space $x \otimes \mathbb R^p$. Letting $x$ vary through the positive set $E_x(\omega)$ we get that the elements of $x \otimes \mathbb R^p$ span the whole $\mathbb R^p \times \mathbb R^p.$ 
	This is in contradiction with the assumption any for $(x,w) \in E_{xw}(\omega)$ the tensor product $x \otimes w$ lies in the proper subspace $V_2".$

	We conclude by Fubini's theorem that the exceptional set in $\mathbb R^p \times \mathbb R^p \times \Omega$ 
	\begin{equation}
	E_{xw\omega} = \{(x,w,\omega): (x,w) \in V_2"(\omega)\}
	\end{equation}
	has $\lambda \times \lambda \times P$-measure zero. Applying Fubini's theorem once again in the opposite direction we get the claim. 
\end{proof}

\begin{proof}[ Proof of Theorem \ref{thm:barMnx vs barMnw}]
	Note that since $\bar x_n$ and $\bar w_n$ belong to the simplex of probability vectors we have  
		\begin{equation*}
		\| \bar x_n - \bar w_n \|_{\rm TV} \sim \vert \sin (\bar x_n, \bar w_n)\vert =   \vert \sin (x_n, w_n)\vert = \frac {\vert  x_n \wedge  w_n \vert}   {\vert   x_n \vert   \cdot \vert  w_n \vert},
		\end{equation*}
		where $a_n \sim b_n$ means that $a_n / b_n$ and $b_n / a_n$ are bounded by a deterministic constant. After taking logarithm we get that $\log \| \bar x_n - \bar w_n \|_{\rm TV}$ can be written as 
		\begin{align}
		\log {\vert  x_n \wedge  w_n \vert} - \log \vert   x_n \vert  - \log \vert  w_n \vert + O(1), 
		\label{eq:barxn vs barwn appr by vectorial}
		\end{align}
		where $O(1)$ is bounded by a deterministic constant.

	To deal with the second and third terms of \eqref{eq:barxn vs barwn appr by vectorial} we use Lemma \ref{lem:Oseledec for An positive and x positive}, from which we get for any strictly positive initial vectors $x,w >0$ almost surely
		\begin{equation}
		\lim_{n \rightarrow \infty} {\frac 1 n} \log \vert x_n \vert = \lambda_1 \quad {\rm and } \quad 	\lim_{n \rightarrow \infty} {\frac 1 n} \log \vert w_n \vert = \lambda_1. 
		\end{equation}

	To deal with the first term of \eqref{eq:barxn vs barwn appr by vectorial} we use Lemma \ref{lem:almost all x and w pairs} implying that for all initial pairs $(x,w) \in \mathbb R^p_+ \times \mathbb R^p_+$, except for a set of Lebesgue measure zero, we have 
	\begin{equation}
	\label{eq:xn wedge wn abs}
	\lim_{n \rightarrow \infty} \frac{1}{n} \log \vert (x_n \wedge w_n) \vert = \lambda_1 + \lambda_2 \quad {\rm a.s.}
	\end{equation}
	Moreover, Oseledec's theorem implies that \emph{for all} initial pairs $(x,w) \in \mathbb R^p \times \mathbb R^p$ the left hand side of (\ref{eq:xn wedge wn abs}) exists, and it is majorized by the right hand side w.p.1. Combining these facts with (\ref{eq:barxn vs barwn appr by vectorial}) we immediately get Theorem \ref{thm:barMnx vs barMnw}.
\end{proof}

{\section{Appendix III. The push-sum algorithm}
	\label{sec:Appendix  III. The push-sum algorithm}

\medskip
\begin{proof} [Proof of Lemma \ref{lem:cal APS prim}] 	The basic idea is what is called flooding. A convenient reference is Lemma 4.2 of \cite{benezit2010weighted}, the conditions of which can be readily verified, implying that there exist an $N$ such that $p: = P(A_N \cdots A_1 > 0) >0,$ where the strict inequality is meant entry-wise. It follows that for any $m \ge 1$ we have $P(A_{mN} \cdots A_1 > 0) > 1 - (1 - p)^m,$ and the claim follows by a Borell-Cantelli argument. 

\end{proof}

	\begin{proof} [Proof of Lemma \ref{lem:gap for PS is positive}]
		The following proof relies on a combination of \cite{gerencser2018push} and Theorem \ref{thm:barMnx vs barMnw}. Note that our conditions are identical with those of \cite{gerencser2018push}, except that in there $\alpha_{ji}=1/2$ for all $(j,i) \in E$ and $w = \mathbf 1$ were assumed. It is easily seen that the analysis of Theorem 3 in \cite{gerencser2018push} carries over for general $w \ge 0, w \neq 0$ and $\alpha_{ji}\in (0,1).$ In particular, setting $s_n = \mathbf 1^\top x_n$ and $t_n = \mathbf 1^\top w_n$, we get by a straightforward extension of Theorem 3 in \cite{gerencser2018push} : 
		for any vector of initial values $x \in  \mathbb R^p,$ and a non-negative vector of initial weights $w \in \mathbb R^p_+ $ such that $w \neq 0$ we have for all $~i=1, \ldots p$ ~a.s.
		\begin{equation}
		\lim_{n \rightarrow \infty} \frac{{x}_{ni}}{{w}_{ni}} =
		\lim_{n \rightarrow \infty} \frac{s_n}{t_n} \cdot \frac{\bar{x}_{ni}}{\bar{w}_{ni}}
		= x^*  
		\end{equation}
		for some random $x^*.$ In fact the convergence is at least exponential with a deterministic
		rate: for all $~i=1, \ldots p$ ~a.s.
		\begin{equation}
		\label{eq:PUSH SUM EXP RATE}
		\frac{s_n}{t_n} \cdot \frac{\bar{x}_{ni}}{\bar{w}_{ni}} =
		x^* + O(e^{-\alpha n}).
		\end{equation}
		It follows by a simple convexity argument (see the proof of Corollary \ref{cor:q Mnx vs q Mnw}) that we also have $s_n/t_n \ra x^*$ a.s. exponentially fast with the same rate:
		\begin{equation}
		\label{eq:PUSH SUM MEAN EXP RATE}
		\frac{s_n}{t_n} = x^* + O(e^{-\alpha n}) \quad  ~{\rm a.s.}
		\end{equation}
		In addition, $x^*$ is a convex combination of the initial ratios $x_k/w_k$. It follows that choosing $x, w>0$ we will have $x^* >0.$

		Hence dividing (\ref{eq:PUSH SUM EXP RATE}) by (\ref{eq:PUSH SUM MEAN EXP RATE}) we get
		for all $~i=1, \ldots p$ ~a.s.
		\begin{equation}
		\label{eq:bar xk over bar wk }
		\frac{\bar{x}_{ni}}{\bar{w}_{ni}} = 1 + O(e^{-\alpha n}). 
		\end{equation}
		From this the exponential decay of the total variation distance of $\bar{x}_{ni}$ and $\bar{w}_{ni}$ immediately follows:
		multiplying both sides of (\ref{eq:bar xk over bar wk }) by $0<\bar{w}_{ni} 
		\le \max_i w_i,$ followed by summation over $i$ gives the almost sure asymptotics  
		\begin{equation*}
		\vert \bar{x}_{ni} - \bar{w}_{ni} \vert = O(e^{-\alpha n}) \quad \text{and}  \quad \|\bar{x}_n - \bar{w}_n \|_{\rm TV} = O(e^{-\alpha n}),
		\end{equation*}
		and hence for all strictly positive pairs $(x,w)$ we get 
		\begin{equation}
		\limsup_{n \rightarrow \infty} \frac{1}{n} \log \|\bar{x}_n - \bar{w}_n\|_{\rm TV} < 0 \quad {\rm a.s.}
		\end{equation}
		But the left hand side is equal to $-(\lambda_1 - \lambda_2)$ a.s. for Lebesgue-almost all $(x,w) \in \mathbb R^p_+ \times  \mathbb R^p_+ ~x,w \neq 0$ by Theorem \ref{thm:barMnx vs barMnw} with $\limsup$ replaced by $\lim.$ Thus 
	\end{proof}

\section{Appendix IV. ${M_{n}^{ik}}/{M_{n}^{jk}}$ is sub-exponential}
\label{sec:Appendix IV. M_n ik / M_n jk is sub-exponential}

We first provide an elementary a priori estimate of ${M_{n}^{ik}} / {M_{n}^{jk}}$ using using the following lemma, a variant of which has been stated by Bellman, see \cite{furstenberg1960products}, \cite{bellman1954limit}. 

\begin{lemma} 
	\label{lem:M=XB}
	Let $M, B, X$ be $p \times p$ matrices such that $M= BX.$ Assume that $B$ is strictly positive, and $X$ is a non-negative, allowable matrix. Then $M$ is strictly positive, and for any fixed pair of row indices $(i,j)$ and any column index $k$ we have 
	$$
	\min_r \frac {B_{ir}} {B_{jr}} \le ~\frac {M^{ik}} {M^{jk}} 
	~\le \max_r \frac {B_{ir}} {B_{jr}}
	$$  
\end{lemma}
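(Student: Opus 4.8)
The statement to prove is Lemma~\ref{lem:M=XB}: if $M = BX$ with $B>0$ strictly positive and $X$ non-negative and allowable, then $M>0$ and for any rows $i,j$ and any column $k$,
$$
\min_r \frac{B_{ir}}{B_{jr}} \le \frac{M^{ik}}{M^{jk}} \le \max_r \frac{B_{ir}}{B_{jr}}.
$$

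The plan is elementary. First I would write out $M^{ik} = \sum_r B_{ir} X_{rk}$ and $M^{jk} = \sum_r B_{jr} X_{rk}$. Since $B>0$ and $X$ is non-negative and allowable, column $k$ of $X$ is not identically zero (no zero column), so $X_{rk}\ge 0$ with at least one strictly positive entry; combined with $B_{jr}>0$ this gives $M^{jk} = \sum_r B_{jr}X_{rk} > 0$, so the quotient is well-defined, and similarly $M^{ik}>0$, giving $M>0$. Then I would use the standard "mediant/weighted-average" inequality: writing $c_r := B_{jr} X_{rk} \ge 0$ with $\sum_r c_r > 0$, we have
$$
\frac{M^{ik}}{M^{jk}} = \frac{\sum_r (B_{ir}/B_{jr})\, c_r}{\sum_r c_r},
$$
which is a convex combination of the numbers $B_{ir}/B_{jr}$ with weights $c_r/\sum_s c_s$. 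Hence it lies between $\min_r (B_{ir}/B_{jr})$ and $\max_r (B_{ir}/B_{jr})$. This is exactly the same convexity fact already invoked in the proof of Corollary~\ref{cor:q Mnx vs q Mnw}, so I would simply cite that computation.

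There is essentially no obstacle here: the only points requiring a word of care are (i) checking $M^{jk}>0$ so the ratio makes sense, for which allowability of $X$ (no zero column) together with positivity of $B$ is exactly what is needed, and (ii) noting that the weights $c_r = B_{jr}X_{rk}$ are non-negative and not all zero, so the convex-combination bound applies verbatim. The strict positivity of $M$ follows as a byproduct of the same observation. Thus the proof is a two-line application of the weighted-average inequality once the non-degeneracy of the denominator is observed.
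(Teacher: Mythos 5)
Your proof is correct and is essentially identical to the paper's: both write $M^{ik}=\sum_r B_{ir}X_{rk}$, $M^{jk}=\sum_r B_{jr}X_{rk}$ and express the ratio as a convex combination of the $B_{ir}/B_{jr}$ with weights proportional to $B_{jr}X_{rk}$. Your explicit check that allowability of $X$ makes the denominator positive (and hence $M>0$) is a point the paper leaves implicit, but the argument is the same.
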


\begin{proof} [Proof of Lemma \ref{lem:M=XB}]
	The $(i,k)$ and the $(j,k)$ element of $M$ can be expressed as
	$$
	M^{ik} = \sum_{r} B_{ir} X_{rk}  \quad {\rm and} \quad M^{jk} = \sum_{r} B_{jr} X_{rk}.
	$$
	It is easily seen that the ratio $M^{ik}/M^{jk}$, i.e.
	$$
	\frac {M_{i,k}}{M^{jk}} = \frac{\sum_{r} B_{ir} X_{rk}}{\sum_{r} B_{jr} X_{rk}}
	$$
	can be written as a convex combination of $B_{ir}/B_{jr}$ with weights 
	$$
	\mu_r = B_{jr} X_{rk}/{\sum_{s} B_{js} X_{sk}}
	$$
	which implies the claim.
\end{proof}

\begin{lemma}
  \label{lem:Mnik vs Mnjk subexp} Under any set of conditions given in Theorem \ref{thm:ei Mnx vs ei Mnw IID}, \ref{thm:ei Mnx vs ei Mnw STATIONARY BOUNDED}, \ref{thm:ei Mnx vs ei Mnw STATIONARY GEN} it holds that for any pair of row indices $i,j$ and any column index $k$ the quotient ${M_{n}^{ik}}/{M_{n}^{jk}}$ is sub-exponential. 
\end{lemma}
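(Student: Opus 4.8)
The plan is to peel off from $M_n=A_nA_{n-1}\cdots A_1$ a \emph{strictly positive} leading block and then to control the ratio of two rows of $M_n$ through that block by means of Lemma~\ref{lem:M=XB}. Concretely, I would pass to a two-sided strictly stationary, ergodic extension of $(A_n)$ and bring in the index of backward sequential primitivity $\rho_n$ from \eqref{eq:def index of backward seq prim}. Under the conditions of Theorem~\ref{thm:ei Mnx vs ei Mnw IID} the sign-pattern sequence $(\gamma(A_n))$ is i.i.d.\ with finite primitive range, so $\mE\rho_1<\infty$ by Lemma~\ref{lem:sigma n tails iid A n}; under the conditions of Theorems~\ref{thm:ei Mnx vs ei Mnw STATIONARY BOUNDED} or \ref{thm:ei Mnx vs ei Mnw STATIONARY GEN} we have $\mE\rho_1=\mE\psi_1<\infty$ by Lemma~\ref{lem:seq prim forward backward}. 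Since $\rho_n$ is identically distributed, a Borel--Cantelli argument gives $\rho_n=o(n)$ a.s.; in particular, for every fixed $\delta>0$ we have $\rho_n\le\lfloor\delta n\rfloor$ for all large $n,$ a.s., and $\rho_n<n$ eventually. For such $n$ I would write $M_n=B_nX_n$ with $B_n=A_nA_{n-1}\cdots A_{n-\rho_n+1}$ and $X_n=A_{n-\rho_n}\cdots A_1$: by the definition of $\rho_n$ the matrix $B_n$ is strictly positive, and $X_n$ is non-negative and allowable (a product of allowable matrices is allowable). Lemma~\ref{lem:M=XB} then gives, for the fixed row indices $i,j$ and column index $k,$
\begin{equation*}
\min_r\frac{(B_n)_{ir}}{(B_n)_{jr}}\le\frac{M_n^{ik}}{M_n^{jk}}\le\max_r\frac{(B_n)_{ir}}{(B_n)_{jr}}.
\end{equation*}

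Next I would estimate the entries of the $\rho_n$-fold product $B_n$: every entry $(B_n)_{ir}$ is a sum of at most $p^{\rho_n-1}$ path-products, so $(B_n)_{ir}\le p^{\rho_n-1}\prod_{l=n-\rho_n+1}^n\beta_l,$ while $(B_n)_{jr}>0$ forces the existence of at least one path with all edges positive, whence $(B_n)_{jr}\ge\prod_{l=n-\rho_n+1}^n\alpha_l.$ Substituting into the previous display gives
\begin{equation*}
\Big(p^{\rho_n-1}\prod_{l=n-\rho_n+1}^n\tfrac{\beta_l}{\alpha_l}\Big)^{-1}\le\frac{M_n^{ik}}{M_n^{jk}}\le p^{\rho_n-1}\prod_{l=n-\rho_n+1}^n\frac{\beta_l}{\alpha_l}\qquad\text{a.s.},
\end{equation*}
so it remains to prove $\tfrac1n\big((\rho_n-1)\log p+\sum_{l=n-\rho_n+1}^n(b_l-a_l)\big)\to0$ a.s., where $a_l=\log\alpha_l$ and $b_l=\log\beta_l$ (with $a_l\le b_l$ since $\alpha_l\le\beta_l$). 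The first term vanishes because $\rho_n=o(n).$ For the second, put $c_l:=b_l-a_l\ge0$ and note that $\mE c_1<\infty$ in each case: in the bounded setting $c_1\le\log(\beta/\alpha),$ and in the i.i.d.\ and the general stationary settings $\mE|\log\beta_1|<\infty$ and $\mE|\log\alpha_1|<\infty$ follow from $\mE\log^+\|A_1\|<\infty,$ Condition~\ref{cond:E log alpha n} (resp.\ Condition~\ref{cond:M MIXING}) and the elementary inequality $\alpha_1\le\beta_1.$

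The one genuinely delicate point — and the step I expect to be the main obstacle — is that the random window $[\,n-\rho_n+1,\,n\,]$ over which the $c_l$ are summed is measurable with respect to exactly the matrices $A_{n-\rho_n+1},\dots,A_n$ it runs over, so $\sum_{l=n-\rho_n+1}^n c_l$ cannot be treated as a clean sum of $o(n)$ summands. I would decouple this by domination: fixing $\delta>0,$ on the almost sure event that $\rho_n\le\lfloor\delta n\rfloor$ for all large $n,$ non-negativity of the $c_l$ gives $0\le\tfrac1n\sum_{l=n-\rho_n+1}^n c_l\le\tfrac1n\sum_{l=n-\lfloor\delta n\rfloor+1}^n c_l=\tfrac1n\sum_{l=1}^n c_l-\tfrac1n\sum_{l=1}^{n-\lfloor\delta n\rfloor}c_l,$ and the right-hand side tends a.s.\ to $\mE c_1-(1-\delta)\mE c_1=\delta\,\mE c_1$ by Birkhoff's ergodic theorem (the strong law of large numbers in the i.i.d.\ case). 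Letting $\delta\downarrow0$ forces $\tfrac1n\sum_{l=n-\rho_n+1}^n c_l\to0,$ which completes the argument. Note that only integrability of $\log\alpha_1$ and $\log\beta_1$ and the ergodic theorem enter here, not the mixing bound of Condition~\ref{cond:M MIXING} itself.
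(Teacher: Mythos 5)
Your proof is correct, and its skeleton coincides with the paper's: a two-sided stationary extension, the backward primitivity index $\rho_n$ with $\mE\rho_1<\infty$ (via Lemma~\ref{lem:sigma n tails iid A n} applied to $(\gamma(A_n))$ in the i.i.d.\ case, via Lemma~\ref{lem:seq prim forward backward} otherwise), the splitting $M_n=B_nX_n$ with $B_n=A_n\cdots A_{n-\rho_n+1}>0$ valid eventually by Borel--Cantelli, Lemma~\ref{lem:M=XB} to reduce everything to row ratios of $B_n$, and the crude entrywise bound of those ratios by $\prod(\beta_l/\alpha_l)$ over the window. Where you genuinely depart from the paper is the final sub-linearity step. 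The paper defines $\pi_n=\sum_{m=n-\rho_n+1}^n\log^+\beta'_m-\sum_{m=n-\rho_n+1}^n\log^-\alpha_m$ and proves $\mE\pi_n<\infty$ separately in each case --- by Wald's identity (using that $\rho_n$ is a stopping time for the backward i.i.d.\ process) under Theorem~\ref{thm:ei Mnx vs ei Mnw IID}, trivially under Theorem~\ref{thm:ei Mnx vs ei Mnw STATIONARY BOUNDED}, and via the maximal-function estimate of Lemma~\ref{lem:E Risk for M mixing}, which is where the $q>4$ $M$-mixing inequality enters, under Theorem~\ref{thm:ei Mnx vs ei Mnw STATIONARY GEN} --- and then converts identical distribution plus finite mean into $\pi_n=o(n)$. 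You instead dominate the random window $[n-\rho_n+1,n]$ by the deterministic window $[n-\lfloor\delta n\rfloor+1,n]$ (legitimate eventually a.s.\ because $\mE\rho_1<\infty$ gives $\sum_nP(\rho_1>\delta n)<\infty$, and because $c_l=b_l-a_l\ge0$), apply Birkhoff's ergodic theorem to get the limit $\delta\,\mE c_1$, and let $\delta\downarrow0$. This is a clean uniform argument covering all three cases at once, and it only consumes the first-moment integrability $\mE\log^+\beta_1<\infty$ and $\mE\log^-\alpha_1>-\infty$; in particular, since Condition~\ref{cond:M MIXING} is invoked nowhere else in the paper, your route shows that in Theorem~\ref{thm:ei Mnx vs ei Mnw STATIONARY GEN} the $q$-th order mixing inequality could be weakened to mere integrability of $\log\alpha_1$ (i.e.\ Condition~\ref{cond:E log alpha n}). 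What the paper's heavier machinery buys in exchange is the quantitative statement $\mE\pi_n<\infty$ itself, but that stronger conclusion is not needed for the lemma.
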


\begin{proof}[Proof of Lemma \ref{lem:Mnik vs Mnjk subexp}] 
	In order to apply Lemma \ref{lem:M=XB} let us first extend the sequence $(A_n)$ for $n \le 0,$ with eventual extension of the underlying probability space, so that we get a two-sided strictly stationary, ergodic sequence, or even i.i.d.\ sequence  in the case of Theorem \ref{thm:ei Mnx vs ei Mnw IID}. Recall the definition of the index of backward sequential primitivity: 
		$$
		\rho_n  = \min \{\rho \ge 0: A_{n} A_{n-1} \cdots A_{n-\rho+1} >0  \}.
		$$
		Note that under any set of conditions given in Theorems \ref{thm:ei Mnx vs ei Mnw IID}, \ref{thm:ei Mnx vs ei Mnw STATIONARY BOUNDED}, \ref{thm:ei Mnx vs ei Mnw STATIONARY GEN} we can claim that $\mathbb{E} \rho_n < \infty.$ Indeed, under the conditions of Theorem \ref{thm:ei Mnx vs ei Mnw IID} $\mathbb{E} \rho_n < \infty$ follows from Lemma \ref{lem:sigma n tails iid A n}. On the other hand, $\mathbb{E} \rho_n < \infty$ follows from the condition  $\mathbb{E} \psi_n < \infty,$ that was a priori assumed to hold in the case of Theorems \ref{thm:ei Mnx vs ei Mnw STATIONARY BOUNDED} and \ref{thm:ei Mnx vs ei Mnw STATIONARY GEN}, due to Lemma \ref{lem:seq prim forward backward}. Consider now the sets 
		$$
		\Omega^G_{n} = \{\omega: 
		\rho_n \le n \} \quad {\rm and} \quad \Omega^{Gc}_{n} = \Omega \setminus \Omega^G_{n}. 
		$$
		Note that $\mE \rho_n <\infty$ implies that  
		$$
		\sum_{n=1}^{\infty} P(\Omega^{Gc}_{n}) = \sum_{n=1}^{\infty} (1- P(\Omega^G_n)) = \sum_{n=1}^{\infty} P(\rho_n > n) < \infty,
		$$
		and thus $\Omega^{Gc}_{n}$ occurs finitely many times w.p.1. by the Borel-Cantelli lemma. Equivalently, the set
		\begin{equation}
		\Omega^{Gc} := \limsup_{n \rightarrow \infty} ~\Omega^{Gc}_{n} = \bigcap_{m \ge 1} \bigcup_{n \ge m} \Omega^G_{n}
		\end{equation}
		has measure $0,$ and consequently its complement  
		\begin{equation}
		\Omega^G := \liminf_n ~\Omega^{G}_{n} = \bigcup_{m \ge 1} \bigcap_{n \ge m} \Omega^G_{n} 
		\end{equation}
		has probability $1.$
		On the set $\Omega^G_{n}$ consider the following decomposition of $M_n$ by separating a strictly positive factor $B_n$ on the left: 
		\begin{equation}
		M_n = A_n A_{n-1}\cdots A_{n-\rho_n+1} \tilde{M}_n = B_n \tilde{M}_n. 
		\end{equation}

	Let $\beta'_n = \sum_{k,l} A_{n}^{kl}.$ Obviously, $\beta'_n$ is equivalent to $\beta_n = \max_{k,l} A_{n}^{kl},$ and also to $\Vert A_n \Vert,$ i.e. $\beta'_n \sim \beta_n \sim \Vert A_n \Vert.$ Then, a simple crude estimator of $\min_r {B_{n}^{ir}}/{B_{n}^{jr}}$ can be obtained on the set $\Omega^G_n,$ with $\alpha_n$ defined under (\ref{eq:def alpha n beta n}), as follows:
		\begin{equation}
		\label{eq:Bni vs Bnj LUB}
		\frac  {\Pi_{m=n-\rho_n +1}^n \alpha_m} {\Pi_{m=n-\rho_n +1}^n \beta'_m}   \le ~\frac {B_{n}^{ir}} {B_{n}^{jr}} ~\le \frac  {\Pi_{m=n-\rho_n +1}^n \beta'_m} {\Pi_{m=n-\rho_n +1}^n \alpha_m}.  
		\end{equation}
		Obviously, the lower bound is the reciprocal of the upper bound. We will estimate the latter from above. From the inequality (\ref{eq:Bni vs Bnj LUB}) we get on $\Omega^G_n$
		\begin{equation}
		~\log^+ \frac {B_{n}^{ir}} {B_{n}^{jr}} ~ \le {\sum_{m=n-\rho_n +1}^n \log^+ \beta'_m} -  {\sum_{m=n-\rho_n +1}^n \log^- \alpha_m}=: \pi_n. 
		\end{equation}
        Note that the middle term, and thus $\pi_n,$ is actually well-defined on all $\Omega$ (since $m$ can take on negative values) and obviously their distributions are independent of $n$. Thus, if we prove 
        ${\mE} \pi_n < \infty,$ it will imply that $\pi_n$ is sub-linear on $\Omega,$ yielding that ${B_{n}^{ir}} / {B_{n}^{jr}}$ is sub-exponential a.s. on $\Omega^G$ for any pair $(i,j)$ and any $r.$ This, in combination with Lemma \ref{lem:M=XB} yields the proof of Lemma \ref{lem:Mnik vs Mnjk subexp}.

        {\bf Claim}: Under any set of conditions given in Theorems \ref{thm:ei Mnx vs ei Mnw IID}, \ref{thm:ei Mnx vs ei Mnw STATIONARY BOUNDED}, \ref{thm:ei Mnx vs ei Mnw STATIONARY GEN} it holds that ${\mE} \pi_n < \infty.$

		The proof for the case of Theorem \ref{thm:ei Mnx vs ei Mnw IID}. Note that $\rho_n$ is a stopping time for the backward process with finite expectation. In addition, ${\mE} \log^+ \beta'_n < \infty.$ Moreover ${\mE} \log^- \alpha_n > -\infty,$ by Condition (\ref{cond:E log alpha n}). Since $\log^+ \beta'_n$ and $\log^- \alpha_n$ form i.i.d.\ sequences we get by Wald's theorem 
		\begin{align}
                  &\mE ~ \left( {\sum_{m=n-\rho_n +1}^n \log^+ \beta'_m} -  {\sum_{m=n-\rho_n +1}^n \log^- \alpha_m} \right) \nonumber \\
                  = ~ &\mE \rho_n \cdot  {\mE} \log^+ \beta'_1 - {\mE} \rho_n \cdot {\mE} \log^- \alpha_1 < \infty.
		\end{align}

		The proof for the case of Theorem \ref{thm:ei Mnx vs ei Mnw STATIONARY BOUNDED}, in which the positive elements of $A_n$ are assumed to be bounded from below by a positive bound and from above, is trivial: we have 
		\begin{align} 
		\label{eq:log Bir vs Bjr Thm STATIONARY BOUNDED}
                  & {\mE} ~ \left( {\sum_{m=n-\rho_n +1}^n \log^+ \beta'_m} -  {\sum_{m=n-\rho_n +1}^n \log^- \alpha_m} \right) \nonumber \\
                  \le ~ & {\mE} ~\rho_n \cdot \log^+ (p^2 \beta) - {\mE}~\rho_n \cdot {\mE} \log^- \alpha < \infty.
		\end{align}

		Finally, consider the case of Theorem \ref{thm:ei Mnx vs ei Mnw STATIONARY GEN}, in which the positive elements of $A_n$ may spread all over ${\mathbb R}_+.$ Setting $\lambda := {\mE} \log^+ \beta'_n,$ and noting that $(\log^+ \beta'_n)$ is ergodic, the random variable defined by 
		\begin{equation}
		C_n(\omega, \varepsilon) = \max_{k \ge 0} \left(\sum_{m=n-k}^n (\log^+ \beta'_m -  \lambda - \varepsilon ) \right)^+
		\end{equation}
		is finite w.p.1. for any $\varepsilon>0.$ Obviously, we have 
		\begin{equation}
		\sum_{m=n-\rho_n +1}^n \log^+ \beta'_m \le C_n(\omega, \varepsilon) +  (\lambda + \varepsilon) \rho_n.
		\end{equation}
		We can proceed with the estimation of $\sum_{m=n-\rho_n+1}^n \log^- \alpha_m$ analogously. Under the conditions of Theorem \ref{thm:ei Mnx vs ei Mnw STATIONARY GEN} we have ${\mE} \rho_n < \infty$. Obviously, $ (C_n(\omega, \varepsilon))$ is a strictly stationary sequence, therefore to complete the proof of the Claim it is sufficient to prove that ${\mE} C_n(\omega, \varepsilon) < \infty.$ This follows directly from the lemma below:

	\begin{lemma}
		\label{lem:E Risk for M mixing} 
			Let $(\xi_k), k \ge 1$ be a strictly stationary, ergodic process such that ${\mE} \xi_k =: - c < 0. $ Define 
			\begin{equation}
			\eta= \max_{m \ge 1} \left(\sum_{k=1}^m \xi_k \right)^+.
			\end{equation}
			Assume that $(\xi_k)$ is $M$-mixing of order $q$ with some $q >4.$ Then 
			${\mE} \eta < \infty.$ 
	\end{lemma}
	
	\begin{proof} [Proof of Lemma \ref{lem:E Risk for M mixing}]
		For any $x \ge 0$ we have 
		\begin{align}
		\label{eq:P eta ge x}
		 P(\eta \ge x) \le & \sum_{m=1}^{\infty} P\left( \sum_{k=1}^m \xi_k  \ge x\right)  \nonumber \\ = & \sum_{m=1}^{\infty} P\left( \sum_{k=1}^m (\xi_k + c)  \ge x + mc \right).
		\end{align}
		The $m$-th term on the r.h.s. can be bounded from above by using Markov's inequality for the $q$-th absolute moment and the condition that $(\xi_k)$ is $M$-mixing of order $q$ as follows:
		\begin{align}
		\label{eq:P eta ge x bound term m}
		  \frac {C_q m^{q /2}} {(x + mc)^{q}}  = & \frac {C_q m^{q /2}} {c^q~(x/c + m)^{q}}  \le \frac {C_q ~(x/c + m)^{q /2}} {c^q~(x/c + m)^{q}} \nonumber \\ 
		= & \frac {C_q} {c^q ~(x/c + m)^{q/2}}
		\end{align}
		with some $q > 4.$ Thus the sum over $m$ on the r.h.s. of \eqref{eq:P eta ge x} can be majorized, by noting that the right hand sides of \eqref{eq:P eta ge x bound term m} are monotone decreasing, as follows: 
		\begin{align}
		& \sum_{m=1}^{\infty} \frac {C_q} {c^q ~(x/c + m)^{q/2}} \le \int_0^{\infty}\frac {C_q} {c^q~(x/c + t)^{q/2}} dt \nonumber \\ = &\int_{x/c}^{\infty} ~\frac {C_q} {c^q ~t^{q/2}} dt=  {\frac {C_q} {c^q ~(-q/2 +1)}  } \left(\frac x c \right)^{-q/2 + 1}.
		\end{align}
		Summing through the positive integers $x=n,$ and recalling that $q >4,$ we conclude that 
		\begin{equation}
		\label{eq:sum P eta ge n}
		\sum_{n=1}^{\infty} P(\eta \ge n) \le \sum_{n=1}^{\infty} {\frac {C_q} {c^q (-q/2 +1)}  } \left(\frac n c \right)^{-q/2 + 1} < \infty,
		\end{equation}
		hence ${\mE} \eta < \infty,$ as stated in the lemma.
	\end{proof}
	
		It follows immediately, that the process 
		\begin{equation}
		\eta_n =  \max_{m \ge n} \left(\sum_{k=n}^m \xi_i \right)^+.
		\end{equation}
		is sub-linear. If $(\xi_i)$ is a two-sided process the same argument applies for the time-reversed process
		\begin{equation}
		\eta^r_n :=  \max_{m \le n} \left(\sum_{k=m}^n \xi_i \right)^+.
		\end{equation}
		With this the proof of Lemma \ref{lem:Mnik vs Mnjk subexp} is complete.
\end{proof}

{\bf Acknowledgment.} The first author expresses his thanks to Asuman Ozdaglar for encouraging him to study the push-sum algorithm with packet loss, and to Julien M. Hendrickx for an ongoing inspiring collaboration in consensus problems in general.

\bibliographystyle{ieeetr}
\bibliography{PushSum}

\end{document}